\title{Generalizations of Fourier analysis, and how to apply them}
\author{W.T. Gowers}
\begin{document}

\newtheorem{theorem}{Theorem}[section]
\newtheorem{proposition}[theorem]{Proposition}
\newtheorem{lemma}[theorem]{Lemma}
\newtheorem*{claim}{Claim}
\newtheorem{corollary}[theorem]{Corollary}
\newtheorem{conjecture}[theorem]{Conjecture}
\newtheorem{definition}[theorem]{Definition}
\newtheorem{problem}[theorem]{Problem}
\newtheorem{example}[theorem]{Example}
\newtheorem{question}[theorem]{Question}
\newtheorem{remark}[theorem]{Remark}

\def\e{\epsilon}
\def\a{\alpha}
\def\b{\beta}
\def\g{\gamma}
\def\d{\delta}
\def\cf{\mathcal{F}}
\def\E{\mathbb{E}}
\def\N{\mathbb{N}}
\def\R{\mathbb{R}}
\def\C{\mathbb{C}}
\def\Z{\mathbb{Z}}
\def\F{\mathbb{F}}
\def\hf{\hat f}
\def\T{\mathbb{T}}
\def\tr{\mathrm{tr}}

\onehalfspacing

\maketitle

\begin{abstract}
This is a survey of the use of Fourier analysis in additive combinatorics, with a particular focus on situations where it cannot be straightforwardly applied, but needs to be generalized first. Sometimes very satisfactory generalizations exist, while sometimes we have to make do with theories that have some of the desirable properties of Fourier analysis but not all of them. In the latter case, there are intriguing hints that there may be more satisfactory theories yet to be discovered. This article grew out of the Colloquium Lectures at the Joint Meeting of the AMS and the MAA, given in Seattle in January 2016.
\end{abstract}

\section{Introduction: What is additive combinatorics?}

Additive combinatorics is a newish and very active branch of mathematics that grew out of combinatorial number theory, with input from many other areas such as harmonic analysis, ergodic theory, analytic number theory, group theory, and extremal combinatorics. It has since fed back into those areas and led to the solutions of several long-standing open problems. Because of all these connections and influences, the subject is not very easy to characterize, but a good way to understand the flavour of the area is to look at one of its central theorems, the following famous result of Szemer\'edi from 1974 \cite{szem75}, which solved a conjecture made by Erd\H os and Tur\'an in 1936.

\begin{theorem} \label{szem} 
For every positive integer $k$ and every $\d>0$ there exists a positive integer $n$ such that every subset $A\subset\{1,2,\dots,n\}$ of size at least $\d n$ contains an arithmetic progression of length $k$.
\end{theorem}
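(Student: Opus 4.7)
The plan is to prove this by a density increment argument driven by higher-order Fourier analysis (the Gowers $U^{k-1}$ uniformity norms). The approach generalizes Roth's Fourier-analytic treatment of $k=3$, where a set lacking 3-APs must have a large nontrivial Fourier coefficient, which in turn localizes the set onto a sub-progression of boosted density. I would embed $\{1,\dots,n\}$ into $\Z/N\Z$ for $N$ slightly larger than $n$ so that Fourier-analytic machinery is available, and set $f = 1_A - \d$.

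First I would carry out the \emph{counting step}: a generalized von Neumann theorem shows that the normalized count $\E_{x,d}\, 1_A(x)\,1_A(x+d)\cdots 1_A(x+(k-1)d)$ differs from $\d^k$ by at most $\|f\|_{U^{k-1}}$ (up to constants). Consequently, if $\|f\|_{U^{k-1}}$ is smaller than some threshold $\eta(\d)$, then $A$ contains the expected number $\gtrsim \d^k N^2$ of length-$k$ APs, and for $n$ large enough the nontrivial (non-constant) ones dominate. So in this regime we are done.

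The interesting case is $\|f\|_{U^{k-1}} \geq \eta(\d)$, and here the heart of the argument is an \emph{inverse theorem} for the $U^{k-1}$ norm: largeness of this norm forces $f$ to correlate nontrivially with a structured object---a polynomial phase function in the finite-field model, or a $(k-2)$-step nilsequence in the integer setting. Proving this inverse theorem for $k\geq 4$ is by far the hardest step; in the integers it is the deep Green--Tao--Ziegler result, and I would expect this to be the main obstacle with no elementary shortcut.

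Finally, granted the inverse theorem, a pigeonhole/partitioning argument divides $\{1,\dots,n\}$ into sufficiently long arithmetic progressions on each of which the structured correlating function is almost constant, so that correlation of $f$ with it translates into a density increment $\d \mapsto \d + c(\d,k)$ on at least one such sub-progression. Iterating the whole argument on that sub-progression must terminate after at most $O(c(\d,k)^{-1})$ steps since densities are bounded by $1$, and choosing $n$ large enough as a function of $\d$ and $k$ at the outset guarantees that the sub-progressions remain long throughout the iteration, yielding a genuine arithmetic progression of length $k$ inside $A$.
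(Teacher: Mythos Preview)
The paper does not give its own proof of Theorem~\ref{szem}: it is stated in the introduction as Szemer\'edi's 1974 result with a citation to \cite{szem75}, and the survey then goes on to discuss in Sections~3 and 8--11 how one \emph{would} prove it via the density-increment strategy driven by the $U^{k-1}$ norms. Your outline is a faithful summary of that strategy and is essentially correct, so in that sense it matches the paper.

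That said, there is one point of divergence worth flagging. You propose to run the inverse step through the full Green--Tao--Ziegler inverse theorem (Theorem~\ref{inversetheorem} in the paper), obtaining correlation with a $(k-2)$-step nilsequence and then partitioning into subprogressions on which the nilsequence is nearly constant. The paper, in Section~10, stresses that Gowers's original proof does \emph{not} need this: one instead uses the weaker ``local'' statement (Lemma~\ref{partition} and its higher-degree analogues), which gives correlation with genuine polynomial phases on a partition into subprogressions, and from there one steps down in degree until one is back in the linear/Roth situation. The advantage of the paper's route is that it is fully quantitative, whereas the Green--Tao--Ziegler inverse theorem is currently ineffective; the advantage of your route is that it is conceptually cleaner once one grants the inverse theorem. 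Also note that your final ``pigeonhole/partitioning'' step for nilsequences is not as innocent as you make it sound: turning correlation with a nilsequence into a density increment on a long subprogression requires the quantitative equidistribution theory for polynomial orbits on nilmanifolds developed by Green and Tao, not just a soft pigeonhole argument.
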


This is a combinatorial theorem in the sense that we make no structural assumptions about $A$ -- it is just a subset of $\{1,2,\dots,n\}$ of density at least $\d$. However, the set $\{1,2,\dots,n\}$ has a rich additive structure, and that structure is highly relevant to the problem, since an arithmetic progression can be thought of as a sequence $(x_1,x_2,\dots,x_k)$ such that 
\[x_2-x_1=x_3-x_3=\dots=x_k-x_{k-1}.\]
(Of course, we also need to add the non-degeneracy condition that $x_1\ne x_2$.) 

However, there is more to additive combinatorics than a set of combinatorial theorems that involve addition in one way or another. To appreciate this, it is helpful to look at the following statement, which turns out to be an equivalent reformulation of Szemer\'edi's theorem. The equivalence is a reasonably straightforward exercise to prove.

\begin{theorem} For every positive integer $k$ and every $\d>0$ there exists a constant $c>0$ such that for every positive integer $n$ and every function $f:\Z_n\to[0,1]$ that averages at least $\d$ we have the inequality
\[\E_{x,d}f(x)f(x+d)\dots f(x+(k-1)d)\geq c.\]
\end{theorem}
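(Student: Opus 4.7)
The plan is to deduce the averaging inequality directly from Szemer\'edi's theorem (Theorem~\ref{szem}) by two reductions: first passing from $[0,1]$-valued functions to indicator functions of dense sets, and then converting the single AP guaranteed by Theorem~\ref{szem} into a quantitative count, via a supersaturation argument on random length-$N$ sub-APs of $\Z_n$.

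The first reduction is to $f=\mathbf{1}_A$. Given $f:\Z_n\to[0,1]$ with $\E f\geq\d$, set $A=\{x:f(x)\geq\d/2\}$. Since $f\leq 1$, the chain $\d\leq\E f\leq|A|/n+(\d/2)(1-|A|/n)$ forces $|A|\geq(\d/2)n$; and $f\geq(\d/2)\mathbf{1}_A$ pointwise implies
\[\E_{x,d}\prod_{i=0}^{k-1}f(x+id)\;\geq\;(\d/2)^k\,\E_{x,d}\prod_{i=0}^{k-1}\mathbf{1}_A(x+id).\]
So it is enough to show that for any $A\subset\Z_n$ of density at least $\d/2$, the number of $(x,d)\in\Z_n\times\Z_n$ with $x,x+d,\dots,x+(k-1)d\in A$ is at least $c'(\d,k)n^2$.

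For the count, apply Theorem~\ref{szem} with parameters $(k,\d/4)$ to obtain $N=N(\d,k)$ such that every subset of $\{1,\dots,N\}$ of size at least $(\d/4)N$ contains a nontrivial length-$k$ AP. For $n<2N$, the diagonal contribution $d=0$ alone gives $\E_{x,d}\prod_i f(x+id)\geq\d^k/n\geq\d^k/(2N)$ by Jensen, so assume $n\geq 2N$. Pick $(a,s)$ with $a\in\Z_n$ and $s\in\{1,\dots,\lfloor n/(2N)\rfloor\}$ uniformly; then $P_{a,s}=\{a+is:0\leq i<N\}$ consists of $N$ distinct elements of $\Z_n$ because $is<n/2$ for $i<N$. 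Averaging over $a$ for fixed $s$ gives $\E_a|A\cap P_{a,s}|/N=|A|/n\geq\d/2$, so Markov's inequality provides a $\d/4$-fraction of pairs $(a,s)$ with $|A\cap P_{a,s}|\geq(\d/4)N$. For each such good pair, applying Theorem~\ref{szem} to $\{i:a+is\in A\}\subset\{0,\dots,N-1\}$ produces a nontrivial length-$k$ AP inside $A\cap P_{a,s}$, whose common difference $ts$ with $t\in\{1,\dots,N-1\}$ remains nonzero in $\Z_n$ by the same injectivity bound. Finally, each nontrivial $k$-AP in $A$ with common difference $d$ (lifted to $\{1,\dots,n-1\}$, which we may assume is less than $n/2$ by reversal) arises from at most $N-1$ pairs $(s,t)$ with $ts=d$, so the total AP count is at least $c(\d,k)n^2$.

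The main obstacle is the bookkeeping in the passage from $\{1,\dots,N\}$ to the cyclic group $\Z_n$: one must ensure that $P_{a,s}$ is a genuine length-$N$ progression (no wrap-around or coincidences), that the $k$-AP inside it remains nontrivial in $\Z_n$, and that the final overcounting factor depends only on $N$ and $k$. All three are handled uniformly by restricting $s$ to the short initial segment $\{1,\dots,\lfloor n/(2N)\rfloor\}$, at the cost of an extra factor polynomial in $N$. Everything else is soft, and the resulting $c(\d,k)$ inherits from Szemer\'edi its (currently tower-type) dependence on $1/\d$.
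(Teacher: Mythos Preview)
The paper does not actually prove this theorem; it merely states that the equivalence with Theorem~\ref{szem} ``is a reasonably straightforward exercise to prove.'' Your argument supplies exactly such an exercise, and the two reductions you carry out---passing to a level set to replace $f$ by an indicator, then running a supersaturation argument over short sub-progressions $P_{a,s}$---are the standard route by which this equivalence is established. So there is nothing to compare against, and your approach is correct in outline and in almost every detail.

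There is one small bookkeeping slip in the overcounting step. You bound the number of pairs $(s,t)$ with $ts=d$ by $N-1$, but you also need to account for the freedom in the starting position: once $(s,t)$ is fixed, the same $k$-AP $(x,x+d,\dots,x+(k-1)d)$ can sit inside $P_{a,s}$ for up to $N$ different choices of $a$, namely $a=x-js$ for each admissible $j\in\{0,\dots,N-1\}$. So the true overcounting factor is $O(N^2)$ rather than $O(N)$. This does not affect the conclusion at all---you still obtain at least $c(\d,k)n^2$ nontrivial $k$-APs, just with an extra factor of $N$ absorbed into the constant---but the sentence as written undercounts the multiplicity.
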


Here $\Z_n$ is the cyclic group of order $n$ and the notation $\E_{x,d}$ means the average over all $x$ and $d$ -- that is, it is another way of writing $n^{-2}\sum_{x,d}$. 

As $n$ gets large, $\Z_n$ is a better and better discrete approximation to the circle $\T$, which we can think of as the Abelian group consisting of all complex numbers of modulus 1. It is not hard to prove that the discrete statement is equivalent to the following continuous version.

\begin{theorem} For every positive integer $k$ and every $\d>0$ there exists a constant $c>0$ such that for every measurable function $f:\T\to[0,1]$ that averages at least $\d$ we have the inequality
\[\E_{x,d} f(x)f(x+d)\dots f(x+(k-1)d)\geq c.\]
\end{theorem}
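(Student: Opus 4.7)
The plan is to deduce the continuous statement from the preceding discrete version on $\Z_n$ by sampling and passing to the limit $n \to \infty$. Given a measurable $f: \T \to [0,1]$ with $\int f \geq \d$, I would first approximate $f$ in $L^1$ by a continuous function $g: \T \to [0,1]$ with $\|f-g\|_{L^1} \leq \e$ and $\int g \geq \d - \e$, which exists by density of continuous functions in $L^1(\T)$ (e.g.\ via Lusin's theorem). A short telescoping argument, using that $f$ and $g$ are bounded by $1$, shows that the multilinear form
\[
\Lambda_k(h) := \E_{x,d}\, h(x)h(x+d)\cdots h(x+(k-1)d)
\]
satisfies $|\Lambda_k(f) - \Lambda_k(g)| \leq k\|f-g\|_{L^1} \leq k\e$, so it suffices to bound $\Lambda_k(g)$ from below.

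For the continuous $g$, I would define $g_n: \Z_n \to [0,1]$ by $g_n(j) := g(j/n)$. Since $g$ is continuous, $\E_{j \in \Z_n} g_n(j)$ is a Riemann sum for $\int g$ and converges to it as $n \to \infty$; more crucially, the corresponding form on $\Z_n$,
\[
\Lambda_k^{(n)}(g_n) := \E_{j,d' \in \Z_n}\, g_n(j)g_n(j+d')\cdots g_n(j+(k-1)d'),
\]
is a Riemann sum for $\Lambda_k(g)$, because the integrand $(x,d) \mapsto \prod_i g(x+id)$ is continuous on the compact torus $\T \times \T$ and hence uniformly continuous. Picking $n$ large enough that $\E g_n \geq (\d-\e)/2$, the discrete theorem applies and yields $\Lambda_k^{(n)}(g_n) \geq c(k,(\d-\e)/2)$; letting $n \to \infty$ gives $\Lambda_k(g) \geq c(k,(\d-\e)/2)$.

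Combining, $\Lambda_k(f) \geq c(k,(\d-\e)/2) - k\e$. Choosing $\e := \min\bigl(\d/2,\, c(k,\d/4)/(2k)\bigr)$ makes this at least $c(k,\d/4)/2 > 0$ (using monotonicity of $c$ in its second argument), providing the desired constant depending only on $k$ and $\d$. There is no substantive obstacle in this argument; it is purely an $\e$-calculus reduction of continuous Szemer\'edi to discrete Szemer\'edi. The only real care required is the bookkeeping that the Lusin error $k\e$ does not swallow the lower bound obtained from the discrete theorem --- which is why we pick $\e$ explicitly in terms of $c(k,\d/4)$ rather than pushing $\e \to 0$ blindly.
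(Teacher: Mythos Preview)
Your argument is correct. The paper does not actually give a proof of this statement: it merely remarks that ``it is not hard to prove that the discrete statement is equivalent to the following continuous version,'' and your approximation-by-continuous-functions plus Riemann-sum sampling is exactly the standard way to flesh out that remark.
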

\noindent This time $\E_{x,d}$ stands for the integral with respect to the Haar measure on $\T^2$. 

This last reformulation illustrates an important point about many of the theorems of additive combinatorics (and extremal combinatorics more generally), which is that although they are combinatorial, they are also analytic. In fact, the more one thinks about them, the less important the distinction between discrete and continuous seems to be. And it is not just the statements that are (or can be made to be) analytic: a characteristic feature of much of additive combinatorics is that the proofs of its theorems use methods from areas of analysis such as functional analysis, Fourier analysis, and ergodic theory. 

Here we shall focus on the second of these. Fourier analysis is an extremely useful tool for additive problems, and one of the aims of this survey will be to explain why. Another aim, which is in some ways even more interesting, will be to demonstrate the \emph{limitations} of Fourier analysis -- that is, to look at problems that do not immediately yield to a Fourier-analytic approach. Sometimes that just means that one needs to look for a completely different kind of argument. However, with some problems the best way to make progress is not to abandon Fourier analysis altogether, but to generalize it in a suitable, and not always obvious, way. Thus, it sometimes happens that the limitations of one type of Fourier analysis lead to the development of another.

\section{Discrete Fourier analysis}

Let $f:\Z_n\to\C$. We define its \emph{discrete Fourier transform} $\hat f:\Z_n\to\C$ by the formula
\[\hat f(r)=\E_xf(x)\omega^{-rx},\]
where $\omega=\exp(2\pi ix/n)$ is a primitive $n$th root of unity. Note that there is a close resemblance between this formula, which we could equally well write as
\[\hat f(r)=\E_x f(x)\exp(-2\pi i rx/n),\]
and the familiar formulae for Fourier coefficients and Fourier transforms in the continuous setting. Of course, this is to be expected. Note also that the number $\omega^{-rx}$ is well-defined, since if $r$ and $n$ are integers, then adding a multiple of $n$ to either of them makes no difference to it.

Although $\hat f$ can be thought of as a function defined on $\Z_n$, it is more correct to regard it as defined on the dual group $\hat\Z_n$, which happens to be (non-naturally) isomorphic to $\Z_n$. The distinction has some importance in additive combinatorics, because the natural measures we put on $\Z_n$ and $\hat\Z_n$ are different: for $\Z_n$ we use the uniform probability measure, whereas for $\hat\Z_n$ we use the counting measure. This difference feeds into the definitions of some key concepts such as inner products, $p$-norms and convolutions. Given functions $f,g:\Z_n\to\C$ and $1\leq p\leq\infty$, we have the following definitions.
\begin{itemize}
\item $\langle f,g\rangle = \E_xf(x)\overline{g(x)}$.
\item $\|f\|_p=(\E_x|f(x)|^p)^{1/p}$.
\item $f*g(x)=\E_{y+z=x}f(y)g(z)$.
\end{itemize}
The corresponding definitions for functions $\hat f,\hat g:\hat\Z_n\to\C$ are the same, but with sums replacing averages. That is, they are as follows.
\begin{itemize}
\item $\langle \hat f,\hat g\rangle = \sum_x\hat f(x)\overline{\hat g(x)}$.
\item $\|\hat f\|_p=(\sum_x|\hat f(x)|^p)^{1/p}$.
\item $\hat f*\hat g(x)=\sum_{y+z=x}\hat f(y)\hat g(z)$.
\end{itemize}
With these measures in place, the familiar properties of the Fourier transform hold for the discrete Fourier transform as well, and have easier proofs. In particular, constant use is made of the following five rules, of which the first two are equivalent. All five are easy exercises.
\begin{itemize}
\item $\langle f,g\rangle = \langle \hat f,\hat g\rangle$ (Parseval's identity).
\item $\|f\|_2=\|\hat f\|_2$ (also Parseval's identity).
\item $f(x)=\sum_r\hat f(r)\omega^{rx}$ (the inversion formula).
\item $\widehat{f*g}(r)=\hat f(r)\hat g(r)$ (the convolution identity).
\item If $a$ is invertible mod $n$ and $g(x)=f(ax)$ for every $x\in\Z_n$, then $\hat g(r)=\hat f(a^{-1}r)$ for every $r$ (the dilation rule).
\end{itemize}
In additive combinatorics, one often deals with characteristic functions of subsets $A$ of $\Z_n$, and some authors like to use the letter $A$ for its own characteristic function: that is, $A(x)=1$ if $x\in A$ and 0 otherwise. Given a subset $A\subset\Z_n$, define its \emph{density} to be $|A|/n$. The following three observations are often used.
\begin{itemize}
\item $\hat A(0)=\a$.
\item $\sum_r|\hat A(r)|^2=\a$.
\item $\hat A(-r)=\overline{\hat A(r)}$.
\end{itemize}
The first observation is immediate from the definition, the second follows from Parseval's identity and the fact that $\sum_r|\hat A(r)|^2=\|\hat A\|_2^2$, and the third follows from the fact that $A$ is real-valued and that $\omega^{rx}=\overline{\omega^{-rx}}$ for every $r$ and $x$ (and so is true of all real-valued functions). 

\section{Roth's theorem}

To give an idea of how useful these simple facts are, we shall now sketch a proof of Roth's theorem, which is the case $k=3$ of Szemer\'edi's theorem (Theorem \ref{szem} of these notes). Thus, we would like to prove the following theorem. It was proved by Roth in 1953 \cite{roth53}.

\begin{theorem}
For every $\d>0$ there exists $n$ such that every subset $A\subset\{1,2,\dots,n\}$ of density at least $\d$ contains an arithmetic progression of length 3.
\end{theorem}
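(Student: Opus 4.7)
My plan is to use the standard density-increment argument, leveraging the Fourier-analytic identities listed above. First I would reduce to the cyclic-group setting: given $A\subset\{1,\dots,n\}$ of density $\a$, embed $A$ inside $\Z_N$ for some prime $N$ roughly $2n$ so that three-term arithmetic progressions in $\Z_N$ supported in the first half correspond to genuine arithmetic progressions in $\{1,\dots,n\}$. It then suffices to show that any $A\subset\Z_N$ of density $\a$ contains many solutions to $x+z=2y$, or else that $A$ has significantly higher density on some long sub-progression.

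The heart of the argument is the Fourier expansion of the 3-AP count. Writing $A$ for its own characteristic function and applying the inversion formula three times, one finds
\[\E_{x,d}A(x)A(x+d)A(x+2d)=\sum_r\hat A(r)^2\hat A(-2r),\]
where $2$ is invertible since $N$ is an odd prime. The $r=0$ term contributes $\a^3$, the main term. The sum of the remaining terms is bounded in absolute value by $\max_{r\ne 0}|\hat A(r)|$ times $\sum_r|\hat A(r)|^2=\a$, using Parseval. So if $\max_{r\ne 0}|\hat A(r)|\leq\a^2/2$, then the count is at least $\a^3/2$, which for $N$ sufficiently large in terms of $\a$ exceeds the contribution from trivial progressions ($d=0$), forcing a genuine 3-AP in $A$.

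The remaining case is that some $r\ne 0$ has $|\hat A(r)|\geq\a^2/2$, and the task is to convert this into a density increment. Using a Dirichlet-type pigeonhole argument, partition $\Z_N$ into arithmetic progressions $P_1,\dots,P_m$ of length roughly $\sqrt N$ on each of which the character $x\mapsto\omega^{rx}$ varies by very little, so that $\omega^{rx}$ is essentially constant on each $P_i$. Since $\E_x(A(x)-\a)\omega^{-rx}=\hat A(r)$ has absolute value at least $\a^2/2$, a short calculation shows that on some $P_i$ the average of $A-\a$ must be at least $c\a^2$ for an absolute constant $c>0$, i.e.\ $A$ has density at least $\a+c\a^2$ on a progression of length roughly $\sqrt N$. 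Rescaling that progression to $\{1,\dots,|P_i|\}$ returns us to the original problem with a larger density.

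Iterating the dichotomy, the density can rise from $\a$ to $2\a$ after $O(1/\a)$ steps, then from $2\a$ to $4\a$ after $O(1/\a)$ further steps, and so on; the density is capped at $1$, so after $O(1/\a)$ total steps we must be in the first case and find a 3-AP. Since each step replaces the ambient length $n$ with something like $\sqrt n$, the iteration terminates provided $n$ is at least a tower-free but double-exponential function of $1/\a$, giving Roth's bound. The main obstacle I expect is the density-increment step, which is the only place where the cleanness of pure Fourier manipulation breaks down: one has to pass from a statement about frequencies on $\Z_N$ (the character $\omega^{rx}$ correlating with $A$) to a statement about \emph{physical} arithmetic progressions in $\Z$, and it is the pigeonhole partition into progressions on which the character is nearly constant that does this translation.
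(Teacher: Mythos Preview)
Your proposal is correct and follows essentially the same density-increment strategy as the paper: express the 3-AP count via the Fourier identity $\sum_r\hat A(r)^2\hat A(-2r)$, split off the main term $\a^3$, run the dichotomy (small non-trivial Fourier coefficients give many progressions; a large coefficient gives a density increment on a long subprogression via a pigeonhole partition on which the relevant character is nearly constant), and iterate. The only cosmetic differences are that you bound the tail sum directly by $\max_{r\ne 0}|\hat A(r)|\cdot\sum_r|\hat A(r)|^2$ rather than via Cauchy--Schwarz as the paper does, and you handle the passage between $\{1,\dots,n\}$ and $\Z_N$ more explicitly.
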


In order to apply Fourier analysis, it is convenient to think of $A$ as a subset of $\Z_n$ rather than of $\{1,2,\dots,n\}$. (This is not essential, however: Roth originally treated $A$ as a subset of $\Z$.) We shall also assume that $n$ is odd. Let us write $A_2$ for the function defined by $A_2(z)=A(z/2)$, which is the characteristic function of the set of $z$ such that $z/2\in A$. Because $n$ is odd, the map $z\mapsto z/2$ is a well-defined bijection. 

The key observation that shows why Fourier analysis is useful is that the number of arithmetic progressions in $A$ can be expressed in terms of convolutions, inner products and dilations, and therefore has a neat expression in terms of the Fourier coefficients of $A$. Indeed, using the rules given earlier, we have that
\begin{eqnarray*}
\E_{x+y=2z}A(x)A(y)A(z)&=&\E_{x+y=z}A(x)A(y)A(z/2)\\
&=&\E_z A*A(z)A_2(z)\\
&=&\langle A*A,A_2\rangle\\
&=&\langle\widehat{A*A},\hat A_2\rangle \\
&=&\langle\hat A^2,\hat A_2\rangle \\
&=&\sum_r\hat A(r)^2\overline{\hat A_2(r)}\\
&=&\sum_r\hat A(r)^2\overline{\hat A(2r)} \\
&=&\sum_r\hat A(r)^2\hat A(-2r).\\
\end{eqnarray*}

Why should this be useful? To answer that question, we need to bring in another simple but surprisingly powerful tool: the Cauchy-Schwarz inequality. First, recalling that $\hat A(0)$ is equal to the density of $A$, which we shall denote by $\a$, we split the last expression up as
\[\a^3+\sum_{r\ne 0}\hat A(r)^2\hat A(-2r).\]
Thus, we have shown that
\[\E_{x+y=2z}A(x)A(y)A(z)=\a^3+\sum_{r\ne 0}\hat A(r)^2\hat A(-2r).\]
The left-hand side of this expression is the probability that $x,y,z$ all belong to $A$ if you choose them randomly to satisfy the equation $x+y=2z$. Without the constraint that $x+y=2z$ this probability would be $\a^3$, since each of $x,y$ and $z$ would have a probability $\a$ of belonging to $A$. So the term $\a^3$ on the right-hand side can be thought of as ``what one would expect" and the remainder of the right-hand side is a measure of the effect of the dependence of $x,y$ and $z$ on each other. 

However, this effect depends significantly on $A$. If the elements of $A$ are chosen independently at random with probability $\a$, then for each pair of distinct $x,y$ the events $x\in A$, $y\in A$ and $(x+y)/2\in A$ are independent, so restricting the average to triples $(x,y,z)$ such that $x+y=2z$ will typically have very little effect. By contrast, if $A$ is an interval of length $\a n$, then the events become highly correlated. So the term $\sum_{r\ne 0}\hat A(r)^2\hat A(-2r)$ is a measure of quasirandomness of $A$: the smaller it is, the less the events $x\in A$, $y\in A$ and $z\in A$ are correlated if $x,y,z$ are chosen randomly to satisfy the constraint $x+y=2z$.

It is to bound the remainder term that we use the Cauchy-Schwarz inequality, and also the even more elementary inequality $|\langle f,g\rangle|\leq\|f\|_1\|g\|_\infty$. We find that
\begin{eqnarray*}|\sum_{r\ne 0}\hat A(r)^2\hat A(-2r)|&\leq&\max_{r\ne 0}|\hat A(r)|\sum_{r\ne 0}|\hat A(r)||\hat A(-2r)|\\
&\leq&\max_{r\ne 0}|\hat A(r)|(\sum_r|\hat A(r)|^2)^{1/2}(\sum_r|\hat |A(-2r)|^2)^{1/2}\\
&=&\max_{r\ne 0}|\hat A(r)|\|\hat A\|_2^2\\
&=&\a\max_{r\ne 0}|\hat A(r)|.\\
\end{eqnarray*}
It follows that 
\[\E_{x+y=2z}A(x)A(y)A(z)\geq\a^3-\a\max_{r\ne 0}|\hat A(r)|.\]
We see from this that if all the Fourier coefficients $\hat A(r)$ are small (more precisely, if they all have size significantly less than $\a^2$), then the number of triples $(x,y,z)\in A^3$ with $x+y=2z$ is indeed close to $\a^3n^2$, the approximate number we would get if the elements of $A$ were chosen independently at random, each with probability $\a$. 

Therefore, either we have the arithmetic progression we are looking for (strictly speaking, this is incorrect because our triples satisfy the equation $x+y=2z$ in $\Z_n$ and not necessarily in $\Z$ when we regard $x,y$ and $z$ as ordinary integers, but this is a technical problem that can be dealt with), or $A$ has a large Fourier coefficient $\hat A(r)$ for some non-zero $r$. Here, ``large" can be taken to mean ``of absolute value at least $c\a^2$" for some absolute constant $c>0$.

In the second case, let us define a function $f:\Z_n\to\R$ by setting $f(x)=A(x)-\a$ for each $x$. It is easy to show that $\hat f(r)=\hat A(r)$ (this uses the fact that $r\ne 0$). So we obtain an inequality 
\[|\hat f(r)|=|\E_xf(x)\omega^{-rx}|\geq c\a^2.\]
At this point we use a lemma, which I shall state imprecisely.

\begin{lemma}
For every $r\ne 0$ there exists a partition of $\Z_n$ into arithmetic progressions $P_1,\dots,P_m$, each of length at least $c\sqrt n$, such that the function $\omega^{rx}$ is approximately constant on each $P_i$.
\end{lemma}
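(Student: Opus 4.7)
The plan is to use a Dirichlet-type approximation argument to produce a single common difference $d$ such that $\omega^{rd}$ is very close to $1$, and then build the partition out of arithmetic progressions with that common difference.

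First I would apply Dirichlet's approximation theorem to the real number $r/n$: looking at the $\lfloor\sqrt n\rfloor+1$ fractional parts $\{kr/n\}$ for $0\le k\le\lfloor\sqrt n\rfloor$ inside $[0,1)$ and pigeonholing them into $\lfloor\sqrt n\rfloor$ equal subintervals, I get integers $d, p$ with $1\le d\le \sqrt n$ and $|dr/n - p|\le 1/\sqrt n$. Consequently $\omega^{rd}=\exp(2\pi i\,dr/n)$ lies within angular distance $2\pi/\sqrt n$ of $1$ on the unit circle, so $|\omega^{rd}-1|\le 2\pi/\sqrt n$.

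Next I would partition $\Z_n$ into cosets of the cyclic subgroup $H=\langle d\rangle\subset\Z_n$. Writing $g=\gcd(d,n)$, there are $g$ cosets, each of size $n/g$, and since $g\le d\le\sqrt n$ every coset has size at least $\sqrt n$. Each coset has the form $\{j,j+d,j+2d,\dots,j+(n/g-1)d\}$, i.e.\ is itself an arithmetic progression of common difference $d$ in $\Z_n$. Fixing a small constant $c$ (the statement invites me to choose it freely), I would then chop each coset-AP into consecutive sub-APs of length between $c\sqrt n$ and $2c\sqrt n$; the tail of each coset can be merged into the previous block to guarantee the lower bound. Call the resulting pieces $P_1,\dots,P_m$.

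Finally I would check the approximate-constancy assertion. For $P_i=\{a,a+d,\dots,a+(L-1)d\}$ with $L\le 2c\sqrt n$, we have $\omega^{r(a+kd)}=\omega^{ra}(\omega^{rd})^k$, and a telescoping estimate gives $|(\omega^{rd})^k-1|\le k|\omega^{rd}-1|\le 2\pi L/\sqrt n\le 4\pi c$ for $0\le k<L$. Hence the oscillation of $\omega^{rx}$ on each $P_i$ is at most $4\pi c$, which can be made as small as we wish by choosing $c$ small. The main obstacle is the first step, which requires the right pigeonhole insight; once one has the common difference $d$ from Dirichlet, the coset decomposition and the arc-length estimate are essentially routine, and the only minor technicality is ensuring that the last block in each coset is large enough, which is handled by merging as above.
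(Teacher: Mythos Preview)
Your proposal is correct and follows essentially the same approach that the paper sketches: find a small common difference $d$ via pigeonhole (Dirichlet approximation) so that $\omega^{rd}$ is close to $1$, then partition $\Z_n$ into arithmetic progressions with common difference $d$. The paper leaves this as ``an exercise based on a well-known technique'' and only mentions the pigeonhole step; your coset decomposition followed by chopping into blocks of length $\sim c\sqrt n$, together with the telescoping oscillation estimate, is exactly the standard way to carry out that exercise.
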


The proof of the lemma is an exercise based on a well-known technique: one uses the fact that by the pigeonhole principle it is possible to find $0\leq u<v$ such that $v$ is not too large and $|\omega^{ru}-\omega^{rv}|=|1-\omega^{r(v-u)}|$ is small. One can then partition $\Z_n$ into arithmetic progressions of common difference $v-u$.

Given the lemma, one observes that
\[c\a^2 n\leq|\sum xf(x)\omega^{-rx}|\leq\sum_i|\sum_{x\in P_i}f(x)\omega^{-rx}|\approx\sum_i|\sum_{x\in P_i}f(x)|,\]
and also that
\[0=\sum_xf(x)=\sum_i\sum_{x\in P_i}f(x).\]
Adding these equations together and using an averaging argument, we find that there exists $i$ such that
\[|\sum_{x\in P_i}f(x)|+\sum_{x\in P_i}f(x)\geq c'\a^2|P_i|,\]
where $c'$ is a slightly smaller absolute constant (because of the approximation in the first equation), which implies that
\[\sum_{x\in P_i}f(x)\geq c'\a^2|P_i|.\]
Recalling that $f(x)=A(x)-\a$ for each $x$, we find that this is telling us that 
\[|A\cap P_i|\geq(\a+c'\a^2)|P_i|.\]
Thus, what we have managed to do is find an arithmetic progression $P_i$ of length at least $c\sqrt n$ such that the density of $A$ inside $P_i$ is greater than the density of $A$ inside $\Z_n$ by $c'\a^2$. 

We can iterate this argument: either $A\cap P_i$ contains an arithmetic progression of length $3$ or $P_i$ contains a subprogression of length at least $c\sqrt{|P_i|}$ inside which $A$ has density at least $\a+2c'\a^2$, and so on. The iteration must eventually terminate, because the density cannot exceed 1, and Roth's theorem is proved. 

If one analyses carefully the bound that comes out of the above argument, one finds that it shows that if $A$ is a subset of $\{1,2,\dots,n\}$ of density at least $C/\log\log n$, for some absolute constant $C$, then $A$ must contain an arithmetic progression of length 3. The double logarithm comes from the fact that we have to iterate $\a^{-1}$ times and each time we do so we take a square root.

This bound has been improved in interesting ways several times. The following table gives an idea of how the bounds have progressed over the years. The publication dates of the papers of Szemer\'edi and Heath Brown are slightly misleading: those results were actually independent. Also, the papers of Sanders obviously came out in the opposite order to the order in which the results were proved.
\begin{table}[ht]
\caption*{Bounds for Roth's theorem}
\centering
\begin{tabular} {c c c c}
\hline\hline
Author & Density bound & Published & Reference \\
\hline
Roth & $C/\log\log n$ & 1953 & \cite{roth53} \\
Heath-Brown & $C/(\log n)^c$, some $c>0$ & 1987 & \cite{heathbrown87} \\
Szemer\'edi & $C/(\log n)^{1/20}$ & 1990 & \cite{szem90} \\
Bourgain & $C(\log\log n/\log n)^{1/2}$ & 1999 & \cite{bourgain99} \\
Bourgain & $C(\log\log n)^2/(\log n)^{2/3}$ & 2008 & \cite{bourgain08} \\
Sanders & $(\log n)^{-3/4 + o(1)}$ & 2012 & \cite{sanders12:1} \\
Sanders & $C(\log\log n)^6/\log n$ & 2011 & \cite{sanders11} \\
Bloom & $C(\log\log n)^4/\log n$ & 2012 & \cite{bloom16} \\
\hline
\end{tabular}
\end{table}
The problem of improving the bounds for Roth's theorem has been an extremely fruitful one: the 2008 paper of Bourgain and the 2012 paper of Sanders could perhaps be regarded as clever refinements of existing techniques, but all the other papers introduced significant new ideas, many of which have been very influential and led to the solutions of several other problems.

To put these results in perspective, it is worth mentioning that the best known lower bound on the density (that is, the largest density known to be possible for a set that contains no progression of length 3) is $\exp(-c\sqrt{\log n})$, which is far lower than Bloom's current record upper bound. But even if that gap turns out to be very hard to close, we are tantalizingly close to a bound of $1/\log n$, which would be enough to give a purely combinatorial proof that the primes contain infinitely many arithmetic progressions of length 3 (a result that was proved by number-theoretic methods soon after Vinogradov proved his 3-primes theorem). In fact, a bound of $c\log\log n/\log n$ would suffice for this, since the fact that the primes have very small intersection with some arithmetic progressions (such as the even numbers) can be used to show that there are arithmetic progressions of length $n$ inside which the primes have at least that density.

\section{A first generalization -- to arbitrary finite Abelian groups}

Many of the proof techniques that give us results about subsets of $\Z_n$ work just as well in an arbitrary Abelian group. This turns out to be a very useful observation, as there are some Abelian groups, in particular the groups $\F_p^n$ for fixed $p$ and large $n$, where the proofs are much cleaner. So sometimes to work out the proof of a result about $\Z_n$ it is a good strategy to prove an analogue for a group such as $\F_3^n$ first and then work out how to modify the argument so that it works in $\Z_n$. (For a much fuller explanation of the benefits of this strategy, a survey by Ben Green from 2005 \cite{green05} and a follow-up by Julia Wolf written a decade later \cite{wolf15} are highly recommended.)

Recall the inversion formula for the Fourier transform on $\Z_n$, which states that
\[f(x)=\sum_r\hat f(r)\omega^{rx}.\]
If we write $\omega_r$ for the function $x\mapsto\omega^{rx}$, then we can write the formula in the slightly more abstract form
\[f=\sum_r\hat f(r)\omega_r,\]
which is showing us how to write $f$ as a linear combination of the functions $\omega_r$.

What is special about the functions $\omega_r$? The property that singles them out is that they are the \emph{characters} of $\Z_n$, that is, the homomorphisms from $\Z_n$ to $\C$. It turns out to be straightforward to generalize Fourier analysis to all finite Abelian groups $G$ by decomposing functions $f:G\to\C$ as linear combinations of characters.

For this to work, we would like the characters to form an orthonormal basis, which they do, by a well-known argument. To see the orthonormality, let $\chi$ be a non-trivial character, let $y\in G$ be such that $\chi(y)\ne 1$, and observe that
\[\E_x\chi(x)=\E_x\chi(xy)=\chi(y)\E_x\chi(x),\]
from which it follows that $\E_x\chi(x)=0$. But then if $\chi_1$ and $\chi_2$ are distinct characters, we have that
\[\langle\chi_1,\chi_2\rangle=\E_x\chi_1(x)\overline{\chi_2(x)}=\E_x\chi_1(x)\chi_2(x)^{-1},\]
which is zero, since $\chi_1\chi_2^{-1}$ is a non-trivial character.

Less elementary is the fact that the characters span $G$. For this one needs the structure theorem for finite Abelian groups, which gives us that $G$ is a product of cyclic groups. We know that each cyclic group has a complete basis of characters, and the products of those characters form a basis of characters for the whole group, which gives us a complete set.

Given that the characters form an orthonormal basis, we can expand a function $f$ as a linear combination $\sum_\chi \langle f,\chi\rangle\chi$. The coefficients $\langle f,\chi\rangle$ are called the Fourier coefficients of $f$ and denoted $\hat f(\chi)$. That is, we have the formula
\[\hat f(\chi)=\E_x f(x)\overline{\chi(x)}\]
for the Fourier transform, and the statement that $f=\sum_\chi\langle f,\chi\rangle\chi$ is giving us our inversion formula
\[f(x)=\sum_\chi\hat f(\chi)\chi(x).\]
The fact that we are writing $\hat f(\chi)$ represents a slight change of notation from the $\Z_n$ case, where we wrote $\hat f(r)$ instead of $\hat f(\omega_r)$. This emphasizes the fact that properly speaking the Fourier transform is defined on the dual group $\hat G$ rather than on $G$. It happens that these two groups are isomorphic, but the isomorphism is not natural in the category-theoretic sense, and, as commented earlier, we like to put different measures on them.

When $G$ is the group $\F_3^n$, the characters take the form $\omega_r:x\mapsto \omega^{r.x}$, where now $r$ and $x$ are elements of $\F_3^n$, $\omega=\exp(2\pi i/3)$, and $r.x$ is shorthand for $\sum_{i=1}^nr_ix_i$. It was observed by Meshulam \cite{meshulam95} that Roth's proof of Roth's theorem has an analogue for subsets of $\F_3^n$, and that the proof is in fact considerably simpler in that context because there is no longer any need for the lemma about partitioning into arithmetic progressions on which a character is roughly constant. The theorem is as follows.

\begin{theorem}
There is a constant $C$ such that for every positive integer $n$, every subset $A\subset\F_3^n$ of density at least $C/n$ contains distinct elements $x,y,z$ such that $x+y+z=0$.
\end{theorem}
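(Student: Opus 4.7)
The plan is to follow the blueprint of the Roth proof sketched earlier, using the crucial simplification that in $\F_3^n$ the affine hyperplanes play exactly the role that was played by long arithmetic progressions on which a character is approximately constant, so the pigeonhole step used to construct those progressions can be skipped entirely.

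First I would set up the Fourier identity for the count of solutions to $x+y+z=0$. Since in $\F_3^n$ the condition $x+y+z=0$ together with $x=y$ forces $z=-2x=x$, counting \emph{distinct} triples differs from counting all solutions only by the $|A|=\alpha 3^n$ diagonal triples $(x,x,x)$. Expanding each factor of $A$ via the inversion formula and using orthogonality of characters yields the clean identity
\[ \E_{x+y+z=0}A(x)A(y)A(z) = \sum_r\hat A(r)^3. \]
Isolating the $r=0$ term, which equals $\alpha^3$, and estimating the remainder by $|\langle f,g\rangle|\le\|f\|_1\|g\|_\infty$ together with Parseval ($\sum_r|\hat A(r)|^2=\alpha$), I obtain
\[ \E_{x+y+z=0}A(x)A(y)A(z) \ge \alpha^3 - \alpha\max_{r\ne 0}|\hat A(r)|. \]

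Now the familiar dichotomy. If $\max_{r\ne 0}|\hat A(r)|\le \alpha^2/2$, then the count of solutions inside $A$ is at least $\alpha^3 3^{2n}/2$, which for $\alpha\ge C/n$ with $C$ a suitable absolute constant dominates the $\alpha 3^n$ diagonal contribution and produces a genuine distinct triple. Otherwise there exists $r\ne 0$ with $|\hat A(r)|\ge\alpha^2/2$, and I would convert this into a density increment on an affine hyperplane. The character $x\mapsto\omega^{-r\cdot x}$ is constant on each of the three cosets of $H=\{x:r\cdot x=0\}$, so writing $\alpha_0,\alpha_1,\alpha_2$ for the densities of $A$ on these cosets gives $\hat A(r)=\frac{1}{3}\sum_i\omega^{-i}(\alpha_i-\alpha)$, whence $\sum_i|\alpha_i-\alpha|\ge\frac{3}{2}\alpha^2$. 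Since this vector is mean zero, some $\alpha_i$ must exceed $\alpha+c\alpha^2$ for an absolute constant $c>0$.

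Finally I would iterate. The affine coset in question is a copy of $\F_3^{n-1}$, so after one step I have a denser instance of the same problem in one fewer dimension, and the recurrence $\alpha_{k+1}\ge\alpha_k+c\alpha_k^2$ forces $1/\alpha_k$ to drop by a positive constant at each step, so the iteration cannot run for more than $O(1/\alpha)$ steps without the density exceeding $1$. The main thing to verify, and the step that pins down the $C/n$ threshold in the statement, is the bookkeeping: choosing $C$ a sufficiently large absolute constant must ensure both that $O(1/\alpha)=O(n/C)$ iterations never exhaust the available dimensions, and that at every stage $\alpha_k^2\cdot 3^{n-k}$ remains much larger than $1$ so the diagonal term stays negligible against the Fourier lower bound. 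With that in place, the iteration is forced to terminate through the first horn of the dichotomy, which delivers the desired distinct $x,y,z\in A$ with $x+y+z=0$.
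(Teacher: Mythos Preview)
Your proposal is correct and follows essentially the same approach as the paper's sketch: the Fourier dichotomy yields either enough solutions or a large nontrivial Fourier coefficient, and in the latter case the character is \emph{exactly} constant on cosets of a hyperplane, giving a density increment on a copy of $\F_3^{n-1}$ without the partitioning lemma needed in $\Z_n$. Your treatment is in fact more detailed than the paper's brief outline, and the bookkeeping you flag (that $O(1/\alpha)$ iterations stay within the available dimensions and that the diagonal contribution remains negligible) is exactly what pins down the $C/n$ threshold.
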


Note that in $\F_3^n$ the equation $x+y+z=0$ is equivalent to the equation $x+y=2z$, so the analogy with Roth's theorem is very close. As for the proof, one gets in exactly the same way that either $A$ looks random enough that it must contain an arithmetic progression or there is a non-zero $r$ such that $\hat{A}(r)$ (or $\hat A(\omega_r)$ if you prefer) has magnitude at least $c\a^2$, where $\a$ is the density of $A$. In the second case, it is easy to show that $A$ has density at least $\a+c'\a^2$ in at least one of the three sets $\{x:r.x=i\}$ (where $i=0,1$ or $2$). Since these sets are just subspaces of $\F_3^n$ of codimension 1, we are then already in a position to iterate.

This argument illustrates very well why it can be fruitful to look at more general Abelian groups. Because the group $\F_3^n$ has a rich set of cosets of subgroups -- namely all the affine subspaces -- it is very convenient for iterative arguments. This somehow allows one to focus on the ``real issues". In more general Abelian groups, and in particular with the cyclic groups $\Z_n$, one has to make do with subsets that are ``subgroup-like". Doing so is possible, but it creates technical problems that can make arguments hard work to write down and even harder to read. 

Until fairly recently, the best known upper bound was given by the simple argument outlined above. But in 2011 Bateman and Katz improved the bound to one of the form $C/n^{1+\e}$ for fixed constants $C$ and $\e>0$. This was a remarkable achievement, given how long the bound had stood still, but the gap that remained was still huge. 

In the other direction, the best known method for producing lower bounds was to look for an example $B\subset\F_3^k$ for some small $k$ and then to use it to create a class of examples $A=B^r\subset\F_3^{kr}$. If $B$ has density $c^k$ and $n=kr$, then $B^r$ has density $c^{kr}=c^n$. But the following question was left wide open by the result of Bateman and Katz.

\begin{question}
Let $c_n$ be the greatest possible density of a subset $A\subset\F_3^n$ that contains no three distinct elements $x,y,z$ such that $x+y+z=0$. Does there exist $\theta<1$ such that $c_n\leq\theta^n$ for every $n$?
\end{question}

\noindent Very recently -- in May 2016 -- this problem was solved using a completely different method, in a development that astonished additive combinatorialists. First, Croot, Lev and Pach obtained an upper bound of this exponential type for subsets of $\Z_4^n$ with no 3-term arithmetic progression \cite{crootlevpach16}. Then, barely a week later, Jordan Ellenberg and Dion Gijswijt independently saw how to modify the argument of Croot, Lev and Pach to give a similar bound for the cap-set problem itself, thereby giving a positive answer to the question above~\cite{ellenberggijswijt16}. 

The paper of Ellenberg and Gijswijt is short and self-contained, but here is a very brief outline of how the proof goes. Suppose that $A\subset\F_3^n$ is a set that contains no solution to the equation $x+y=2z$. For each $d$, let $Q_d$ be the vector space of polynomials over $\F_3$ in $n$ variables that have degree at most 2 in each variable and that have total degree at most $d$. The polynomials of degree at most 2 in each variable are distinct not just as polynomials but also as functions on $\F_3^n$, so $Q_{2n}$ is the space of all functions from $\F_3^n$ to $\F_3$.

A random polynomial of degree at most 2 in each variable has expected degree $n$, and the probability that its degree deviates significantly from $n$ is tiny. In particular, the probability that a polynomial belongs to $Q_{2n/3}$ or fails to belong to $Q_{4n/3}$ is exponentially small. 

Write $2.A$ for the set $\{2x:x\in A\}$. Then our hypothesis about $A$ can be expressed as the statement that $(A+A)\cap 2.A=\emptyset$. If $A$, and therefore $2.A$, has density significantly larger than the probability that a polynomial fails to belong to $Q_{4n/3}$, then a simple dimension argument shows that there exists a polynomial of degree at most $4n/3$ that vanishes outside $2.A$ and does not vanish inside $2.A$. With a little care, one can show that if $A$ is a bit bigger than this, then there exists a polynomial of degree at most $4n/3$ that is zero outside $2.A$ and non-zero on at least two thirds of the points in $2.A$.

In particular, there is a polynomial $P$ of degree at most $4n/3$ that vanishes on $A+A$ and is non-zero at at least two thirds of the points in $2.A$. This implies that if we define a function $f:A\times A\to\F_3$ by $f(x,y)=P(x+y)$, then this function, considered as a matrix, has rank at least $2|A|/3$, since it is zero off the diagonal and non-zero in at least $2|A|/3$ places on the diagonal.

However, one can also show that $f$ has smaller rank than this, using the fact that it is of the special form $P(x+y)$, where $P$ is a polynomial of degree at most $4n/3$. The idea is to expand out $P(x+y)$ as a linear combination of monomials in the $x_i$ and $y_i$ and divide up the sum according to whether the contribution from the $x_i$ has degree at most $2n/3$ or the contribution from the $y_i$ does. It is straightforward to show then that the rank of $f$ is at most twice the dimension of $Q_{2n/3}$, which, as we have commented, is exponentially small compared with $3^n$.

This proof, though simple, still needs to be fully digested. Is it going to lead to solutions to many other problems, such as the problem of finding the right bounds for Roth's theorem? (It has not been used for that, but it has been used for some other problems already \cite{foxlovasz16, green16}.) Is Fourier analysis about to be dethroned from its position as the tool of choice for this kind of problem? How are the two approaches related, if at all? It is too early to say, but it seems highly likely that there will be further developments in the not too distant future.

\section{The $U^2$ norm}

In the proof of Roth's theorem, we had a useful measure of the quasirandomness of a function, namely the size of its largest Fourier coefficient -- the smaller that size, the more quasirandom the function. However, this measure has the disadvantage that there isn't an obvious physical-space interpretation of $\|\hat f\|_\infty$ -- that is, an expression in terms of the values of $f$ that does not mention the Fourier transform. Instead, one often prefers to use the measure $\|\hat f\|_4$, which does turn out to have a physical-space interpretation. In the contexts we care about, these two quantities are roughly equivalent, since we have the trivial inequalities
\[\|\hat f\|_\infty^4\leq\|\hat f\|_4^4\leq\|\hat f\|_\infty^2\|\hat f\|_2^2,\]
and we usually deal with functions $f$ such that $\|\hat f\|_2=\|f\|_2\leq 1$. This tells us that $\|\hat f\|_\infty$ is small if and only if $\|\hat f\|_4$ is small (though if we pass from one equivalent statement to the other and back again, we obtain a worse constant of smallness than the one we started with).

The reason that $\|\hat f\|_4$ is nice is that
\[\|\hat f\|_4^4=\sum_r|\hat f(r)|^4=\langle\hat f^2,\hat f^2\rangle=\langle f*f,f*f\rangle=\E_{x+y=z+w}f(x)f(y)\overline{f(z)f(w)},\]
where in the above argument we used the definition of the $\ell_4$ norm, the definition of the inner product on $\hat Z_n$, Parseval's identity and the convolution identity, and the definition of convolutions and inner products in $\Z_n$. (It is also possible to prove the identity above using a direct calculation, but it is nicer to use the basic properties of the Fourier transform.) 

Quadruples $(x,y,z,w)$ with $x+y=z+w$ are the same as quadruples of the form $(x,x+a+b,x+a,x+b)$, so the final expression above can be written in the form
\[\E_{x,a,b}f(x)\overline{f(x+a)f(x+b)}f(x+a+b).\]
Since this equals $\|\hat f\|_4^4$, we find that it is possible to define a norm $\|f\|_{U^2}$ by the formula
\[\|f\|_{U^2}=(\E_{x,a,b}f(x)\overline{f(x+a)f(x+b)}f(x+a+b))^{1/4}.\]
This may seem pointless, since it is just renaming the norm $f\mapsto\|\hat f\|_4$, but we use a different name to emphasize that we are using a purely physical-space definition. The great advantage of doing this is that it gives us an alternative definition that has, as we shall see later, a very natural and useful generalization that does not correspond to any direct generalization of the definition in terms of Fourier coefficients. 

A useful fact about the $U^2$ norm is that it satisfies a kind of Cauchy-Schwarz inequality. Let us define a generalized inner product by the formula
\[[f_1,f_2,f_3,f_4]=\E_{x,a,b}f_1(x)\overline{f_2(x+a)f_3(x+b)}f_4(x+a+b).\]
Then $\|f\|_{U^2}^4=[f,f,f,f]$. The inequality states that
\[[f_1,f_2,f_3,f_4]\leq\|f_1\|_{U^2}\|f_2\|_{U^2}\|f_3\|_{U^2}\|f_4\|_{U^2}.\]
We quickly sketch a proof. We have that
\begin{align*}
[f_1,f_2,f_3,f_4]&=\E_{x,y,a}f_1(x)\overline{f_2(x+a)f_3(y)}f_4(y+a)\\
&=\E_a(\E_xf_1(x)\overline{f_2(x+a)})\overline{(\E_yf_3(y)\overline{f_4(y+a)})}\\
&\leq(\E_a|\E_xf_1(x)\overline{f_2(x+a)}|^2)^{1/2}(\E_a|\E_yf_3(y)\overline{f_4(y+a)}|^2)^{1/2}\\
\end{align*}
by the usual Cauchy-Schwarz inequality. But this last expression is easily seen to be
\[[f_1,f_2,f_1,f_2]^{1/2}[f_3,f_4,f_3,f_4]^{1/2}.\]
Furthermore, we have the symmetry $[f_1,f_2,f_3,f_4]=[f_1,f_3,f_2,f_4]$, so we can rewrite the last expression as 
\[[f_1,f_1,f_2,f_2]^{1/2}[f_3,f_3,f_4,f_4]^{1/2}.\]
Applying the argument again we find that 
\[[f_1,f_1,f_2,f_2]\leq[f_1,f_1,f_1,f_1]^{1/2}[f_2,f_2,f_2,f_2]^{1/2},\]
and similarly for $f_3$ and $f_4$, and from this the result follows.

This inequality gives us a generalized Minkowski inequality in just the way that the normal Cauchy-Schwarz inequality gives the normal Minkowski inequality. Indeed,
\begin{align*}\|f_0+f_1\|_{U^2}^4&=[f_0+f_1,f_0+f_1,f_0+f_1,f_0+f_1]\\
&=\sum_{\e\in\{0,1\}^4}[f_{\e_1},f_{\e_2},f_{\e_3},f_{\e_4}]\\
&\leq\sum_{\e\in\{0,1\}^4}\|f_{\e_1}\|_{U^2}\|f_{\e_2}\|_{U^2}\|f_{\e_3}\|_{U^2}\|f_{\e_4}\|_{U^2}\\
&=(\|f_0\|_{U^2}+\|f_1\|_{U^2})^4.\\
\end{align*}
We thus have a proof, entirely in physical space, that the $U^2$ norm is a norm.

If $A\subset\Z_n$, then we can measure the quasirandomness of $A$ as follows. Let $\a$ be the density of $A$ and write $A(x)=\a+f(x)$. Then by the loose equivalence of the $\ell_\infty$ and $\ell_4$ norms of the Fourier coefficients, we have that $A$ is quasirandom in a useful sense if $\|f\|_{U^2}$ is small. One can check easily that $\|A\|_{U^2}^4=\a^4+\|f\|_{U^2}^4$, so this is saying that $\|A\|_{U^2}$ is approximately equal to $\a^4$. But $\|A\|_{U^2}^4$ has a nice interpretation. Recall that it equals
\[\E_{x+y=z+w}A(x)A(y)A(z)A(w),\]
which is the probability, if you choose a random quadruple $(x,y,z,w)$ such that $x+y=z+w$, that all of $x,y,z$ and $w$ lie in $A$. This we call the \emph{additive quadruple density} of $A$. Thus, a set of density $\a$ has additive quadruple density at least $\a^4$, with near equality if it is quasirandom in a useful sense.

An important final remark is that one can also prove entirely in physical space that if $A$ is quasirandom in this sense, then its arithmetic-progression density is roughly $\a^3$. Indeed, writing $A(x)=\a+f(x)$ again, and noting that if we pick a random triple $(x,y,z)$ with $x+y=2z$, then any two of $x,y$ and $z$ will be independent and uniformly distributed (always assuming that $n$ is odd), we have that
\begin{align*}
\E_{x+y=2z}A(x)A(y)A(z)&=\E_{x+y=2z}(\a+f(x))(\a+f(y))(\a+f(z))\\
&=\a^3+\E_{x+y=2z}f(x)f(y)f(z)\\
&=\langle f*f,f_2\rangle,\\
\end{align*}
where $f_2(z)=f(z/2)$ for each $z$. But by Cauchy-Schwarz and the fact that $f$ takes values of modulus at most 1,
\begin{align*}
|\langle f*f,f_2\rangle|^2&\leq\|f*f\|_2^2\|f_2\|_2^2\\
&\leq\E_{x+y=z+w}f(x)f(y)\overline{f(z)f(w)}\\
&=\|f\|_{U^2}^4.\\
\end{align*}
Therefore, if $\|f\|_{U^2}$ is small, then $\E_{x+y=2z}A(x)A(y)A(z)\approx\a^3$. 

In due course, we shall see how the arguments given above are more amenable to generalization than the Fourier-analytic proof we gave earlier.

We close this section by remarking that the definition of the $U^2$ norm and the basic observations we have made about it work just as well in an arbitrary finite Abelian group, and several of its properties hold even for non-Abelian groups.

\section{Generalization to matrices}

Given a function $f$ we can define a linear map $T_f$ that takes a function $g$ to the convolution $f*g$. That is, we have
\[T_f(g)(x)=\E_{u}f(x-u)g(u).\]
If we define a matrix $M_f$ by $M_f(x,u)=f(x-u)$, then this formula becomes
\[T_f(g)(x)=\E_uM_f(x,u)g(u),\]
which is just the usual formula for multiplying a matrix by a vector, except that instead of summing over $u$ we have taken the expectation. It will be convenient, for the purposes of this section, to adopt a non-standard definition of matrix multiplication by using this normalization. That is, we will say that if $A$ and $B$ are two matrices, then 
\[(AB)(x,z)=\E_yA(x,y)B(y,z).\]
Since 
\[T_fT_gh=T_f(g*h)=f*(g*h)=(f*g)*h,\]
we get that $M_fM_g=M_{f*g}$ with this normalization.

Notice that 
\[T_f(\omega_r)(x)=f*\omega_r(x)=\E_uf(u)\omega^{r(x-u)}=\hat f(r)\omega_r(x).\]
Thus, $\omega_r$ is an eigenvector of $T_f$ with eigenvalue $\hat f(r)$. 

A more conceptual way of seeing this is to note that by the convolution identity, the convolution of $f$ with the function $g=\sum_r\hat g(r)\omega_r$ is the function $\sum_r\hat f(r)\hat g(r)\omega_r$, so with respect to the basis $\omega_0,\dots,\omega_{n-1}$ all convolution maps $T_f$ are multipliers (that is, given by diagonal matrices). 

These observations allow us to translate some of the concepts we have defined so far into matrix language. The Fourier coefficients of a function $f$ become the eigenvalues of the matrix $M_f$. However, that is just the beginning. Let us write $a\otimes b$ for the rank-1 matrix with 
\[(a\otimes b)(u,v)=a(u)b(v).\]
Note that if $a,b,f:\Z_n\to\C$, then
\[(a\otimes \overline{b})(f)(x)=a(x)\E_yf(y)\overline{b(y)}=a(x)\langle f,b\rangle.\]
Thus, the diagonalization of $f$ is telling us that 
\[M_f=\sum_r\hat f(r)\omega_r\otimes\overline{\omega_r},\]
since if we apply either side to the function $\omega_s$ we obtain $\hat f(s)\omega_s$.

We are now in a position to write down Parseval's identity in matrix terms. First, note that
\[\E_{x,y}|M_f(x,y)|^2=\E_{x,y}|f(x-y)|^2=\E_x|f(x)|^2=\|f\|_2^2.\]
Therefore, by Parseval's identity, we find that 
\[\E_{x,y}|M_f(x,y)|^2=\sum_r|\hat f(r)|^2.\]
The left-hand side is the $L_2$ norm of the matrix entries of $M_f$, which is often known as the (normalized) \emph{Hilbert-Schmidt norm}. And the right-hand side, though it appears to be expressed in terms of $f$, can be thought of as the sum of squares of the eigenvalues of $M_f$. 

This connection can be generalized to all matrices that have an orthonormal basis $u_1,\dots,u_n$ of eigenvectors. In that case we can write $M=\sum_i\lambda_iu_i\otimes\overline{u_i}$ and we find that
\begin{align*}\E_{x,y}|M(x,y)|^2&=\E_{x,y}\sum_{i,j}\lambda_i\overline{\lambda_j}u_i(x)\overline{u_i(y)u_j(x)}u_j(y)\\
&=\sum_{i,j}\lambda_i\overline{\lambda_j}\E_{x,y}u_i(x)\overline{u_i(y)u_j(x)}u_j(y)\\
&=\sum_{i,j}\lambda_i\overline{\lambda_j}|\langle u_i,u_j\rangle|^2\\
&=\sum_i|\lambda_i|^2.\\
\end{align*}
More generally still, if $M$ does not have an orthonormal basis of eigenvectors, it will still have a \emph{singular value decomposition}, that is, a decomposition of the form $\sum_i\lambda_iu_i\otimes\overline{v_i}$ where $(u_i)_1^n$ and $(v_i)_1^n$ are both orthonormal bases and the $\lambda_i$ are non-negative real numbers. (The non-negativity can be obtained by multiplying the $v_i$ by suitable scalars of modulus 1.) The above argument carries over with very little change, and we find that $\|M\|_2^2$ (that is, the square of the normalized Hilbert-Schmidt norm) is equal to the sum of the squares of the singular values. 

As we have already made clear, this fact specializes to Parseval's identity when the matrix is the matrix $M_f$ of a convolution operator $T_f$. 

More importantly, singular values of matrices play a rather similar role in graph theory to the role played by Fourier coefficients in additive combinatorics. To see this, let us first find an analogue for matrices of the $U^2$ norm. Given the correspondence so far, it should be equal to the $\ell_4$ norm of the singular values, and its fourth power should have a nice interpretation in terms of the matrix values. This does indeed turn out to be the case. An argument similar to the one just given for the Hilbert-Schmidt norm, but slightly more complicated, shows that
\[\sum_i|\lambda_i|^4=\E_{x,y,a,b}M(x,y)\overline{M(x+a,y)M(x,y+b)}M(x+a,y+b).\]
Now the fourth root of the left-hand side is a well-known matrix norm -- the fourth-power trace class norm. From this one can deduce that the fourth root of the right-hand side is a norm, which we write as $\|M\|_\square$ and call the \emph{box norm} (because we are summing over aligned rectangles). But as with the $U^2$ norm, one can prove this fact directly by first defining a generalized inner product for two-variable functions
\[[f_1,f_2,f_3,f_4]=\E_{x,y,a,b}f_1(x,y)\overline{f_2(x+a,y)f_3(x,y+b)}f_4(x+a,y+b),\]
using the Cauchy-Schwarz inequality to prove that
\[[f_1,f_2,f_3,f_4]\leq\|f_1\|_\square\|f_2\|_\square\|f_3\|_\square\|f_4\|_\square,\]
and finally deducing that $\|f+g\|_\square^4\leq(\|f\|_\square+\|g\|_\square)^4$ in more or less the same way as we did for the $U^2$ norm.

After this it will come as no surprise to learn that the box norm specializes to the $U^2$ norm when the matrix is a Toeplitz matrix (that is, the matrix of a convolution operator). Indeed, we have that
\begin{align*}\|M_f\|_\square^4&=\E_{x,y,a,b}f(x-y)\overline{f(x+a-y)f(x-y-b)}f(x+a-y-b)\\
&=\E_{x,a,b}f(x)\overline{f(x+a)f(x-b)}f(x+a-b)\\
&=\E_{x,a,b}f(x)\overline{f(x+a)f(x+b)}f(x+a+b)\\
&=\|f\|_{U^2}^4.\\
\end{align*}
Of course, we could also have deduced this less directly by using the relationship between eigenvalues, Fourier coefficients, and the two norms.

Now let us take a graph $G$ and let $M$ be its adjacency matrix. (That is, $M(x,y)=1$ if there is an edge from $x$ to $y$ and 0 otherwise.) Then the analogy between subsets of $\Z_n$ (or more general finite Abelian groups) and matrices strongly suggests that the box norm $\|.\|_\square$ should be a useful measure of quasirandomness. That is indeed the case. If $G$ has density $\d$, meaning that $\E_{x,y}M(x,y)=\d$, then a straightforward argument using the Cauchy-Schwarz inequality shows that $\|M\|_\square\geq\d$. If equality almost holds, then $G$ turns out to enjoy a number of properties that typical random graphs have. 

To see this, we begin by noting that the box norm relates to the largest singular value in much the way that the $U^2$ norm relates to the largest Fourier coefficient. If the singular values are $\lambda_1,\dots,\lambda_n$ and if $\lambda=(\lambda_1,\dots,\lambda_n)$, then
\[\|\lambda\|_\infty^4\leq\|\lambda\|_4^4\leq\|\lambda\|_2^2\|\lambda\|_\infty^2,\]
and if the matrix entries have modulus at most 1 then we know in addition that $\|\lambda\|_2^2=\|M\|_2^2\leq 1$. Therefore, the largest singular value (which is equal to the operator norm of the matrix) is small if and only if the box norm is small.

For convenience let us now assume that $G$ is regular, so every vertex has degree $\d n$. (This is not a major assumption, but the statements become slightly less clean and the proofs slightly more complicated if we do not make it.) Then the constant function $u(x)=1$ is an eigenvector of $M$ with eigenvalue $\d$. (Recall that we are using expectations in our matrix multiplication, which is why we get $\d$ here rather than $\d n$.)

Now consider the matrix $A=M-\d u\otimes u$. That is, $A(x,y)=M(x,y)-\d$. Since $G$ is regular, we find that $\E_xA(x,y)=0$ for every $y$ and $\E_yA(x,y)=0$ for every $x$. From this it is not hard to prove that $\|M\|_\square^4=\d^4+\|A\|_\square^4$: we expand $\|A+\d u\otimes u\|_\square^4$ as a sum of sixteen terms and the only ones that are not zero are the term with all $A$s and the term with all $\d$s.

Therefore, if $\|M\|_\square$ is close to $\d$, it follows that $\|A\|_\square$ is close to zero, which implies that the largest singular value of $A$ is small, and therefore that $A$ has a small operator norm. Let $\theta$ be this operator norm.

Now let $f$ and $g$ be two functions defined on the vertex set of $G$ that take values in the interval $[-1,1]$. Then 
\[|\langle Af,g\rangle|\leq\|Af\|_2\|g\|_2\leq\theta\|f\|_2\|g\|_2\leq\theta.\] 
We also have that
\[\langle(\d u\otimes u)(f),g\rangle=\langle(\d\E_xf(x))u,g\rangle=\d\E_xf(x)\E_yg(y).\]
It follows that
\[|\langle Mf,g\rangle-\d\E_xf(x)\E_yg(y)|\leq\theta.\]
But $\langle Mf,g\rangle=\E_{x,y}M(x,y)f(x)g(y)$, so if $\theta$ is small then this is telling us that
\[\E_{x,y}M(x,y)f(x)g(y)\approx\d\E_{x,y}f(x)g(y).\]
Suppose now that $f$ and $g$ are the characteristic functions of sets $U$ and $V$ of density $\a$ and $\b$. Now we have that
\[\E_{x,y}M(x,y)U(x)V(y)\approx\d\E_{x,y}U(x)V(y)=\d\a\b.\]
This tells us that the number of edges from $U$ to $V$ in the graph is approximately $\d|U||V|$, which is exactly the number one would expect if $G$ was a random graph with density $\d$. 

Now the fourth power of the box norm of $M$ can be seen to equal the \emph{4-cycle density} of the graph $G$, that is, the probability, if vertices $x_1,x_2,x_3,x_4$ are chosen independently at random, that $x_1x_2,x_2x_3,x_3x_4$ and $x_4x_1$ are all edges of $G$. Thus, we have started with a ``local'' assumption -- that the number of 4-cycles in the graph is almost as small as it can possibly be given the density of the graph -- and ended up with a global conclusion -- that the number of edges between any two large sets is approximately what one would expect in a random graph of the same density. This fact has many applications in graph theory. 

The converse can also be shown without too much difficulty. In fact, there turn out to be several properties that are all loosely equivalent and all say that in one way or another a graph $G$ behaves like a random graph. A particularly interesting one from the point of view of comparison with Roth's theorem is the statement that if a graph $G$ of density $\d$ is quasirandom (in, for example, the sense of having box norm approximately $\d$) then for any graph $H$ with $k$ edges (here $k$ is fixed and the size of $G$ is tending to infinity) the $H$ density in $G$ is approximately $\d^k$, as it would be in a random graph. Conversely, if $G$ contains the ``wrong" number of copies of $H$, then we can find a subgraph that is substantially denser than the original graph.

The theory of quasirandom graphs goes back to papers of Thomason \cite{thomason87} and Chung, Graham and Wilson \cite{chunggrahamwilson89}. It has subsequently been generalized in many directions and to many other mathematical structures, and quasirandomness has become a major theme in mathematics. (Of course, in other guises, this theme has existed for much longer: one has only to think of the distribution of prime numbers, for instance.) 

It is important to point out that not all the basic properties of the Fourier transform carry over in a nice way to matrices. For example, the inner product corresponding to the normalized Hilbert-Schmidt norm is 
\[\langle A,B\rangle=\E_{xy}A(x,y)\overline{B(x,y)}=\tr(AB^*)\]
(where the trace here is also defined in a normalized way -- that is, $\tr(A)=\E_xA_{xx}$). If the singular-value decompositions of $A$ and $B$ are $\sum_i\lambda_iu_i\otimes\overline{v_i}$ and $\sum_j\mu_jw_j\otimes\overline{z_j}$, then $\langle A,B\rangle$ works out to be
\[\sum_{i,j}\lambda_i\overline{\mu_j}\langle u_i,w_j\rangle\overline{\langle v_i,z_j\rangle}.\]
If it happens that $u_i=w_i$ and $v_i=z_i$ for every $i$, as it does when $A=B$, then this simplifies to $\sum_i\lambda_i\overline{\mu_i}$, the formula we would likeif we wanted a direct analogue of Parseval's identity, but if not then we have to make do with the more complicated formula above (which nevertheless can be useful sometimes). 

Similarly, there is no tidy analogue of the convolution identity except under very special circumstances. In general,
\[(\sum_i\lambda_iu_i\otimes\overline{v_i})(\sum_j\mu_jw_j\otimes{z_j})=\sum_{i,j}\lambda_i\mu_j\overline{\langle w_j,v_i\rangle}u_i\otimes\overline{z_j}.\]
If $v_i=w_i$ for every $i$, then this simplifies to $\sum_i\lambda_i\mu_iu_i\otimes\overline{z_i}$, so we find that the singular values of the matrix product are products of the singular values of the original matrices. But this is an unusual situation (that happens to occur when the two matrices are convolution matrices and all the bases are the same basis of trigonometric functions). 

\section{Quadratic Fourier analysis} \label{quadfanal}

In this section I shall discuss a generalization of Fourier analysis that lacks a satisfactory inversion formula. The inversion formula might seem to be such a fundamental property of the Fourier transform that the generalization does not deserve to be called a generalization of Fourier analysis. However, for several applications of Fourier analysis, a weaker property suffices, and that weaker property can be generalized. Nevertheless, it is a very interesting open problem to develop the theory further so as to make the analogy with conventional discrete Fourier analysis closer. 

Let us begin by looking at a problem that demonstrates the need for a generalization at all, namely Szemer\'edi's theorem for progressions of length~4. It is natural to try to model a proof on the proof for progressions of length 3. At the heart of that proof is the identity
\[\E_{x+y=2z}f(x)g(y)h(z)=\sum_r\hat f(r)\hat g(r)\hat h(-2r).\]
We have essentially proved this already, but a variant of the argument is to observe that both sides are equal to $\E_{x,y,z}f(x)g(y)h(z)\sum_r\omega^{-r(x+y-2z)}$. So it is natural to look for a similar identity for progressions of length 4. Such a progression can be thought of as a quadruple $(x,y,z,w)$ such that $x+z=2y$ and $y+w=2z$. However,
\begin{align*}\E_{x+z=2y,y+w=2z}&f_1(x)f_2(y)f_3(z)f_4(w)\\
&=\E_{x,y,z,w}f_1(x)f_2(y)f_3(z)f_4(w)\sum_{r,s}\omega^{-r(x-2y+z)-s(y-2z+w)}\\
&=\sum_{r,s}\hat{f_1}(r)\hat{f_2}(-2r+s)\hat{f_3}(r-2s)\hat{f_4}(s).\\
\end{align*}
A quadruple $(a,b,c,d)$ can be written in the form $(r,-2r+s,r-2s,s)$ if and only if $3a+2b+c=b+2c+3d=0$. So we have ended up with a sum over four variables that satisfy two linear equations, which is what we had before we took the Fourier transform. So we have not gained anything.

An even more compelling argument that the Fourier transform is too blunt a tool for our purposes is to note that it is possible for all the Fourier coefficients of $f_1,f_2,f_3$ and $f_4$ to be tiny, but for the expectation $\E_{x,d}f_1(x)f_2(x+d)f_3(x+2d)f_4(x+3d)$ to be large. (This is another way of writing the left-hand side of the equality above.) Let $f_1(x)=\omega^{x^2}$, $f_2(x)=\omega^{-3x^2}$, $f_3(x)=\omega^{3x^2}$ and $f_4(x)=\omega^{-x^2}$. Then
\[\E_{x,d}f_1(x)f_2(x+d)f_3(x+2d)f_4(x+3d)=\E_{x,d}\omega^{x^2-3(x+d)^2+3(x+2d)^2-(x+3d)^2}.\]
But the exponent on the right-hand side is identically zero, so both sides are equal to 1, which is as large as the expectation can possibly be given that all four functions take values of modulus 1. On the other hand, functions like $\omega^{x^2}$ have tiny Fourier coefficients. To see this (assuming for convenience that $n$ is odd), note that if $f(x)=\omega^{x^2}$, then
\[\hat f(r)=\E_x\omega^{x^2-rx}=\E_x\omega^{(x-r/2)^2-r^2/4}=\omega^{-r^2/4}\E_x\omega^{x^2}.\]
This shows that $|\hat f(r)|=|\E_x\omega^{x^2}|$ is the same for all $r$, and therefore by Parseval it equals $n^{-1/2}$ for all $r$. In other words, the largest Fourier coefficient is as small as Parseval's identity will allow.

It is almost impossible at this stage not to have the following thought. For Roth's theorem, the functions that caused trouble by not being sufficiently random-like were the trigonometric functions $x\mapsto\omega^{rx}$. These are \emph{linear phase functions} -- that is, compositions of linear functions with the function $x\mapsto\omega^x$. We have just seen that when it comes to discussing arithmetic progressions of length 4, quadratic phase functions, that is, functions of the form $\omega^{q(x)}$ where $q$ is a quadratic, cause problems. Could it be that these are somehow the only functions that cause problems? Does there exist some kind of ``quadratic Fourier analysis" that allows one to expand a function as a linear combination of quadratic phase functions and thereby to generalize the proof of Roth's theorem to progressions of length~4?

The answer to this question turns out to be a partial yes. More precisely, one can generalize ``linear" Fourier analysis by just enough to obtain a proof of Szemer\'edi's theorem for progressions of length 4, but the generalized Fourier analysis lacks some of the nice properties of the usual Fourier transform, as a result of which the proof becomes substantially harder. In particular, it turns out that the quadratic phase functions are \emph{not} the only ones that cause trouble -- there are also some more general functions that exhibit sufficiently quadratic-like behaviour to cause problems similar to the ones caused by the ``pure" quadratic phase functions. But before we get on to that, it will be useful to look at another concept that comes into the picture.

\section {The $U^3$ norm} \label{U3}

Discrete Fourier analysis decomposes a function into characters. It is far from obvious how to define a ``quadratic" analogue of this decomposition, since one's natural first guesses turn out not to have the properties one wants, as we shall see later. But right from the start it is clear that there are problems, because there are $n^2$ functions of the form $x\mapsto\omega^{ax^2+bx}$, so we cannot hope to define a quadratic Fourier transform by simply writing down a suitable basis of $\C^n$ and expanding functions in terms of that basis.

It is for this reason that the reformulation of the norm $f\mapsto\|\hat f\|_4$ in purely physical-space terms is so important. It gives us a concept that \emph{is} easy to generalize. As one might expect, there are $U^k$ norms for all $k\geq 2$ (and also a seminorm when $k=1$), but since it is clear what they are once one has seen the $U^3$ norm, we shall present just that. It is defined by the formula
\begin{align*}
\|f\|_{U^3}^8=\E_{x,a,b,c}f(x)\overline{f(x+a)f(x+b)}&f(x+a+b)\overline{f(x+c)}\\
&f(x+a+c)f(x+b+c)\overline{f(x+a+b+c)}\\
\end{align*}
That is, where the $U^2$ norm involves an average over ``squares", the $U^3$ norm involves a similar average over ``cubes". (The $U^k$ norm involves a similar average over $k$-dimensional cubes.) The letter U stands for ``uniformity", because when a function has a small uniformity norm, its values are ``uniformly distributed" in a useful sense.

There are a few remarks to make about the $U^3$ norm to give an idea of its basic properties and to indicate why it is likely to be important to us.
\begin{itemize}
\item First, it really is a norm. This is proved in much the same way as it is for the $U^2$ norm: one defines an appropriate generalized inner product (by using eight different functions in the formula above instead of just one), deduces a generalized Cauchy-Schwarz inequality from the conventional Cauchy-Schwarz inequality, and finally deduces a generalized Minkowski inequality from the generalized Cauchy-Schwarz inequality.
\item Secondly, if $f$ is a quadratic phase function $f(x)=\omega^{rx^2+sx}$, then $\|f\|_{U^3}$ takes the largest possible value (given that all the values of $f$ have modulus 1), namely 1. This is simple to check, and boils down to the fact that
\begin{align*}x^2-(x+a)^2-(x+b)^2&+(x+a+b)^2-(x+c)^2\\
&+(x+a+c)^2+(x+b+c)^2-(x+a+b+c)^2=0\\
\end{align*}
for every $x,a,b$ and $c$.
\item Thirdly, the $U^k$ norms increase as $k$ increases. In particular, the $U^3$ norm is larger than the $U^2$ norm. This means that the statement that $\|f\|_{U^3}$ is small is stronger than the statement that $\|f\|_{U^2}$ is small. That fact, combined with the observation that $\|f\|_{U^3}$ is large for quadratic phase functions, gives some reason to hope that the $U^3$ norm could be a useful measure of quasirandomness for Szemer\'edi's theorem for progressions of length 4.
\item Fourthly, if $A$ is a set of density $\a$, then an easy Cauchy-Schwarz argument shows that $\|A\|_{U^3}\geq\a$. Also, $\|A\|_{U^3}^8$ counts the number of ``cubes" in $A$. So when we talk about sets, we will want to regard a set as ``quadratically uniform" if it has almost the minimum number of cubes. This will be a stronger property than the ``linear uniformity" that we used in the proof of Roth's theorem, which is based on the number of squares.
\end{itemize}

Presenting those remarks is slightly misleading, however, as it suggests that the definition of the $U^3$ norm is a purely speculative generalization of the definition of the $U^2$ norm that just happens to be useful. In fact, the definition arises naturally (or at least can arise naturally) when one tries to generalize the physical-space argument we saw earlier that shows that a set with small $U^2$ norm has roughly the expected number of arithmetic progressions of length 3. One ends up being able to show that if $f_1,f_2,f_3$ and $f_4$ are functions that take values of modulus at most 1, then 
\[|\E_{x,d}f_1(x)f_2(x+d)f_3(x+2d)f_4(x+3d)|\leq\min_i\|f_i\|_{U^3}.\]
In other words, if one of the four functions has a small $U^3$ norm, then the arithmetic progression count must be small.

The point I am making here is that if one sets out to prove a bound for the left-hand side in terms of some suitable function of $f_4$, say, knowing that one's main tool is the Cauchy-Schwarz inequality, then the function that one obtains is precisely the $U^3$ norm.

The inequality above can be used to show that if $A$ is a set of density $\a$ and $\|A\|_{U^3}\leq\a+c(\a)$, then $A$ is sufficiently quasirandom to contain an arithmetic progression of length 4, and in fact to have 4-AP density approximately $\a^4$. To prove this, one writes $A=\a+f$ with $\|f\|_{U^3}$ small, one expands out the expression
\[\E_{x,d}A(x)A(x+d)A(x+2d)A(x+3d)\]
as a sum of 16 terms, and one uses the inequality above to show that all these terms are small apart from the main term $\a^4$. 

\section{Generalized quadratic phase functions}

In the previous section we noted that if $q$ is a quadratic function defined on $\Z_n$, and $f$ is the function $f(x)=\omega^{q(x)}$, then $\|f\|_{U^3}=1$, which is as large as it can possibly be. The key to this fact, as we have already noted, is that quadratic functions have the property that
\begin{align*} q(x)-q(x+a)-q(x+b)&+q(x+a+b)-q(x+c)\\
&+q(x+a+c)+q(x+b+c)-q(x+a+b+c)=0\\
\end{align*}
for every $x,a,b,c$. Moreover, this property characterizes quadratic functions.

However, if we do not insist on maximizing $\|f\|_{U^3}$ but merely getting \emph{close} to the maximum, then we suddenly let in a whole lot more functions. In this section I shall describe one or two of them.

There is a general recipe for producing them, which is to take a set $A\subset\Z_n$ and construct a \emph{quadratic homomorphism} on $A$ -- that is, a map $\psi:A\to\C$ that takes values of modulus 1 and satisfies the equation 
\[\psi(x)\overline{\psi(x+a)\psi(x+b)}\psi(x+a+b)\overline{\psi(x+c)}\psi(x+a+c)\psi(x+b+c)\overline{\psi(x+a+b+c)}=1\]
whenever all of $x, x+a, x+b, x+c, x+a+b, x+a+c, x+b+c$ and $x+a+b+c$ belong to $A$. (As we have already noted, if $A$ has density $\a$, there will be at least $\a^8n^4$ ``cubes" of this kind.) We then define $f(x)$ to be $\psi(x)$ for $x\in A$ and 0 otherwise. For this to produce interesting examples, we need to choose our set $A$ carefully, but that can be done.

As a first example, take $A$ to be the set $\{1,\dots,\lfloor n/2\rfloor\}$. If we now let $\b$ be any real number, we can define $f(x)$ to be $e^{2\pi i\b x^2}$ on $A$ and zero outside. If $\b$ is a multiple of $1/n$, then this will give us a function $\omega^{rx^2}$ restricted to $A$. However, if we choose $\b$ not to be close to a multiple of $1/n$ we can obtain functions that do not even correlate with functions of the form $\omega^{rx^2+sx}$. Suppose, for example, that we take $\b=1/2n$. Then our best chance of a correlation will be with either the constant function 1 or the function $\omega^{x^2}=e^{4\pi i \b x^2}$. In both cases, the inner product has modulus $n^{-1}|\sum_{x\in A}e^{\pi i x^2/n}|$, which can be shown to be small by a simple trick known as Weyl differencing: we observe that
\[|\sum_{x\in A}e^{\pi i x^2/n}|^2=\sum_{x,y}e^{\pi i (x^2-y^2)/n}=\sum_{x,y}e^{\pi i (x+y)(x-y)}.\]
The last sum can be split into a sum of geometric progressions, each of which can be evaluated explicitly, and almost all of which turn out to be small. Essentially the same technique proves that in fact our function $f$ has a very small correlation with any function of the form $\omega^{q(x)}$ for a quadratic function $q$ defined on $\Z_n$. 

It is worth stopping to think about why a similar argument does not show that we have to consider more functions even in the linear case. What if we take a function on the set $A$ above of the form $e^{2\pi i \b x}$ with $\b$ far from a multiple of $1/n$? In fact, what if we take $\b=1/2n$ as before?

In this case the correlation with a constant function has magnitude $n^{-1}|\sum_{x\in A}e^{\pi i x/n}|$, and $x/n$ lies between $0$ and $1/2$. It follows that all the numbers $e^{\pi x/n}$ are on one side of the unit circle, and the result is that we do not get the cancellation that occurred with the quadratic example above. The difference between the two situations is that the function $e^{\pi i x^2/n}$ jumps round the circle many times, whereas the function $e^{\pi i x/n}$ does not -- which is due to the fact that the function $x^2$ grows much more rapidly than the function $x$. 

Another way of choosing a set $A$ is to make it look like a portion of $\Z^d$ for some small $d$. To give an example with $d=2$, let $m=\lfloor \sqrt n/2\rfloor$ and let $A$ consist of all numbers of the form $x+2my$ such that $x,y\in\{0,1,\dots,m-1\}$. This we can think of as a two-dimensional set with basis $1$ and $2m$: the pair $(x,y)$ then represents the point $x+2my$ in coordinate form. 

An obvious class of functions to take on a multidimensional set is the class of quadratic forms, and we can do that here. We pick coefficients $a,b,c\in\Z_n$ and define $f(x+2my)$ to be $\omega^{ax^2+bxy+cy^2}$ for all $x,y\in\{0,1,\dots,m-1\}$ and take all other values of $f$ to be zero. It is easy to check that $f$ is a quadratic homomorphism in the sense just defined, and it can also be shown that $f$ does not correlate with any pure quadratic phase function.

We can of course combine these ideas by taking more general coefficients. We can also define a wide variety of two-dimensional sets by taking different ``basis vectors", and we can increase the dimension. Thus, the set of functions we are forced to consider is much richer than the corresponding set for the $U^2$ norm.

\section{Szemer\'edi's theorem for progressions of length 4}

We remarked at the end of Section \ref{U3} that if a set $A$ is quasirandom in the sense of having an almost minimal $U^3$ norm, then it contains an arithmetic progression of length 4. Furthermore, the proof of this fact is closely analogous to the proof of the corresponding fact relating the $U^2$ norm to arithmetic progressions of length 3. So it is natural to try to continue the analogy and complete a proof of Szemer\'edi's theorem for progressions of length 4. That is, we would like to argue that if the $U^3$ norm of $A$ is \emph{not} approximately minimal, then we can obtain a density increase on an appropriate subspace. 

At this point we find that we are a little stuck. In the $U^2$ case we used the fact that if $f$ is a function taking values of modulus at most 1, and $\|f\|_{U^2}=\|\hat f\|_4$ is bounded below by a positive constant $c$, then $\|\hat f\|_\infty$ is bounded below by $c^2$, which we can use to argue that a set with no arithmetic progression of length 3 must be sufficiently ``unrandom" to correlate well with a trigonometric function. So to continue the analogy, it looks as though we need to find norms $\|.\|$ and $|||.|||$ (here $\|f\|$ and $|||f|||$ are the hoped-for analogues of $\|\hat f\|_4$ and $\|\hat f\|_\infty$, respectively) with the following properties.
\begin{enumerate}
\item The norm $\|.\|$ is defined in a different way from the $U^3$ norm, but happens to be equal to it.
\item If $\|f\|_\infty\leq 1$ and $\|f\|\geq c$, then one can prove very straightforwardly that $|||f|||\geq\g(c)$ (where $\g(c)>0$ if $c>0$, and ideally the dependence will be a good one). 
\item The fact that $|||f|||\geq\g$ is telling us that there is some function $\psi\in\Psi$ for which $|\langle f,\psi\rangle|\geq\theta(\gamma)$, where $\Psi$ is a class of ``nice" functions (which will probably exhibit behaviour similar to that of quadratic phase funtions). 
\item If $A$ is a set of density $\a$, $f=A-\a$, and $|\langle f,\psi\rangle|\geq\theta$ for some $\psi\in\Psi$, then there is a long subprogression $P$ inside which $A$ has density at least $\alpha+\eta(\theta)$. 
\end{enumerate}
Implicit in the third of these conditions is that $|||.|||$ and $\Psi$ are related by the formula $\max\{|\langle f,\psi\rangle|:\psi\in\Psi\}$. 

The big problem we face is that there is no obvious reformulation of the $U^3$ norm analogous to the reformulation $\|f\|_{U^2}=\|\hat f\|_4$ of the $U^2$ norm. So we do not know of a candidate for $\|.\|$. However, that does not mean that there is nothing we can do, since there is still the possibility of passing directly from the statement that $\|f\|_{U^3}\geq c$ to the statement that $|\langle f,\psi\rangle|\geq\theta(c)$ for some suitably nice function $\psi$, or even bypassing this statement and heading straight for the conclusion that $A$ is denser in some long subprogression. Both approaches turn out to be possible. 

It is not possible here to do more than give a very brief sketch of how the proof works. We start with a function $f$ with $\|f\|_\infty\leq 1$ and $\|f\|_{U^3}^8\geq\g$. That inequality expands to the inequality
\begin{align*}
\E_{x,a,b,c}f(x)\overline{f(x-a)f(x-b)}&f(x-a-b)\overline{f(x-c)}\\
&f(x-a-c)f(x-b-c)\overline{f(x-a-b-c)}\geq\g,\\
\end{align*}
where we have switched from plus signs to minus signs for unimportant aesthetic reasons. We now define, for each $a$, a function $\partial_af$ by the formula $\partial_af(x)=f(x)\overline{f(x-a)}$, which allows us to rewrite the inequality above as
\[\E_a\E_{x,b,c}\partial_af(x)\overline{\partial_af(x-b)\partial_af(x-c)}\partial_af(x-b-c)\geq\gamma.\]
Now this is just telling us that $\E_a\|\partial_af\|_{U^2}^4\geq\theta$, from which it follows that there must be several $a$ for which $\|\partial_af\|_{U^2}$ is large. By the rough equivalence of the $U^2$ norm with the magnitude of the largest Fourier coefficient, we can deduce from this that several of the functions $\partial_af$ have at least one large Fourier coefficient. It follows that there is a large set $B$ and a function $\phi:B\to\Z_n$ such that $\widehat{\partial_af}(\phi(a))$ is large for every $a\in B$. More formally, we can obtain an inequality 
\[\E_a B(a)|\widehat{\partial_af}(\phi(a))|^2\geq\theta\]
for some $\theta$ that depends (polynomially) on $\gamma$. 

It turns out that one can perform some algebraic manipulations with this statement and eventually prove that the function $\phi$ has an interesting ``partial additivity" property, which states that there are at least $\eta n^3$ quadruples $(x,y,z,w)\in B^4$ (for some $\eta$ that depends on $\gamma$ only) such that
\[x+y=z+w\]
and
\[\phi(x)+\phi(y)=\phi(z)+\phi(w).\]
This property appears at first to be somewhat weak, since it tells us that $\phi$ is additive on only a small percentage of the quadruples $x+y=z+w$. Remarkably, however, this is another instance where a local assumption can be used to prove a global conclusion: the only way that $\phi$ can be this additive is if it has a form that can be described very precisely.

Recall the two-dimensional set we defined in the previous section. It is an example of a \emph{two-dimensional arithmetic progression}. More generally, a $k$-dimensional arithmetic progression is a set of the form
\[\{x+a_1d_1+a_2d_2+\dots+a_kd_k:0\leq a_i<m_i\}.\]
The numbers $d_1,\dots,d_k$ are the common differences and the numbers $m_1,\dots,m_k$ are the lengths. The arithmetic progression is called \emph{proper} if it has cardinality $m_1\dots m_k$ -- that is, no two of the $a_1d_1+\dots+a_kd_k$ coincide. 

Given such a progression and coefficients $\mu_0,\mu_1,\dots,\mu_k\in\Z_n$ one can define something like a linear form by the obvious formula
\[x+a_1d_1+a_2d_2+\dots+a_kd_k\mapsto\mu_0+\sum_i\mu_ia_i.\]
Let us call such a map \emph{quasilinear}. 

The result that tells us about the structure of $\phi$ is the following.

\begin{theorem}
For every $\eta>0$ there is an integer $d=d(\eta)$ and a constant $\zeta=\zeta(\eta)>0$ with the following properties. Let $B\subset\Z_n$ and suppose that there are $\eta n^3$ quadruples $(x,y,z,w)\in B^4$ with $x+y=z+w$ and $\phi(x)+\phi(y)=\phi(z)+\phi(w)$. Then there is a proper arithmetic progression $P$ of dimension at most $d$ and a quasilinear map $\psi:P\to\Z_n$ such that for at least $\zeta n$ values of $x\in\Z_n$ we have that $x\in B\cap P$ and $\phi(x)=\psi(x)$. 
\end{theorem}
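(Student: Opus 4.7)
The natural framework is to view $\phi$ through its graph $\Gamma=\{(x,\phi(x)):x\in B\}\subset\Z_n\times\Z_n$. The hypothesis translates precisely into the statement that the additive energy of $\Gamma$ in the abelian group $\Z_n^2$ satisfies $E(\Gamma)\geq\eta n^3$: the $\eta n^3$ quadruples listed are in bijection with the additive quadruples $((x,\phi(x)),(y,\phi(y)),(z,\phi(z)),(w,\phi(w)))\in\Gamma^4$. In particular $|\Gamma|=|B|\geq\eta^{1/3}n$ (since $E(\Gamma)\leq|\Gamma|^3$), so $E(\Gamma)\geq\eta|\Gamma|^3$. The first step is then to apply the Balog--Szemer\'edi--Gowers theorem, which upgrades this statistical additivity to genuine small doubling: one extracts $\Gamma_1\subset\Gamma$ with $|\Gamma_1|\geq c_1(\eta)n$ and $|\Gamma_1+\Gamma_1|\leq C_1(\eta)|\Gamma_1|$, where $c_1$ and $C_1$ depend polynomially on $\eta$. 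Informally, the ``statistical'' additivity of $\phi$ on $B$ has been refined to a combinatorial additivity on a substantial subset $B_1=\pi_1(\Gamma_1)$.

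The second step is to apply Freiman's theorem (in its abelian-group form, due to Ruzsa) to $\Gamma_1$: there is a proper multidimensional arithmetic progression $Q\subset\Z_n^2$ of dimension $d=d(\eta)$ with $|Q|\leq C_2(\eta)|\Gamma_1|$ and $\Gamma_1\subset Q$. Writing $Q=\{v_0+\sum_{i=1}^d a_iv_i:0\leq a_i<m_i\}$ with $v_i=(p_i,q_i)\in\Z_n^2$, the first-coordinate projection yields a multidimensional AP $P_0=\{p_0+\sum a_ip_i\}\subset\Z_n$ together with the obvious candidate quasilinear map sending $p_0+\sum a_ip_i$ to $q_0+\sum a_iq_i$. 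A pigeonholing/lattice-reduction argument inside the box $\prod_i[0,m_i)$ then produces a proper sub-progression $P$ of dimension at most $d$ on which $\pi_1$ is injective, making $\psi:P\to\Z_n$ unambiguously defined and quasilinear. Because the initial losses are all polynomial in $\eta$, a proportion $\zeta(\eta)>0$ of the points of $\Gamma_1$ survive this clean-up and lie in the graph $\{(x,\psi(x)):x\in P\}$; each such point contributes an $x\in B\cap P$ with $\phi(x)=\psi(x)$, yielding the required $\zeta n$ good values.

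The main obstacle is the Freiman step: one needs a version of Freiman's theorem for the ambient group $\Z_n^2$ with bounds good enough (polynomial-type, or at least quantitative) to keep both $d$ and the size overhead under control. The standard route is to use a Freiman $2$-isomorphism to transfer $\Gamma_1$ into a box inside $\Z\times\Z$, invoke Freiman's theorem over $\Z$ there, and pull the resulting progression back to $\Z_n^2$. The downstream bookkeeping --- converting the ambient progression $Q$ into a genuinely quasilinear graph over a proper sub-progression, arranging injectivity of $\pi_1$, and tracking the dependences of $c_1,C_1,C_2,d,\zeta$ on $\eta$ --- is technical but uses nothing beyond Pl\"unnecke--Ruzsa estimates and pigeonhole.
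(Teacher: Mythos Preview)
Your proposal is correct and matches the paper's approach: the paper does not give a detailed proof of this theorem but states that one must use Freiman's theorem (via Ruzsa's proof) together with the quantitative Balog--Szemer\'edi theorem from \cite{gowers01}, which is exactly the graph-plus-BSG-plus-Freiman route you outline. Your sketch is in fact a reasonable fleshing-out of what the paper merely gestures at.

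Two minor points worth tightening. First, the phrase ``the initial losses are all polynomial in $\eta$'' is accurate for the BSG step but not for Freiman--Ruzsa: the dimension $d$ and the size overhead $|Q|/|\Gamma_1|$ depend exponentially on the doubling constant, so $\zeta(\eta)$ will not be polynomial in $\eta$ with current technology. This does not affect the qualitative statement you are asked to prove. Second, the clean-up step---passing from the projected progression $P_0$ to a proper sub-progression $P$ on which $\pi_1$ is injective while retaining $\zeta n$ points of $\Gamma_1$---is genuinely where the work lies; it is doable (and is done in \cite{gowers01}), but ``pigeonholing/lattice-reduction'' undersells it somewhat. In practice one iteratively uses any collision $\sum a_ip_i=0$ to eliminate a generator or shrink the box, controlling the losses via Pl\"unnecke--Ruzsa, exactly as you say.
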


\noindent Loosely speaking, this tells us that there must be a quasilinear map that agrees a lot of the time with $\phi$. To prove this, one must use some important results in additive combinatorics, such as a famous theorem of Freiman \cite{freiman93} (and more particularly a proof of the theorem due to Ruzsa \cite{ruzsa94}) as well as a quantitative version \cite{gowers01} of a theorem of Balog and Szemer\'edi \cite{balogszem94}. 

Now let us see why it is plausible that linear behaviour of the function $\phi$ should lead to quadratic behaviour in the function $f$ from which it was derived. Consider an example where $f$ is defined by a formula of the form $f(x)=\omega^{\nu(x)}$. Then $\partial_af(x)=\omega^{\nu(x)-\nu(x-a)}$. So the statement that $\widehat{\partial_af}(\phi(a))$ is large is telling us that the functions $\omega^{\nu(x)-\nu(x-a)}$ and $\omega^{a\phi(x)}$ correlate well. Since $\phi$ exhibits linear behaviour, the function $(a,x)\mapsto a\phi(x)$ exhibits bilinear behaviour.

But that is exactly what happens when $\nu$ is a quadratic function: if $\nu(x)=rx^2+sx$, then $\nu(x)-\nu(x-a)=2rxa-ra^2+sa$, which implies that $\partial_af$ has a large Fourier coefficient at $2ra$. 

At this point one can use the information we have in a reasonably straightforward way to prove a weakish statement that is sufficient for Szemer\'edi's theorem, or we can work harder to prove a stronger statement that can be thought of as giving us some kind of quadratic Fourier analysis. The weakish statement (stated qualitatively) is the following.

\begin{lemma} \label{partition}
Let $f:\Z_n\to\C$ be a function with $\|f\|_\infty\leq 1$ and suppose that there exists a quasilinear function $\psi$ defined on a low-dimensional arithmetic progression $P$ such that $\widehat{\partial_af}(\psi(a))$ is large for many $a\in P$. Then there are long arithmetic progressions $P_1,\dots,P_m$ that partition $\Z_n$ and quadratic polynomials $q_1,\dots,q_m:\Z_n\to\Z_n$ such that 
\[n^{-1}\sum_i|\sum_{x\in P_i}f(x)\omega^{-q_i(x)}|\]
is bounded away from zero.
\end{lemma}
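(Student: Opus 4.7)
The plan is to reduce the statement to a one-dimensional linear model, in which the required quadratic polynomial can be built explicitly by ``integrating'' $\psi$, and then to pass back to $\Z_n$ by a Dirichlet-type pigeonhole argument on the common differences of $P$.

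I begin by unpacking the hypothesis. Let $B\subset P$ be the set of $a$ for which $|\widehat{\partial_a f}(\psi(a))|\geq c$; the assumption that ``many'' $a$ have this property means $|B|/|P|$ is bounded below, so that
\[\E_a B(a)\,\Big|\E_x f(x)\overline{f(x-a)}\omega^{-\psi(a)x}\Big|^2\gtrsim c'.\]
The task is to convert this correlation of a $\psi$-twisted derivative of $f$ into correlation of $f$ itself with a genuine quadratic phase on long APs.

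Consider first the model case $k=1$, $P=\Z_n$, $\psi(a)=\mu a$. Since $n$ is odd, choose $r\in\Z_n$ with $2r\equiv\mu\pmod n$ and set $g(x)=f(x)\omega^{-rx^2}$. A direct calculation gives
\[g(x)\overline{g(x-a)}=f(x)\overline{f(x-a)}\,\omega^{-\mu ax+ra^2},\]
so the autocorrelation $h(a):=\E_x g(x)\overline{g(x-a)}$ satisfies $|h(a)|=|\widehat{\partial_a f}(\psi(a))|$. A short calculation shows $\widehat h(s)=|\hat g(s)|^2$, so by Parseval $\E_a|h(a)|^2=\|\hat g\|_4^4$. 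The hypothesis therefore yields $\|\hat g\|_4^4\gtrsim c'$, and combined with $\|\hat g\|_4^4\leq\|\hat g\|_\infty^2\|\hat g\|_2^2\leq\|\hat g\|_\infty^2$ (using $\|g\|_2\leq 1$) we find some $s\in\Z_n$ with $|\hat g(s)|\gtrsim\sqrt{c'}$, that is,
\[\big|\E_x f(x)\omega^{-(rx^2+sx)}\big|\gtrsim\sqrt{c'}.\]
This proves the lemma in the model case, with $P_1=\Z_n$ and $q_1(x)=rx^2+sx$.

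For the general case $P=\{x_0+\sum a_i d_i\}$ and $\psi=\mu_0+\sum\mu_i a_i$ is quasilinear on a $k$-dimensional proper AP. The strategy is to find a single common difference $e\in\Z_n$ along which the model argument applies, and to partition $\Z_n$ into APs of common difference $e$. Apply simultaneous Dirichlet approximation to the $k$ ``velocities'' $d_1,\dots,d_k$ (and to the $\mu_i$) to obtain $e$ of size $\lesssim n^{1/(k+1)}$ such that, in the parameterization $x=y+je$ along an AP of common difference $e$, the quasilinear $\psi$ can be realized as a genuine linear function of $j$ modulo $n$. Partition $\Z_n$ into APs $P_i=\{y_i+je:0\leq j<\ell\}$ with $\ell\gtrsim n^{1/(k+1)}$; on each $P_i$ the model argument produces a quadratic polynomial $q_i\in\Z_n[x]$ with $|\sum_{x\in P_i}f(x)\omega^{-q_i(x)}|\gtrsim\sqrt{c'}\cdot|P_i|$ for a positive fraction of the $P_i$ (by pigeonhole averaging of the global hypothesis across the partition), giving the claimed bound
\[n^{-1}\sum_i\Big|\sum_{x\in P_i}f(x)\omega^{-q_i(x)}\Big|\gtrsim\sqrt{c'}.\]

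The principal obstacle is the partitioning-and-alignment step. The one-dimensional model is clean, but the pull-back of the multi-index quadratic form from $P$ to $\Z_n$ is a polynomial only in the intrinsic $(a_1,\dots,a_k)$-coordinates of $P$, not in the ambient $\Z_n$ coordinate, and these two coordinate systems are genuinely different. Aligning them on long pieces requires iterated Dirichlet approximation across the $k$ common differences, costs a factor of $n^{1/(k+1)}$ in the length of each piece, and introduces small drift errors that must be absorbed into the constant and linear terms of each $q_i$. This is analogous to, but appreciably more intricate than, the ``partition into APs on which a character is approximately constant'' lemma used in the proof of Roth's theorem, where only one common difference had to be handled and only approximate constancy of a \emph{linear} exponential was required.
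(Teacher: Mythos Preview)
The paper is a survey and does not actually prove this lemma: it states it and says only that it can be obtained ``in a reasonably straightforward way'' from the hypothesis, referring to \cite{gowers98,gowers01} for the details. So there is no proof in the paper to compare against directly, and I will simply assess your argument.

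Your one-dimensional model case ($P=\Z_n$, $\psi(a)=\mu a$) is correct and is exactly the right idea: integrating the linear frequency $\mu a$ to the quadratic $rx^2$ with $2r\equiv\mu$, then observing that the hypothesis becomes a lower bound on $\|\hat g\|_4^4$ via the identity $\E_a|h(a)|^2=\|\hat g\|_4^4$, is the heart of the matter.

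The general case, however, has a genuine gap, and it is not just a matter of filling in details. Your model argument relies essentially on computing $\E_a|h(a)|^2$ over \emph{all} of $\Z_n$ (this is where Parseval is used to get $\|\hat g\|_4^4$). In the general situation $\psi$ is defined only on $P$, so $h(a)$ is available only for $a\in P$, a set of density much less than $1$; you cannot take the Fourier transform of $h$ over $\Z_n$, and the identity $\widehat h(s)=|\hat g(s)|^2$ is no longer at your disposal. Restricting to shifts along a single direction $e$ does not repair this: it confines $a$ to a short interval in one parameter, not to all of $\Z_n$. Moreover, your Dirichlet step is muddled: $\psi$ is a function of the shift $a$, not of the point $x$, so parametrizing $x=y+je$ does nothing to $\psi$; and if what you want is $\psi(je)$ linear in $j$, that already holds for $e=d_1$ without any Dirichlet approximation. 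The phrase ``pigeonhole averaging of the global hypothesis across the partition'' does not address the real obstruction, which is that the hypothesis is global in $x$ but only \emph{partial} in $a$.

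The argument that actually works (as in \cite{gowers01}) proceeds differently. One first integrates $\psi$ to a \emph{quasiquadratic} $Q$ on the multidimensional progression $P$ itself, written in the intrinsic coordinates $(a_1,\dots,a_k)$; one shows that $f$ correlates with $\omega^{Q}$ on translates of (a large piece of) $P$ covering $\Z_n$; and only then does one partition each such piece into genuine one-dimensional arithmetic progressions on which $Q$ agrees, up to lower-order terms, with a $\Z_n$-quadratic $q_i$. The Dirichlet-type pigeonholing enters at this last stage, applied to the coefficients of $Q$ rather than to $\psi$, and it is this step that produces the partition $P_1,\dots,P_m$ in the conclusion. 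Your last paragraph correctly identifies this alignment as the principal obstacle, but the route you sketch to overcome it does not work as stated.
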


This tells us that on average $f$ correlates with quadratic phase functions on the arithmetic progressions $P_i$. From this result it turns out to be possible to deduce that there is a refined partition into smaller arithmetic progressions such that $f$ correlates on average with \emph{linear} phase functions, and then we are in essentially the situation we were in with Roth's theorem and can complete the proof of Szemer\'edi's theorem for progressions of length 4. 

This generalization to progressions of length 4 of the Fourier-analytic method of Roth was obtained by the author \cite{gowers98} and extended to progressions of all lengths in \cite{gowers01} (which includes a separate treatment of the length-4 case). 

\section{The inverse theorem for the $U^k$ norms}

From the point of view of generalizing Fourier analysis, however, Lemma \ref{partition} is unsatisfactory. Our previous deductions tell us that the hypothesis of the lemma holds when $f$ is a function with $\|f\|_\infty\leq 1$ and $\|f\|_{U^3}\geq c$, so the conclusion holds too. That gives us a lot of information about $f$, but it says nothing about how the quadratic polynomials $q_i$ might be related. It therefore gives us only local information about $f$, from which it is not possible to deduce a converse: just because $f$ correlates with quadratic phase functions on the progressions $P_i$, it does not follow that $\|f\|_{U^3}$ is large. (In fact, even constant functions do not do the job: if we were to choose for each $i$ a random $\e_i\in\{-1,1\}$ and set $f(x)$ to equal $\e_i$ everywhere on $P_i$, we would not have a function with large $U^3$ norm.) 

By contrast, if $\|f\|_{U^2}$ is large, then we obtain very simply that $\|\hat f\|_\infty$ is large, which tells us that $f$ correlates with a function of the form $\omega^{rx}$, and that, equally simply, implies that $\|f\|_{U^2}$ is large.

What we would really like is to get from the hypothesis of Lemma \ref{partition} to a more global conclusion, which would say that $f$ correlates with a generalized quadratic phase function of the kind described in the previous section. It is plausible that such a result should exist: from linear behaviour of the function $\phi$ one can deduce straightforwardly that $f$ correlates with a pure quadratic phase function, so if we have generalized linear behaviour (of a rather precise kind) then it seems reasonable to speculate that $f$ should correlate with a correspondingly generalized quadratic phase function. 

The main obstacle to proving this is that the function $(a,x)\mapsto a\phi(x)$ is not symmetric. If it were, then the proof would be fairly straightforward. However, Green and Tao found an ingenious ``symmetrization argument" that allowed them to deduce from the hypotheses of Lemma \ref{partition} a more symmetric set of hypotheses that yielded the desired result \cite{greentao08:2}. I shall state it somewhat imprecisely here. It is known as the inverse theorem for the $U^3$ norm.

\begin{theorem} \label{inverse}
For every $c>0$ there exists $c'>0$ with the following property. Let $f:\Z_n\to\C$ be a function with $\|f\|_\infty\leq 1$ and $\|f\|_{U^3}\geq c$. Then there exists a generalized quadratic phase function $g$ such that $\langle f,g\rangle\geq c'$. Conversely, every function that correlates well with a generalized quadratic phase function has a large $U^3$ norm.
\end{theorem}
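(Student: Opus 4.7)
The plan is to handle the two directions separately, with the forward implication requiring genuinely new ideas (following Green--Tao) and the converse being a softer application of the generalized Cauchy--Schwarz inequality for the $U^3$ norm. For the forward direction, I would begin by unfolding the hypothesis $\|f\|_{U^3}^8\geq c^8$ via the differencing operator $\partial_a f(x) = f(x)\overline{f(x-a)}$, rewriting it as $\E_a\|\partial_a f\|_{U^2}^4\geq c^8$. Since each summand is at most $1$, a Markov-type step produces a set $B\subset\Z_n$ of density $\gtrsim c^8$ on which $\|\partial_a f\|_{U^2}\gtrsim c^2$. Combined with the equivalence $\|g\|_{U^2}^4=\|\hat g\|_4^4\leq\|\hat g\|_\infty^2\|\hat g\|_2^2$, this yields for each $a\in B$ a frequency $\phi(a)\in\Z_n$ with $|\widehat{\partial_a f}(\phi(a))|$ bounded below, leading to the key Fourier-side inequality $\E_a B(a)|\widehat{\partial_a f}(\phi(a))|^2\geq\theta(c)$ from which the structural analysis discussed in the excerpt begins.

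The next step is to extract additive structure for $\phi$: a short physical-space expansion of this expression, followed by Cauchy--Schwarz applied to the resulting fourfold average, produces at least $\eta n^3$ quadruples $(x,y,z,w)\in B^4$ satisfying both $x+y=z+w$ and $\phi(x)+\phi(y)=\phi(z)+\phi(w)$. Invoking the structural theorem stated immediately before Theorem \ref{inverse} (whose proof relies on Balog--Szemer\'edi--Gowers and Freiman), I obtain a proper $d$-dimensional arithmetic progression $P$ and a quasilinear map $\psi:P\to\Z_n$ that coincides with $\phi$ on a $\zeta(\eta)$-fraction of $B\cap P$. This already places me in the hypothesis of Lemma \ref{partition}, and hence yields the \emph{local} conclusion that $f$ correlates with quadratic phase functions on the parts of some partition of $\Z_n$. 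The main obstacle, and the point at which real work is required, is upgrading this local statement to the \emph{global} conclusion that $f$ correlates with a single generalized quadratic phase function of the type built in Section \ref{quadfanal}. Here the central idea is Green and Tao's symmetrization argument: the bilinear object $(a,x)\mapsto a\phi(x)$ extracted from the data is not a priori symmetric in $a$ and $x$, whereas the derivative of a genuine quadratic is. By iterating the differencing trick to study $\partial_a\partial_b f$ and applying further Cauchy--Schwarz manipulations, one forces $\phi$ to satisfy an approximate symmetry relation, which, combined with another application of Freiman-type tools, allows $\psi$ to be ``integrated'' to a quadratic homomorphism on $P$ and then packaged as the desired generalized quadratic phase function $g$, with $\langle f,g\rangle$ bounded below by a polynomial in $c$. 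Producing and tracking this symmetrization is precisely where the difficulty sits.

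For the converse, it suffices to observe that every generalized quadratic phase function $g$ constructed as in Section \ref{quadfanal} has $\|g\|_{U^3}\geq\eta$ for some $\eta>0$ depending only on the density and dimension parameters of its supporting set, since the defining cube identity holds on at least $\eta^{8}n^4$ cubes. Given $|\langle f,g\rangle|\geq c'$ with $\|f\|_\infty,\|g\|_\infty\leq 1$, I would write $|\langle f,g\rangle|^2 = \E_a\langle\partial_a f,\partial_a g\rangle$, note that each $\partial_a g$ is (approximately) a linear phase function in $x$ because $g$ is quadratic-like, and apply Cauchy--Schwarz and Parseval to reduce to the lower bound $\E_a\|\partial_a f\|_{U^2}^4=\|f\|_{U^3}^8\gtrsim(c')^{O(1)}$. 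This yields a polynomial quantitative match to the forward implication, which is the standard form of an inverse theorem.
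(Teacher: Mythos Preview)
The paper does not actually prove Theorem \ref{inverse}; it is stated as a result of Green and Tao, with the surrounding text sketching the route (derivatives $\partial_a f$, large Fourier coefficients $\phi(a)$, the additive-quadruples step, the structural theorem via Balog--Szemer\'edi--Gowers and Freiman, and finally the ``symmetrization argument'' as the key missing ingredient). Your forward-direction outline tracks this sketch faithfully and correctly identifies the symmetrization as the genuine crux, so in that sense you are aligned with what the paper presents.

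Two points deserve correction. First, your claim that the forward implication produces $\langle f,g\rangle$ bounded below by a \emph{polynomial} in $c$ is not right: the passage through Freiman's theorem (and indeed the whole Green--Tao argument in \cite{greentao08:2}) incurs losses that are far from polynomial. Second, your converse is clean for a \emph{pure} quadratic phase---the identity $|\langle f,g\rangle|^2=\E_a\langle\partial_a f,\partial_a g\rangle$ is correct, and when $\partial_a g$ is a genuine character one does get $|\langle\partial_a f,\partial_a g\rangle|\leq\|\partial_a f\|_{U^2}$ and hence $\|f\|_{U^3}\geq c'$---but for a \emph{generalized} quadratic phase (supported on a multidimensional progression, say) the function $\partial_a g$ is only a ``quasilinear'' phase on a structured subset, not a character on $\Z_n$. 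Handling this is exactly why the paper remarks, in the $U^k$ discussion, that the converse is ``much easier'' than the forward direction but still ``more than just a simple exercise''. Your sketch would need an extra step here, typically a covering of the support by genuine progressions on which $\partial_a g$ really is a character, or a direct verification that such $g$ have bounded $U^3$-dual norm.
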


The main imprecision is of course that I have not said exactly what a generalized quadratic phase function is. There are in fact several non-identical ways of defining them and the theorem is true for each one. The way I presented them in the previous section (where the exponent is something like a quadratic form on a multidimensional arithmetic progression) is perhaps the easiest to understand for a non-expert, but it is not the most convenient to use in proofs. 

A natural question to ask at this point is what happens for the $U^k$ norm when $k\geq 4$. If one is aiming for a generalization of Lemma \ref{partition}, and thereby for a proof of Szemer\'edi's theorem, the case $k=4$ (which corresponds to arithmetic progressions of length 5) is significantly harder than the case $k=3$, and after that the difficulty does not increase further. As for the inverse theorem, one would like to show that a function with large $U^k$ norm correlates well with a generalized polynomial phase function of degree $k-1$, but it is far from easy even to come up with a satisfactory definition of what such a function should be. 

To give an idea of the difficulty, here are a few examples. Recall that the proof we have just discussed involved, in an essential way, ``quasilinear" functions. A typical example of such a function is defined as follows. First choose $x_1,\dots,x_k\in\Z_n$ and positive integers $r_1,\dots,r_k$ such that $r_1r_2\dots r_k$ is comparable to $n$ and the $x_i$ are independent, in the sense that all the sums $\sum_ia_ix_i$ with $0\leq a_i<r_i$ are distinct. Now, given coefficients $c_1,\dots,c_k\in\Z_n$ we can define a partial function $\phi$ on $\Z_n$ by setting
\[\phi(a_1x_1+\dots+a_kx_k)=c_1a_1+\dots+c_ka_k.\]
This resembles a linear functional on a $k$-dimensional vector space. 

Given a set-up like this, there are various ways that we might try to define ``quasiquadratic" functions. One, which we have already discussed in a special case, is to use a formula such as
\[q(a_1x_1+\dots+a_kx_k)=\sum_{i,j}c_{ij}a_ia_j.\]
But there are other ways of doing it. For example, if $\phi$ is a quasilinear function, then we can look at a function such as $x\mapsto x\phi(x)$ or $x\mapsto\phi(x^2)$. Thus, there are many ways of mixing ``quasiness" with addition and multiplication to create ``quasipolynomials".

Given a partially defined function $\psi$ defined on $\Z_n$, we can then create a corresponding phase function $f$ by taking $f(x)$ to be $\omega^{\psi(x)}$ when $\psi(x)$ is defined, and 0 otherwise. When $\psi$ is a quasipolynomial of degree $k$, these phase functions often have large $U^{k+1}$ norm.

A more convenient language for discussing such functions is that of \emph{bracket polynomials}. These are real-valued functions defined on $\Z$ (but one can restrict them to intervals) built out of the arithmetic operations on $\R$ and the integer-part function. (Of course, once we have the integer-part function $x\mapsto\lfloor x\rfloor$ we also have the fractional-part function $x\mapsto\{x\}=x-\lfloor x\rfloor$.) A typical ``quadratic" example is a function of the form $x\mapsto ax\lfloor bx\rfloor$. More formally, a polynomial of degree $k$ over $\R$ is a bracket polynomial, if $\phi$ is a bracket polynomial of degree $k$, then so are $-\phi$ and $\{\phi\}$, and the sum and product of two bracket polynomials of at most a given degree is a bracket polynomial of at most the degree that you would get with ordinary polynomials. 

Unfortunately, bracket polynomials are not very easy to work with. For example, if $\phi$ is a bracket polynomial of degree $k$ and we define $f:\Z_n\to\C$ by treating each $x\in\Z_n$ as an element of the set $\{0,1,\dots,n-1\}$ and setting $f(x)=e(\phi(x))$ (where $e(t)$ is shorthand for $\exp(2\pi it)$), then it is reasonable to conjecture that $f$ always has a large $U^{k+1}$ norm. However, this seems not to be known: the best that we have is a result of Tointon \cite{tointon14}, who proves it when the starting polynomials have no constant term, and his result is not easy. It is also not a direct proof in the language of bracket polynomials. 

However, bracket polynomials are closely connected with a class of functions called \emph{nilsequences}, first introduced in \cite{bergelsonhostkra05}, that are easier to handle. These are defined as follows. (There are some choices about the details here -- for the sake of exposition I have opted for choices that make the definition as simple as possible, but slightly different choices are made in the work described below.)

Given any group $G$, it is Abelian if and only if it is equal to its commutator subgroup: the subgroup generated by the commutators $[x,y]=xyx^{-1}y^{-1}$. It is \emph{2-step nilpotent} if its commutators belong to the centre: that is, if $x$ commutes with $[y,z]$ for every $y$ and $z$. If that is not the case, then we obtain non-trivial elements of the form $x[y,z]x^{-1}[y,z]^{-1}$. If these all belong to the centre, then $G$ is \emph{3-step nilpotent}, and so on. Nilpotent groups, which can be thought of as groups that are close to being Abelian, play a central role in additive combinatorics.

A key example of an $s$-step nilpotent group is the Heisenberg group (over $\R$, say), which consists of all real $(s+1)\times(s+1)$ that are zero below the diagonal and 1 on the diagonal. It is easy to check that if $A$ is such a matrix, and $B$ is another one but with the property that $B_{ij}=0$ whenever $1\leq j-i\leq k$, then $[A,B]_{ij}=0$ whenever $1\leq j-i\leq k+1$. This proves that the group is indeed $s$-step nilpotent. In particular, when $s=2$ we have $3\times 3$ matrices that are zero below the diagonal and 1 on it: the commutator of two such matrices is equal to the identity except that there may be a non-zero entry in the top right-hand corner, and those matrices belong to the centre of the group.

Now let $G$ be a connected and simply connected Lie group, and suppose that it is $s$-step nilpotent. A good example to bear in mind is the Heisenberg example above -- in fact, even the Heisenberg example with $s=2$. Let $\Gamma$ be a discrete subgroup such that the quotient $G/\Gamma$ (consisting of left cosets of $\Gamma$) is compact. Such a quotient is called an \emph{$s$-step nilmanifold}. In the Heisenberg example, an obvious choice for $\Gamma$ is the set of matrices with integer entries. 

The group $G$ acts on $G/\Gamma$ by left multiplication, so given $x\in G/\Gamma$ and $g\in G$, we can form a sequence of iterates $x, gx, g^2x, \dots$. If $F$ is a continuous function from $G/\Gamma$ to $\R$, then the sequence $F(x), F(gx), F(g^2x),\dots$ is an \emph{$s$-step nilsequence}.

To see how this relates to bracket polynomials, let us look at the Heisenberg example and perform a couple of calculations. First, it is easy to prove by induction that
\[\begin{pmatrix}1&u&0\\ 0&1&v\\ 0&0&1\\ \end{pmatrix}^n=
\begin{pmatrix}1&nu&\frac 12n(n-1)uv\\ 0&1&nv\\ 0&0&1\\ \end{pmatrix}.\]
In general, the $n$th power of an element of the Heisenberg group will have a degree-$d$ dependence on $n$ for entries that are $d$ steps away from (and above) the main diagonal. So polynomials arise naturally.

Another calculation shows that brackets also arise naturally. Every element of $G$ can be written uniquely as a product $gh$ where $h\in\Gamma$ and $g$ belongs to a \emph{fundamental domain}. An obvious example of a fundamental domain in the Heisenberg case consists of all matrices for which the entry above the diagonal belongs to the interval $[0,1)$. Suppose now that we have a matrix $\begin{pmatrix}1&x&z\\ 0&1&y\\ 0&0&1\\ \end{pmatrix}$ and we want to decompose it in this way. It is not hard to pick the integer matrix that does the job: one chooses the entries just above the diagonal first, and then the top right-hand entry. The result of this exercise is to observe that
\[\begin{pmatrix}1&x&z\\ 0&1&y\\ 0&0&1\\ \end{pmatrix}
\begin{pmatrix}1&-\lfloor x\rfloor&-\lfloor z-x\lfloor y\rfloor\rfloor\\ 0&1&-\lfloor y\rfloor\\ 0&0&1\\ \end{pmatrix}
=\begin{pmatrix}1&\{x\}&\{z-x\lfloor y\rfloor\}\\ 0&1&\{y\}\\ 0&0&1\\ \end{pmatrix}.\]
Note that the second matrix has integer entries and the entries above the diagonal in the third matrix are in the interval $[0,1)$. 

Combining these two observations, we find that the representative of the coset of the matrix $\begin{pmatrix}1&u&0\\ 0&1&v\\ 0&0&1\\ \end{pmatrix}^n$ in the fundamental domain has top right-hand entry equal to 
\[\{\frac 12n(n-1)uv-nu\lfloor nv\rfloor\}.\]
Here $u$ and $v$ are fixed real numbers, so we have obtained a bracket polynomial in $n$.

One way of converting this bracket polynomial into a 2-step nilsequence would be to take a Lipschitz function defined on $[0,1)$ but supported on $[0,1/2]$ and to take the sequence $(a_n)$, where $a_n=F(\{\frac 12n(n-1)uv-nu\lfloor nv\rfloor\})$.

We are now ready for a statement of the inverse theorem. It was formulated by Green and Tao in \cite{greentao10:1}: their formulation was strongly influenced by important work of Host and Kra \cite{hostkra05}, as was the paper of Bergelson, Host and Kra \cite{bergelsonhostkra05}, which was where a link between nilsequences and $U^k$-norms (or rather an ergodic-theoretic analogue of $U^k$-norms) was first established. The proof of the inverse theorem, which is a milestone in the subject, is due to Green, Tao and Ziegler \cite{greentaoziegler12}. It completed a programme of Green and Tao, set out in \cite{greentao10:1}, that generalized their famous result about arithmetic progressions in the primes \cite{greentao08:1} to a very wide class of linear configurations, and gave the correct asymptotics for each one (which did not follow, even for arithmetic progressions, from their earlier work). 

\begin{theorem} \label{inversetheorem}
For every positive integer $s$ and every $\d>0$ there exists a finite collection $\mathcal M$ of $s$-step manifolds, each with a Riemannian metric, and positive constants $C$ and $c$ with the following property. For every $N\geq 1$ and every function $f:\{1,2,\dots,N\}\to\C$ such that $\|f\|_\infty\leq 1$ and $\|f\|_{U^{s+1}}\geq\d$ there is a nilmanifold $G/\Gamma$ in $\mathcal M$, an element $g\in\Gamma$, and a function $F:G/\Gamma\to\C$ such that $\|F\|_\infty\leq 1$, the Lipschitz constant of $F$ is at most $C$ (with respect to the given Riemannian metric), and
\[|\mathbb E_{n\leq N}f(n)\overline{F(g^nx)}|\geq c.\]
\end{theorem}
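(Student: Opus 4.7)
The plan is to prove this by induction on $s$. The base case $s=1$ is the inverse theorem for the $U^2$ norm, which is essentially immediate: if $\|f\|_{U^2}\geq\d$ then by the identity $\|f\|_{U^2}=\|\hat f\|_4$ together with the trivial bound $\|\hat f\|_4^4\leq\|\hat f\|_\infty^2\|\hat f\|_2^2$ one obtains a large Fourier coefficient $\hat f(r)$, and the linear phase $\omega^{rx}$ is realized as a $1$-step nilsequence on the circle $\T=\R/\Z$. For the inductive step, assume the result holds for $s-1$ and suppose $\|f\|_\infty\leq 1$, $\|f\|_{U^{s+1}}\geq\d$.

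First I would exploit the differentiation identity
\[\|f\|_{U^{s+1}}^{2^{s+1}}=\E_a\|\partial_a f\|_{U^s}^{2^s},\qquad \partial_a f(x)=f(x)\overline{f(x+a)},\]
which, by averaging, gives a set $A\subset\{1,\dots,N\}$ of size $\gg\d^{O(1)}N$ such that $\|\partial_a f\|_{U^s}\gg\d^{O(1)}$ for every $a\in A$. The inductive hypothesis then supplies, for each $a\in A$, an $(s-1)$-step nilsequence $\chi_a(n)=F_a(g_a^n\cdot x_a)$ with $|\E_n\partial_a f(n)\overline{\chi_a(n)}|\gg\d^{O(1)}$. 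A pigeonholing argument lets us assume that $G_a/\Gamma_a$, $F_a$ and the Lipschitz data are drawn from a fixed finite list, so only the group element $g_a$ and base point $x_a$ genuinely depend on~$a$.

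The central and hardest step is to upgrade this parametric family to a single $s$-step nilobject. One runs a Cauchy-Schwarz/symmetrization scheme on the generalized Gowers inner product, analogous to the Green-Tao argument in the $U^3$ case, to show that the map $a\mapsto g_a$ satisfies an \emph{approximate cocycle} or \emph{quasi-homomorphism} identity: for $\gg\d^{O(1)}N^3$ quadruples $(a,b,c,d)\in A^4$ with $a+b=c+d$ one has $g_ag_b g_c^{-1}g_d^{-1}$ lying in a lower-order subgroup (the next term of the lower central series of $G_a$). Extracting genuine additive structure from this partial cocycle requires quantitative additive combinatorics in a nilpotent setting: a Balog-Szemer\'edi-Gowers theorem to promote the partial additivity to a set with small doubling, followed by a Freiman-type theorem adapted to nilpotent groups, so that $a\mapsto g_a$ looks, on a large set, like a polynomial map into a nilpotent Lie group. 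This is the main obstacle because the nilpotence forces the ``Fourier coefficients'' to live not in an Abelian group but in a noncommutative one, so the symmetrization that was delicate already for $U^3$ now has to be pushed through $s$ layers of commutators.

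Finally I would integrate the cocycle. Given the structural information on $a\mapsto g_a$, construct a new nilpotent Lie group $\tilde G$ as a central extension of a group isomorphic to $G_a$ by one extra generator encoding the $a$-dependence; the approximate additivity in $a$ is exactly what ensures that $\tilde G$ is $s$-step rather than higher-step nilpotent. Choose a cocompact $\tilde\Gamma\subset\tilde G$ and a Lipschitz function $\tilde F$ on $\tilde G/\tilde\Gamma$ so that taking the ``derivative in $a$'' of the orbit $\tilde g^n\cdot\tilde x$ reproduces $\chi_a(n)$; a routine but careful calculation then converts the family of correlations $|\E_n\partial_a f(n)\overline{\chi_a(n)}|\gg\d^{O(1)}$ on a positive proportion of $a$ into a single correlation $|\E_{n\leq N}f(n)\overline{\tilde F(\tilde g^n\cdot\tilde x)}|\geq c(\d)$. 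Throughout the argument one must track the complexity of the nilmanifold and the Lipschitz constant of $\tilde F$ quantitatively in $\d$ and $s$, which is what permits the collection $\mathcal M$ in the statement to be taken finite and depending only on $s$ and $\d$.
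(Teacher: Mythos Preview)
The paper does not contain a proof of this theorem. It is stated as a result from the literature, with the attribution ``The proof of the inverse theorem, which is a milestone in the subject, is due to Green, Tao and Ziegler \cite{greentaoziegler12}.'' The surrounding text discusses the strategy only in the $U^3$ case (the differentiation $\|f\|_{U^3}^8=\E_a\|\partial_af\|_{U^2}^4$, the partial additivity of $\phi$, Balog--Szemer\'edi--Gowers and Freiman, and the Green--Tao symmetrization), and then simply records that the general case was later handled by Green, Tao and Ziegler. So there is no ``paper's own proof'' to compare your proposal against.

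As to the proposal itself: the broad inductive architecture you describe (differentiate to drop a degree, apply the inductive hypothesis to $\partial_af$, extract approximate algebraic structure in the parameter $a$, then integrate/antidifferentiate to build an $s$-step nilobject) is indeed the shape of the Green--Tao--Ziegler argument, and it matches the $U^3$ discussion in the paper. However, two points deserve flagging. First, your last sentence asserts that ``throughout the argument one must track the complexity \dots\ quantitatively in $\d$ and $s$''. The paper explicitly warns that the Green--Tao--Ziegler proof ``yields no bound at all'' and is infinitary; the actual proof passes to an ultralimit (nonstandard) setting precisely to avoid this quantitative tracking, and a finitary rewrite is expected to be both laborious and to give very weak bounds. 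So the quantitative gloss in your sketch does not reflect how the theorem is actually proved. Second, the step you call ``the main obstacle'' is understated: the symmetrization and the Freiman-type step in the nilpotent/nonabelian setting for general $s$ are not routine extensions of the $U^3$ case, and formulating the right notion of a ``degree-$(s-1)$ nilcharacter'' to serve as $\chi_a$, together with the equidistribution input needed to make the pigeonholing and the cocycle integration work, is a substantial part of the Green--Tao--Ziegler paper that your outline elides.
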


\noindent To put this less formally, if $f$ has a large $U^{s+1}$-norm, then it must correlate well with an $s$-step nilsequence, where the nilmanifold comes from some finite collection of nilmanifolds and the Lipschitz constant of the function $F$ is not too large with respect to some sensible metric. To put it even less formally, functions with a large $U^{s+1}$-norm correlate with $s$-step nilsequences.

An important remark is that the converse holds as well: if a function takes bounded values and correlates with an $s$-step nilsequence satisfying the above condition, then it has a large $U^{s+1}$-norm. This is a much easier result, though it is more than just a simple exercise: it was proved by Green and Tao in \cite{greentao08:2}. Thus, the inverse theorem really does \emph{characterize} functions with large $U^{s+1}$-norm. 

Right back at the beginning of Section \ref{quadfanal} I said that although quadratic Fourier analysis lacked an inversion formula, it had a weaker property that was adequate for several applications. That property is Theorem \ref{inversetheorem}, the inverse theorem, so in fact the remark applies to degree-$s$ Fourier analysis for all $s$. 

One way to see that an inverse theorem is sometimes enough is to look back at the proof of Roth's theorem. Although we used the inversion formula -- that is, the statement that a function can be uniquely decomposed as a linear combination of trigonometric functions -- all that we actually needed for the proof was to be able to show somehow that a bounded function with large $U^2$-norm correlated well with at least one trigonometric function. The structure of trigonometric functions was then enough to allow us to find increased density on a subprogression. The inverse theorem for the $U^{s+1}$-norm can be used in a similar (but more complicated) way to yield another proof of Szemer\'edi's theorem for progressions of length $s+2$: this was shown by Green and Tao in \cite{greentao10:2}.

Another reason that inverse theorems can be regarded as a substitute for Fourier analysis in this context is that they often lead to useful decomposition theorems. Roughly speaking, if an inverse theorem for a norm $\|.\|$ shows that every bounded function with a large norm must correlate with a function from a set $\mathcal F$, then one can deduce from it that every function can be written as a linear combination of elements of $\mathcal F$, with the absolute values of the coefficients having not too large a sum, plus a function with small norm, plus an ``error" function that typically has a small $L_p$ norm for some $p$ such as 1 or 2. One way of proving this, due to Green and Tao \cite{greentao08:2}, is modelled on arguments from ergodic theory: if $\|f\|$ is large one uses the inverse theorem to find a function $F\in\mathcal F$ that correlates well with $f$, defines a ``sigma-algebra" with respect to which $F$ is ``approximately measurable", and then repeats the process with $f-Pf$, continuing until the desired decomposition is achieved. Another approach uses the Hahn-Banach theorem to obtain a contradiction of the inverse theorem if the desired decomposition does not exist: see \cite{gowerswolf12,gowerswolf11:1} for some applications of this idea, and \cite{gowers10} for a general discussion of the method.

\subsection{What more could one ask for?} {\ }
\medskip

The inverse theorem of Green, Tao and Ziegler is a major highlight of additive combinatorics, with a major application to analytic number theory. It might seem a little greedy to ask for more, but there is nevertheless a feeling that the story of the inverse theorem is not yet finished. There are two main reasons for this. The first is that the proof yields no bound at all. It is likely that one could rewrite the proof to make it finitary, but the resulting bound would be very weak and would not justify the significant amount of work that would be needed to do this. (It may seem paradoxical that a non-quantitative result can be used to obtain asymptotics for configurations in the primes: the point is that these asymptotics are accurate to within a $1+o(1)$ factor and we know nothing about the rate of convergence of the $o(1)$ part.)

A second reason is that there is something rather non-canonical about the statement of the inverse theorem. As we have already remarked, conventional Fourier analysis gives a unique decomposition of a function as a linear combination of trigonometric functions, while for higher-degree Fourier analysis we do not have any notion of a ``Fourier transform". But even the statement of the inverse theorem, with a Lipschitz function that is often chosen rather arbitrarily, and a nilpotent Lie group that sometimes feels a little cooked up to yield something like a bracket polynomial, which itself is not a very natural object, does not give one the feeling that it is ``from the book". 

A programme of Szegedy aims to remedy the second of these ``defects" but not the first. In a series of papers \cite{szegedy10, camarenaszegedy12, szegedy12}, one joint with Omar Camarena, he works with abstract structures that he calls nilspaces, which are variants of abstract parallelepiped structures introduced by Host and Kra \cite{hostkra08} (abstracting out certain arguments from \cite{hostkra05}). Such structures can be thought of as the most general structures for which one can make sense of uniformity norms, and are therefore a natural setting for thinking about inverse theorems. (This point of view is one way of explaining why nilpotent groups enter the theory when the original questions are about Abelian groups. It turns out that a group does not have to be Abelian to be a suitable host for abstract parallelepiped structures.) 

Szegedy's approach is decidedly infinitary -- indeed, the thinking behind it is that when one passes to suitable limiting objects, much of the ``mess" that makes the theory difficult disappears. Unfortunately, it has proved to be extremely hard to check the correctness of the arguments in the three papers, which, if all the details can be completed and checked, would give a different and in some ways more natural proof of the inverse theorem. At the time of writing, various people are working to produce clearer and more complete versions of the argument \cite{candela16:1, candela16:2, gutmanmannersvarju16:1, gutmanmannersvarju16:2, gutmanmannersvarju16:3}: it seems likely that Szegedy's ideas are fundamentally correct and that this is indeed an interesting alternative approach.

The hope, however, would be that out there is a much simpler proof (and statement) of the inverse theorem that yields good bounds. Perhaps a Grothendieck-like figure will one day find the right abstract framework that will make the difficulties melt away. One can at least dream.

\section{Hypergraphs}

A graph is a collection of pairs of elements of a set. What happens if we generalize from pairs to triples and beyond? A $k$-\emph{uniform hypergraph} is a set $X$ and a subset of $X^{(k)}$, where $X^{(k)}$ denotes the set of all subsets of $X$ of size $k$. In this section I shall concentrate on the case $k=3$, though it should be fairly clear how to generalize what I say to higher values.

Just as it is natural, when one thinks about graphs in an analytic way, to think of them as special kinds of matrices, or functions of two variables, so hypergraphs can be thought of as functions of three variables. Furthermore, there is a natural three-variable analogue of the box norm that we saw earlier. It is given by the following formula.
\begin{align*}
\|f\|_{\square^3}^8=\E_{x,x',y,y',z,z'}f(x,y,z)&\overline{f(x,y,z')f(x,y',z)}f(x,y',z')\\
&\overline{f(x',y,z)}f(x',y,z')f(x',y',z)\overline{f(x',y',z')}.\\
\end{align*}
As usual, one can define a corresponding box inner product by using eight different functions instead of just one, the inner product satisfies a Cauchy-Schwarz-type inequality, and that inequality can be used to prove that the norm really is a norm. Now let us look at some further useful facts about the box norm.

There is enough similarity between the formula for the box norm and the formula for the $U^3$ norm for it to be highly plausible that there should be a close relationship between them. And indeed there is. Let $G$ be a finite Abelian group, let $f:G\to\C$ be some function, and define a three-variable function $F:G^3\to\C$ by $F(x,y,z)=f(x+y+z)$. It is easy to check directly from the formula that $\||F\|_{\square^3}=\|f\|_{U^3}$. (A similar relationship can also be shown between the two-dimensional box norm and the $U^2$ norm.) 

It is a little surprising, therefore, that one can prove rather easily an inverse theorem for the box norm. As we shall see, however, the information it gives us is not strong enough to allow us to deduce from it the inverse theorem for the $U^3$ norm.

Let $X$ be a finite set and let $f:X^3\to\C$ be a function with $\|f\|_\infty\leq 1$ and $\|f\|_{\square^3}\geq c$. The second inequality tells us that 
\begin{align*}
\E_{x,x',y,y',z,z'}f(x,y,z)\overline{f(x,y,z')f(x,y',z)}&f(x,y',z')\\
&\overline{f(x',y,z)}f(x',y,z')f(x',y',z)\overline{f(x',y',z')}\geq c^8.\\
\end{align*}
By averaging, there must exist $x',y',z'$ such that
\begin{align*}
|\E_{x,y,z}f(x,y,z)\overline{f(x,y,z')f(x,y',z)}&f(x,y',z')\\
&\overline{f(x',y,z)}f(x',y,z')f(x',y',z)\overline{f(x',y',z')}|\geq c^8.\\
\end{align*}
We can think of the left-hand side as the modulus of the inner product of $f$ with the function $g$, given by the formula
\[g(x,y,z)=f(x,y,z')f(x,y',z)\overline{f(x,y',z')}f(x',y,z)\overline{f(x',y,z')f(x',y',z)}f(x',y',z').\]
The interesting thing about $g$ is that it is a product of functions each of which depends on at most two of the variables $x,y,z$. Thus, we find that if $\|f\|_{\square^3}$ is large, then it correlates with a function of ``lower complexity". The analogue of these low-complexity functions for matrices is the matrices of the form $u\otimes v$ -- that is, the matrices of rank 1.

However, in the two-variable case we have more. For Hermitian matrices we have a decomposition of the form $\sum_i\lambda_iu_i\otimes\overline{u_i}$, where $(u_i)$ is an orthonormal basis, and in general we have a singular-value decomposition $\sum_i\lambda_iu_i\otimes\overline{v_i}$, where $(u_i)$ and $(v_i)$ are both orthonormal. If $u,v$ and $w$ are functions of two variables, let us write $[[u,v,w]]$ for the function whose value at $(x,y,z)$ is $u(x,y)v(y,z)w(z,x)$. Then the very simple inverse theorem just proved tells us that a function with large box norm correlates with a function of the form $[[u,v,w]]$, but what we do not seem to have is a canonical way of decomposing an arbitrary function as a sum of the form $\sum_i\lambda_i[[u_i,v_i,w_i]]$.

What happens if we try to deduce the inverse theorem for the $U^3$ norm from the inverse theorem for the box norm in three variables? If $\|f\|_{U^3}\geq c$, then the argument gives us functions $f_1,\dots,f_6$, all of $\ell_\infty$ norm at most 1, such that
\[|\E_{x,y,z}f(x+y+z)\overline{f_1(x+y)f_2(y+z)f_3(z+x)}f_4(x)f_5(y)f_6(z)|\geq c^8.\]
However, it does not tell us anything much about the structure of the functions $f_1\dots,f_6$. It is possible to deduce from the inequality above that they have quadratic structure, and that the inverse theorem therefore holds, but the proof is no easier than the proof of the inverse theorem was already -- it just uses the same general approach in an unnecessarily complicated way.

Despite this, the theory of hypergraphs has been important and useful in additive combinatorics. I will not explain why here, except to mention a theorem about hypergraphs that turns out to imply a multidimensional version of Szemer\'edi's theorem. It is known as the \emph{simplex removal lemma}. (The implication, observed by Solymosi \cite{solymosi04}, is fairly straightforward, but slightly too long to give here.) Define a \emph{simplex} in a $k$-uniform hypergraph $H$ to be a set of $k+1$ vertices such that any $k$ of them form an edge $H$. (The word ``edge" here means one of the sets of size $k$ that belongs to $H$. When $k=2$, a simplex is a triangle.) The following result is due to Nagle, R\"odl, Schacht and Skokan \cite{rodlskokan04, naglerodlschacht06}, and independently to the author \cite{gowers07}. (See also \cite{tao06}.)

\begin{theorem}
For every $c>0$ and positive integer $k$ there exists $a>0$ with the following property. If $H$ is a $k$-uniform hypergraph with $n$ vertices that contains at most $an^{k+1}$ simplices, then it is possible to remove at most $cn^k$ edges from $H$ to create a $k$-uniform hypergraph that contains no simplices at all.
\end{theorem}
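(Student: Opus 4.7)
The strategy I would follow is the three-step template (\emph{regularize}, \emph{clean}, \emph{count}) that generalizes the Ruzsa--Szemer\'edi triangle removal lemma, which is precisely the $k=2$ case of this theorem and which follows in a few lines from Szemer\'edi's graph regularity lemma together with the standard graph counting lemma. For general $k$ the main new ingredient is a \emph{hypergraph regularity lemma} that is strong enough to support a matching counting lemma for simplices, and the bulk of the work goes into formulating the right such pair.

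Concretely, I would first build a hierarchical partition. For each $2 \le j \le k-1$, partition $X^{(j)}$ into a bounded number of blocks, with the level-$j$ partition refining the cells obtained from the level-$(j-1)$ partition. The key quasirandomness requirement, imposed level by level, is that the characteristic function of $H$ --- and at intermediate levels the characteristic function of each block --- should have small box-type norm \emph{relative to} the lower-order structure on which it sits. The appropriate norm is the natural $k$-variable generalization of the $\|\cdot\|_{\square^3}$ norm from the preceding section, measuring the density of octahedra supported on a given lower-order cylinder. Such a partition is produced by the standard energy-increment argument: whenever a block fails to be quasirandom in this relative sense, one refines using the functions witnessing the failure, with the mean-square energy bounded above by $1$, forcing the procedure to terminate in a number of steps depending only on the regularity parameters.

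Given such a regular partition, I would then apply a \emph{counting lemma}: if the $k+1$ cylinders on which a potential simplex is supported are all regular and each have relative density bounded below by some $\eta>0$, then the number of simplices using these cylinders is close to the product of their densities times the number of available vertex tuples, and in particular is at least a positive constant (depending on $\eta$ and the number of blocks) times $n^{k+1}$. The cleaning step is now routine: remove every $k$-edge that lies in a cylinder which is either irregular or of relative density below $\eta$. Choosing the regularity parameter and $\eta$ small enough in terms of $c$ guarantees that only at most $cn^k$ edges are removed. Any simplex surviving the cleanup has all of its edges in regular dense cylinders, and the counting lemma then forces $H$ to have contained at least $an^{k+1}$ simplices for some computable $a>0$; choosing the threshold in the hypothesis smaller than this $a$ yields the contrapositive of the theorem.

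The main obstacle, by a wide margin, is making this regularity/counting pair actually work. The naive generalization --- asking only that $H$ be quasirandom as a function on $X^{(k)}$ with respect to an unstructured partition of vertices --- fails badly: there exist hypergraphs with small naive box norm and the ``wrong'' number of simplices, echoing the fact from earlier in the paper that small $U^2$ norm does not control progressions of length $4$. One therefore has to regularize $H$ \emph{relative to} an already-regular partition of $(k-1)$-subsets, which is itself regular relative to a partition of $(k-2)$-subsets, and so on, and one must simultaneously prove a counting lemma at each layer using the box-norm Cauchy--Schwarz machinery applied iteratively. Orchestrating this hierarchy --- in particular ensuring that the parameters at each level are chosen small enough in terms of those above it so that the counting lemma goes through --- is the technical heart of the argument, and is the reason the theorem for general $k$ came so long after its $k=2$ predecessor.
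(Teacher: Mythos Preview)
The paper does not actually prove this theorem: it is stated as a result due to Nagle, R\"odl, Schacht and Skokan and independently to the author, with references given, and the surrounding discussion merely comments on the $k=2$ case and the poor quantitative bounds. So there is no ``paper's own proof'' to compare against.

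That said, your proof plan is the right one and is, at the level of detail you give, an accurate high-level summary of the approach in the cited references (particularly the author's own). The three-step template (regularize hierarchically with relative box-type quasirandomness at each level, clean out irregular and sparse cylinders, then apply a counting lemma to force many simplices in what remains) is exactly how the argument goes. You have also correctly identified the genuine difficulty: a single-level partition with ordinary quasirandomness is insufficient to control simplex counts, and one must regularize each level relative to the levels below, with the parameters at each level chosen in terms of those above. One small caveat: the energy-increment at level $j$ must be carried out \emph{after} the level-$(j-1)$ structure is already in place, and the interaction between the number of blocks produced at one level and the regularity tolerance required at the next is what drives the tower-type (and worse) bounds --- you allude to this, but in a full write-up the order in which the parameters are chosen is the most delicate bookkeeping point.
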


The case $k=2$, which pioneered this combinatorial approach to Szemer\'edi's theorem, was proved by Ruzsa and Szemer\'edi much earlier \cite{ruzsaszem78}. In this case the result says that a graph with few triangles is close to a graph with no triangles. Rather surprisingly, even this case is not straightforward. In particular, the best known dependence of $a$ on $c$ is extremely weak: its reciprocal is a tower of 2s of height proportional to $\log(1/c)$ \cite{fox11}. Even a bound of the form $\exp(-(1/c)^A)$ for some fixed $A>0$ would be a major improvement.

\section{Fourier analysis on non-Abelian groups}

The following result is easy to prove. We say that a subset of an Abelian group is \emph{sum free} if it contains no three elements $x,y,z$ with $x+y=z$.

\begin{theorem}
There exists a constant $c>0$ such that every finite Abelian group $G$ has a subset $A$ of cardinality at least $c|G|$ that is sum free.
\end{theorem}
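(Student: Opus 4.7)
The strategy is to reduce the problem to cyclic groups via a quotient, and then write down an explicit sum-free subset of every cyclic group of density at least some absolute constant.

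First I record a \emph{pullback principle}: if $\phi\colon G\to H$ is a surjective homomorphism of finite Abelian groups and $S\subseteq H$ is sum-free in $H$, then $\phi^{-1}(S)$ is sum-free in $G$, and has the same density $|S|/|H|$. Sum-freeness is immediate because $x+y=z$ in $G$ projects to $\phi(x)+\phi(y)=\phi(z)$ in $H$; and equality of densities follows because every fibre of $\phi$ has size $|G|/|H|$.

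Second, I invoke the structure theorem for finite Abelian groups: writing $G\cong \Z_{n_1}\oplus\cdots\oplus\Z_{n_k}$ with each $n_i\geq 2$, projection onto any coordinate gives a surjection $\phi\colon G\to\Z_n$ with $n:=n_1\geq 2$. (The case $|G|=1$ is vacuous with $A=\emptyset$.) So it suffices to find a sum-free subset of $\Z_n$ of density at least some absolute constant $c>0$, for every $n\geq 2$.

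Third, I take the \emph{middle-third} set
\[S:=\{k\in\Z_n : n/3<k\leq 2n/3\}.\]
To check sum-freeness, observe that if $j,k\in S$ then $2n/3<j+k\leq 4n/3$ as ordinary integers, so modulo $n$ the sum lies in $(2n/3,n]\cup(0,n/3]$, a set disjoint from $(n/3,2n/3]$; in particular this also rules out the diagonal case $j=k$ (i.e.\ $2j\in S$). Counting integers in the half-open interval $(n/3,2n/3]$ gives $|S|=\lfloor 2n/3\rfloor-\lfloor n/3\rfloor$, which one checks case-by-case on $n\bmod 3$ is at least $\lfloor n/3\rfloor$, giving density $|S|/n\geq 1/4$ for every $n\geq 2$ (the minimum is attained at $n=4$; the density tends to $1/3$ as $n\to\infty$). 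Taking $c=1/4$ and pulling $S$ back through $\phi$ produces the required sum-free $A\subseteq G$.

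There is no real obstacle here; the only step that needs verification is the interval arithmetic in Step 3, showing the middle third avoids its own sumset modulo $n$. One can sharpen the constant (for instance, using the coset of odd residues when $n$ is even gives density $1/2$, leading to an overall $c=2/7$), but this is not needed for the stated qualitative theorem.
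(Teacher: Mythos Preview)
Your proof is correct and follows essentially the same route as the paper: both use the structure theorem to project onto a cyclic factor and then take the middle-third interval there as the sum-free set. You have simply spelled out the details (the pullback principle, the interval arithmetic, and the explicit constant $c=1/4$) that the paper leaves as a one-line remark.
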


To see this, let $\Z_m$ be one of the cyclic groups of which $G$ is a product, and take all elements whose coordinate in this copy of $\Z_m$ lies between $m/3$ and $2m/3$ (and strictly between on one of the two sides). 

Babai and S\'os asked whether a similar result held for general finite groups \cite{babaisos85}. They expected the answer no, but it turns out not to be completely obvious how to disprove it.

Given that the result holds for Abelian groups, it is natural to look at groups that are ``highly non-Abelian". This can be measured in various ways. One is to look at the sizes of conjugacy classes. If a group $G$ is Abelian, then all its conjugacy classes are singletons, so if a group has large conjugacy classes, then that is saying that in some sense it is far from Abelian: not only are the conjugates $gxg^{-1}$ not all equal to $x$, they are not even concentrated in a small subset of the group.

Another property that characterizes Abelian groups is that all their irreducible representations are one-dimensional. So another potential way of measuring non-Abelianness is to look at the lowest dimension of an irreducible representation. 

Since we have already made use of characters of finite Abelian groups -- that is, their irreducible representations -- and since we are trying to count solutions to a simple equation in a dense subset of a group, the second measure looks promising. And it does indeed turn out to be possible to solve this problem by using a more general Fourier analysis, in which characters are replaced by more general irreducible representations. 

The definition of the Fourier transform of a function $f:G\to\C$ is more or less the first thing one writes down. If $\rho:G\to U(k)$ is an irreducible unitary representation of $G$, then 
\[\hat f(\rho)=\E_xf(x)\overline{\rho(x)}.\]
(Another candidate for the definition would be as above but with the conjugate $\overline{\rho(x)}$ replaced by the adjoint $\rho(x)^*$, but the conjugate turns out to be more convenient.)

For this to be a useful definition, we would like it to satisfy natural analogues of the basic properties of the Abelian Fourier transform. And indeed it does. Parseval's identity, for example, takes the following form. If $f$ and $g$ are functions from $G$ to $\C$, then
\[\E_x f(x)\overline{g(x)}=\sum_\rho n_\rho\tr(\hat f(\rho)\hat g(\rho)^*),\]
where the sum is over all irreducible representations and for each such representation $\rho$ its dimension is $n_\rho$. Let us briefly see how this is proved. We have
\begin{align*}
\sum_\rho n_\rho\tr(\hat f(\rho)\hat g(\rho)^*)&=\sum_\rho n_\rho\E_{x,y}f(x)\overline{g(y)}\overline{\tr(\rho(x)\rho(y)^*)}\\
&=\E_{x,y}f(x)\overline{g(y)}\sum_\rho n_\rho\overline{\tr(\rho(x)\rho(y)^*)}\\
\end{align*}
We now use a fundamental orthogonality result from basic representation theory, which states that $\sum_\rho n_\rho\tr(\rho(x)\rho(y)^*)=n$ if $x=y$ and 0 otherwise. It follows that 
\[\E_{x,y}f(x)\overline{g(y)}\sum_\rho n_\rho\overline{\tr(\rho(x)\rho(y)^*)}=\E_xf(x)\overline{g(x)}\]
and the proof is complete.

How about the convolution identity? It states, as we would hope, that
\[\widehat{f*g}(\rho)=\hat f(\rho)\hat g(\rho)\]
for any two functions $f,g:G\to\C$ and any irreducible representation $\rho$. Again it is instructive to see the proof. We have
\begin{align*}
\widehat{f*g}(\rho)&=\E_x(f*g)(x)\overline{\rho(x)}\\
&=\E_x\E_{uv=x}f(u)g(v)\overline{\rho(x)}\\
&=\E_{u,v}f(u)g(v)\overline{\rho(u)\rho(v)}\\
&=(\E_uf(u)\overline{\rho(u)})(\E_vg(v)\overline{g(v)})\\
&=\hat f(\rho)\hat g(\rho).\\
\end{align*}
Note that we used the fact that $\overline{\rho(uv)}=\overline{\rho(u)\rho(v)}$ in the proof above. Had we defined the Fourier transform using adjoints, we would have had to use instead the fact that $\rho(uv)^*=\rho(v)^*\rho(u)^*$, so we would have obtained the identity $\widehat{f*g}(\rho)=\hat g(\rho)\hat f(\rho)$.

The last property I want to discuss is the inversion formula. Here we have what looks at first like a puzzle: in the Abelian case we decomposed functions as linear combinations of characters, but irreducible representations are matrix-valued functions of different dimensions, so we cannot express scalar-valued functions as linear combinations of them.

There is of course a natural way of converting a matrix-valued function into a scalar-valued function, and that is to take the trace. Moreover, traces of representations are well known to be important functions -- they are characters in the sense of representation theory. 

So can we decompose a function as a linear combination of functions of the form $\chi(x)=\tr(\rho(x))$? No we cannot, since such functions are constant on conjugacy classes. (We can, however, decompose functions if they \emph{are} constant on conjugacy classes -- such functions are called \emph{class functions}.) In fact, since there are not $n$ inequivalent irreducible representations (except when the group is Abelian), there is no hope of writing down some scalar-valued functions $u_\rho$ and expanding every $f$ as a linear combination of the $u_\rho$.

However, we shouldn't necessarily expect to be able to do so. We would like the coefficients in our inversion formula to be the matrices $\hat f(\rho)$ in some suitable sense. And once we make that our aim, it is a short step to writing down the following slightly subtler formula.
\[f(x)=\sum_\rho n_\rho\tr(\hat f(\rho)\overline{\rho(x)^*}).\]
This can be verified easily using the orthogonality property we used earlier.

It is not hard to check that this formula specializes to the formula given earlier when the group is Abelian. One way of making it look more like that formula is to define $\hat G$ to be the set of all irreducible representations of $G$ (up to equivalence), to define $M(\hat G)$ to be the set of all matrix-valued functions $\hat f$ on $\hat G$ such that $\hat f(\rho)$ is an $n_\rho\times n_\rho$ matrix for every $\rho$, and to define an inner product on $M(\hat G)$ by the formula
\[\langle \hat f,\hat g\rangle = \sum_\rho n_\rho\langle\hat f(\rho),\hat g(\rho)\rangle,\]
where the inner product on the right-hand side is the matrix inner product $\langle A,B\rangle=\tr(AB^*)=\sum_{i,j}A_{ij}\overline{B_{ij}}$. Note that Parseval's identity now becomes the usual formula $\langle f,g\rangle=\langle\hat f,\hat g\rangle$. As for the inversion formula, it can be written as follows.
\[f(x)=\langle \hat f,\d_x^*\rangle,\]
where $\d_x^*$ is the evaluation function $\rho\mapsto\rho(x)^*$. The right-hand side can be expanded to $\sum_\rho n_\rho\langle\hat f(\rho),\overline{\rho(x)}\rangle$, which is equal to $\sum_\chi\hat f(\chi)\chi(x)$ when $G$ is Abelian.

Now let us prove an inequality, Lemma \ref{bnp} below, that allows us to solve the problem with which we started. It appears in equivalent form as Lemma 3.2 in \cite{gowers08} (strictly speaking that lemma is very slightly less general, but it is sufficient for applications to characteristic functions of sets and the proof carries through with hardly any changes for general functions). The formulation below is due to Babai, Nikolov and Pyber \cite{babainikolovpyber08}, who gave a different argument. Here we give a short Fourier-analytic argument that is different again. After the appearance of the paper \cite{gowers08}, the existence of such an approach seems to have been realized by various people and become a piece of modern folklore: I heard that it could be done by Ben Green, and Terence Tao gives it as an exercise in a blog post on non-Abelian Fourier analysis (\url{https://terrytao.wordpress.com/2011/12/16/254b-notes-3-quasirandom-groups-expansion-and-selbergs-316-theorem/}).

Before we give the lemma and explain how the problem of Babai and S\'os can be solved, we need a couple of simple results about matrices. Note that because we are on the Fourier side, we are dealing with sums rather than expectations. In particular, we are using the standard notion of matrix multiplication, and the box norm will be defined using sums.

\begin{lemma}
Let $A$ and $B$ be square matrices. Then $\|AB\|_{HS}\leq\|A\|_\square\|B\|_\square$.
\end{lemma}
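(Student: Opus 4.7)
The plan is to expand both sides, group the sums suitably, and apply the ordinary Cauchy--Schwarz inequality once. Specifically, since we are working on the Fourier side with sums, we have $\|AB\|_{HS}^2 = \sum_{x,z} |AB(x,z)|^2$ and $AB(x,z) = \sum_y A(x,y) B(y,z)$. Expanding the modulus squared and interchanging summations gives
\[
\|AB\|_{HS}^2 = \sum_{y,y'} \Bigl(\sum_x A(x,y)\overline{A(x,y')}\Bigr)\Bigl(\sum_z B(y,z)\overline{B(y',z)}\Bigr).
\]

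Next, I would set $\alpha(y,y') = \sum_x A(x,y)\overline{A(x,y')}$ and $\beta(y,y') = \sum_z B(y,z)\overline{B(y',z)}$ and apply Cauchy--Schwarz to the sum over the pair $(y,y')$:
\[
\|AB\|_{HS}^2 \le \Bigl(\sum_{y,y'}|\alpha(y,y')|^2\Bigr)^{1/2}\Bigl(\sum_{y,y'}|\beta(y,y')|^2\Bigr)^{1/2}.
\]
The final step is to observe that each factor on the right is a box norm to the fourth power. Indeed, expanding $|\alpha(y,y')|^2$ and summing over $y,y'$ gives
\[
\sum_{y,y'}|\alpha(y,y')|^2 = \sum_{x,x',y,y'} A(x,y)\overline{A(x,y')\,A(x',y)}A(x',y') = \|A\|_\square^4,
\]
which matches the box-norm formula introduced earlier (with sums replacing expectations), and similarly $\sum_{y,y'}|\beta(y,y')|^2 = \|B\|_\square^4$. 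Putting these together yields $\|AB\|_{HS}^2 \le \|A\|_\square^2 \|B\|_\square^2$, whence the result follows by taking square roots.

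There is really no serious obstacle here; the only thing to be careful about is the bookkeeping in the expansion, in particular matching conjugates correctly so that the two groupings over $x$ and over $z$ really do reproduce the box-norm formula verbatim. Everything else is a single application of Cauchy--Schwarz in the $(y,y')$ variables.
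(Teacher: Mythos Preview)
Your argument is correct and is essentially the same as the paper's proof. The only difference is packaging: the paper recognizes $\|AB\|_{HS}^2$ as the box inner product $[A,A,B^*,B^*]$ and then invokes the previously established box Cauchy--Schwarz inequality $[f_1,f_2,f_3,f_4]\le\prod_i\|f_i\|_\square$, whereas you carry out directly the single application of ordinary Cauchy--Schwarz in the $(y,y')$ variables that this special case requires; your $\sum_{y,y'}|\alpha|^2$ and $\sum_{y,y'}|\beta|^2$ are exactly $[A,A,A,A]=\|A\|_\square^4$ and $[B^*,B^*,B^*,B^*]=\|B\|_\square^4$.
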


\begin{proof} Observe that 
\begin{align*}
\|AB\|_{HS}^2&=\sum_{x,x'}|\sum_yA(x,y)B(y,x')|^2\\
&=\sum_{x,x'}\sum_{y,y'} A(x,y)\overline{A(x,y')B^*(x',y)}B^*(x',y').\\
\end{align*}
This last expression is the ``box inner product" $[A,A,B^*,B^*]$, and as we saw earlier (in the section on matrices) it satisfies a Cauchy-Schwarz-type inequality
\[[A,B,C,D]\leq\|A\|_\square\|B\|_\square\|C\|_\square\|D\|_\square.\]
Applying this, together with the fact that $\|B^*\|_\square=\|B\|_\square$, we obtain the result.
\end{proof}

\begin{lemma}
For every matrix $A$ we have $\|A\|_\square\leq\|A\|_{HS}$.
\end{lemma}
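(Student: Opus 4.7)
The plan is to expand $\|A\|_\square^4$ directly from its definition, rewrite it in a form that involves the inner products of pairs of rows of $A$, and then apply the ordinary Cauchy-Schwarz inequality.

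First I would perform the change of variables $x' = x+a$, $y' = y+b$ in the definition of $\|A\|_\square^4$, which gives
\[
\|A\|_\square^4 = \sum_{x,x',y,y'} A(x,y)\,\overline{A(x',y)\,A(x,y')}\,A(x',y').
\]
The key observation is that the summand factors across the pair $(y,y')$: it equals $\bigl(A(x,y)\overline{A(x',y)}\bigr)\bigl(\overline{A(x,y')}A(x',y')\bigr)$. So summing over $y$ and $y'$ separately, the inner sum over $y$ is just $(AA^*)(x,x')$ and the inner sum over $y'$ is its complex conjugate. Hence
\[
\|A\|_\square^4 = \sum_{x,x'} \Bigl|\sum_y A(x,y)\overline{A(x',y)}\Bigr|^2 = \|AA^*\|_{HS}^2.
\]

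Next I would apply the ordinary Cauchy-Schwarz inequality to each inner sum: for fixed $x$ and $x'$,
\[
\Bigl|\sum_y A(x,y)\overline{A(x',y)}\Bigr|^2 \leq \Bigl(\sum_y |A(x,y)|^2\Bigr)\Bigl(\sum_y |A(x',y)|^2\Bigr).
\]
Summing this inequality over $x$ and $x'$, the right-hand side factors as $\bigl(\sum_{x,y}|A(x,y)|^2\bigr)^2 = \|A\|_{HS}^4$, so $\|A\|_\square^4 \leq \|A\|_{HS}^4$, and taking fourth roots yields the claim.

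There isn't really a hard step here; the only thing to be slightly careful about is bookkeeping with sums versus expectations (the preceding lemma is phrased for the non-Abelian Fourier side, where both norms are defined via sums, so the normalizations match). One could alternatively route the argument through the singular value decomposition $A = \sum_i \sigma_i u_i\otimes\overline{v_i}$, observing that the identity $\|A\|_\square^4 = \|AA^*\|_{HS}^2$ gives $\|A\|_\square^4 = \sum_i \sigma_i^4$ while $\|A\|_{HS}^4 = \bigl(\sum_i \sigma_i^2\bigr)^2$, so the inequality reduces to the trivial $\sum_i \sigma_i^4 \leq (\sum_i \sigma_i^2)^2$; but the Cauchy-Schwarz route is cleaner and keeps the proof entirely at the level of matrix entries.
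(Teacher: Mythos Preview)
Your proof is correct. The paper's own proof takes the singular-value route that you mention at the end as an alternative: it simply notes that $\|A\|_\square$ is the $\ell_4$ norm of the singular values and $\|A\|_{HS}$ is their $\ell_2$ norm, whence the inequality is immediate. The paper does remark that ``this can be shown with a direct argument'' without giving one, and your Cauchy--Schwarz computation is exactly such a direct argument --- it avoids invoking the singular value decomposition altogether and works purely at the level of matrix entries. Either approach is perfectly fine here; yours is slightly more self-contained, while the paper's is a one-liner given the identification of the two norms with $\ell_p$ norms of singular values already established earlier in the text.
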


\begin{proof}
This can be shown with a direct argument, but it also follows from the fact that $\|A\|_\square$ is the $\ell_4$ norm of the singular values of $A$ and $\|A\|_{HS}$ is the $\ell_2$ norm.
\end{proof}

\begin{lemma} \label{bnp}
Let $G$ be a finite group and let $f,g:G\to\C$ be functions with average zero. Let $m$ be the smallest dimension of a non-trivial representation of $G$. Then 
\[\|f*g\|_2\leq m^{-1/2}\|f\|_2\|g\|_2.\]
\end{lemma}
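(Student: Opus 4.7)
The plan is to move to the Fourier side, where convolution becomes matrix multiplication, and then exploit the fact that every non-trivial irreducible representation of $G$ has dimension at least $m$.

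First I would apply Parseval and the convolution identity to write
\[\|f*g\|_2^2 \;=\; \sum_\rho n_\rho \|\widehat{f*g}(\rho)\|_{HS}^2 \;=\; \sum_\rho n_\rho \|\hat f(\rho)\hat g(\rho)\|_{HS}^2.\]
The hypothesis that $f$ and $g$ have mean zero translates into $\hat f(1)=\E_x f(x)=0$ and $\hat g(1)=0$, where $1$ denotes the trivial representation, so only the non-trivial $\rho$ contribute to the sum.

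Next I would combine the two matrix inequalities established just before the lemma to obtain $\|\hat f(\rho)\hat g(\rho)\|_{HS} \leq \|\hat f(\rho)\|_\square \|\hat g(\rho)\|_\square \leq \|\hat f(\rho)\|_{HS}\|\hat g(\rho)\|_{HS}$. Plugging this in and pulling the worst $g$-factor out of the sum yields
\[\|f*g\|_2^2 \;\leq\; \Bigl(\max_{\rho \neq 1}\|\hat g(\rho)\|_{HS}^2\Bigr)\sum_\rho n_\rho \|\hat f(\rho)\|_{HS}^2 \;=\; \|f\|_2^2 \cdot \max_{\rho \neq 1}\|\hat g(\rho)\|_{HS}^2,\]
the last equality being Parseval applied to $f$.

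Finally, the dimension bound enters through a second application of Parseval, now for $g$: we have $\|g\|_2^2 = \sum_\sigma n_\sigma \|\hat g(\sigma)\|_{HS}^2 \geq n_\rho \|\hat g(\rho)\|_{HS}^2 \geq m\|\hat g(\rho)\|_{HS}^2$ for every non-trivial $\rho$, so $\max_{\rho\neq 1}\|\hat g(\rho)\|_{HS}^2 \leq \|g\|_2^2/m$. Combining with the previous display and taking square roots gives the stated inequality. There is no real obstacle here: the whole content of the lemma is just two applications of non-Abelian Parseval together with the observation that, since every non-trivial $\rho$ has dimension at least $m$, a single isotypic component of $g$ can carry at most a $1/m$ proportion of $\|g\|_2^2$. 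The one point worth a little care is the book-keeping of normalizations: the paper's conventions put sums and the unnormalized Hilbert--Schmidt norm on the Fourier side and expectations on the group side, and these conventions mesh in precisely the way that prevents any stray factors of $|G|$ from appearing.
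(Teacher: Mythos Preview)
Your proof is correct and follows essentially the same route as the paper: pass to the Fourier side via Parseval and the convolution identity, bound $\|\hat f(\rho)\hat g(\rho)\|_{HS}$ by $\|\hat f(\rho)\|_{HS}\|\hat g(\rho)\|_{HS}$ using the two matrix lemmas, and then exploit the fact that a single non-trivial isotypic block can carry at most a $1/m$ share of the $L^2$ mass. The only cosmetic difference is that the paper pulls out $\max_\rho\|\hat f(\rho)\|_{HS}^2$ and sums the $g$-factors, whereas you pull out $\max_\rho\|\hat g(\rho)\|_{HS}^2$ and sum the $f$-factors; since the estimate is symmetric in $f$ and $g$, this is immaterial.
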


\begin{proof}
By the convolution identity, Parseval's identity and the lemmas above, we have that
\begin{align*}
\|f*g\|_2^2&=\sum_\rho n_\rho\|\hat f\hat g\|_{HS}^2\\
&\leq\sum_\rho n_\rho\|\hat f\|_\square^2\|\hat g\|_\square^2\\
&\leq\sum_\rho n_\rho\|\hat f\|_{HS}^2\|\hat g\|_{HS}^2.\\
\end{align*}
Since $f$ averages zero, $\hat f(\rho)=0$ when $\rho$ is the trivial representation. Also, by Parseval's identity we have that $\sum_\rho n_\rho\|\hat f(\rho)\|_{HS}^2=\|f\|_2^2$. It follows that the maximum possible value of $\|\hat f(\rho)\|_{HS}^2$ is $m^{-1}\|f\|_2^2$. Therefore, using Parseval's identity again, we find that
\[\sum_\rho n_\rho\|\hat f\|_{HS}^2\|\hat g\|_{HS}^2\leq m^{-1}\|f\|_2^2\sum_\rho n_\rho\|\hat g(\rho)\|_{HS}^2=m^{-1}\|f\|_2^2\|g\|_2^2,\]
which completes the proof.
\end{proof}

Now let us quickly deduce that if a group $G$ has no non-trivial low-dimensional representations, then it does not contain a large product-free set. 

\begin{theorem}
Let $G$ be a finite group and let $m$ be the smallest dimension of a non-trivial representation of $G$. Then $G$ contains no product-free subset of density greater than $m^{-1/3}$. 
\end{theorem}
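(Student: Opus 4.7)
The plan is to mimic the Fourier-analytic proof of Roth's theorem, but with Lemma \ref{bnp} playing the role that the Fourier $\ell^\infty$ bound played there. Let $A\subset G$ be product-free of density $\a$, and also write $A$ for its characteristic function. The quantity I want to control is
\[\langle A*A, A\rangle = \E_z\, A*A(z)A(z) = |G|^{-2}\#\{(x,y)\in G\times G: x,y,xy\in A\},\]
which vanishes exactly because $A$ is product-free.

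Next I would split off the mean by writing $A=\a+f$, where $f$ has average zero. Since $f$ averages zero, the ``cross terms'' $\a*f$ and $f*\a$ vanish, so $A*A=\a^2+f*f$, and a further expansion together with $\E f=0$ and $\E f*f=0$ gives
\[0=\langle A*A,A\rangle = \a^3+\langle f*f,f\rangle.\]
Therefore $|\langle f*f,f\rangle|=\a^3$, and it remains to show the left-hand side is small. By Cauchy--Schwarz, $|\langle f*f,f\rangle|\leq\|f*f\|_2\|f\|_2$. Now Lemma \ref{bnp} applies to $f$ (which has mean zero) and yields $\|f*f\|_2\leq m^{-1/2}\|f\|_2^2$, so
\[\a^3\leq m^{-1/2}\|f\|_2^3.\]
Finally $\|f\|_2^2=\E(A-\a)^2=\a-\a^2\leq\a$, hence $\|f\|_2^3\leq\a^{3/2}$, and rearranging $\a^3\leq m^{-1/2}\a^{3/2}$ yields $\a\leq m^{-1/3}$, as desired.

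There is really no serious obstacle here: the main point is simply to recognize that, once one has a bound of the form $\|f*g\|_2\leq m^{-1/2}\|f\|_2\|g\|_2$ for mean-zero functions, the Roth-style ``count and subtract the mean'' argument goes through word for word. The only very minor care needed is the convention on product-free sets (whether pairs $(x,x)$ with $x^2=x$ are excluded); with the standard convention that forces $e\notin A$, the count $\langle A*A,A\rangle$ is exactly zero, and in any case any trivial contribution is $O(|G|^{-1})$ and does not affect the bound for $\a$ bounded away from zero.
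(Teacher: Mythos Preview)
Your proof is correct and is essentially identical to the paper's own argument: both write $A=\a+f$, expand $\langle A*A,A\rangle$ to $\a^3+\langle f*f,f\rangle$, apply Cauchy--Schwarz followed by Lemma~\ref{bnp} to get $\a^3\leq m^{-1/2}\|f\|_2^3$, and finish with $\|f\|_2^2=\a(1-\a)\leq\a$. The only cosmetic difference is that the paper keeps the factor $(1-\a)^{3/2}$ before discarding it, whereas you drop it a line earlier.
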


\begin{proof}
Let $\a$ be the density of $A$ and as usual let $f$ be the function $f(x)=A(x)-\a$. We shall now try to show that
\[\E_{xy=z}A(x)A(y)A(z)\ne 0,\]
which obviously implies that $A$ is not product free.

We have that
\[\E_{xy=z}A(x)A(y)A(z)=\E_{xy=z}(\a+f(x))(\a+f(y))(\a+f(z)),\]
and since $f$ averages zero, if we expand the right-hand side into eight separate sums, we find that all terms are zero apart from two, and we obtain the expression
\[\a^3+\E_{xy=z}f(x)f(y)f(z)=\a^3+\langle f*f,f\rangle.\]
By the Cauchy-Schwarz inequality and Lemma \ref{bnp} we have that
\[|\langle f*f,f\rangle|\leq\|f*f\|_2\|f\|_2\leq m^{-1/2}\|f\|_2^3.\]
We also have that $\|f\|_2^2=\a(1-\a)^2+(1-\a)\a^2=\a(1-\a)$. Therefore, if $A$ is product free we must have the inequality
\[\a^{3/2}(1-\a)^{3/2}m^{-1/2}\geq\a^3,\]
which implies that $\a\leq m^{-1/3}$.
\end{proof}

It remains to remark that there do exist groups with no low-dimensional representations. Indeed, any family of finite simple groups has this property, though some have it much more strongly than others. The ``most" non-Abelian family of groups is the family PSL$(2,q)$. If a group in this family has order $n$, then its non-trivial representations have dimension at least $cn^{1/3}$, where $c>0$ is an absolute constant. Therefore, these groups have no product-free subsets of density greater than $c'n^{-1/9}$.

The same argument shows that if $A$, $B$ and $C$ are sets of density greater than $m^{-1/3}$, then $ABC=\{abc:a\in A, b\in B, c\in C\}=G$. 

There turns out to be a close connection between groups with no low-dimensional representations and quasirandom graphs. If $G$ is a finite group with no low-dimensional non-trivial representations, then for any dense set $A\subset G$ we can define a bipartite graph with two copies of $G$ as its vertex sets and $x$ joined to $y$ if and only if $y=ax$ for some $a\in A$. The remark above about sets $A,B$ and $C$ tells us that this graph is quasirandom. 

As a final remark, we note that the $U^2$ norm can be generalized easily to a non-Abelian context. A good definition turns out to be as follows.
\[\|f\|_{U^2}^4=\E_{xy^{-1}zw^{-1}=e}f(x)\overline{f(y)}f(z)\overline{f(w)}.\]
The properties of this norm are just what one would hope. For example, one can define a generalized inner product in the obvious way, and we do indeed have the inequality
\[[f_1,f_2,f_3,f_4]\leq\|f_1\|_{U^2}\|f_2\|_{U^2}\|f_3\|_{U^2}\|f_4\|_{U^2},\]
which can then be used in the usual way to prove that this $U^2$ norm is a norm. We also have the Fourier interpretation that one would guess, namely
\[\|f\|_{U^2}^4=\sum_\rho n_\rho\|\hat f(\rho)\|_\square^4.\]
This is the natural guess because it involves fourth powers on the right-hand side, both in the obvious sense that there is a fourth power visible in the expression, and also in the less obvious sense that the box norm of a matrix is equal to the $\ell_4$ norm of the singular values. (Thus, in a certain sense we have fourth powers of generalized Fourier coefficients in two different ways.) Indeed, there is a natural way of defining an $\ell_p$ norm on $\hat G$ for every $p$. For an $m\times m$ matrix $A$, one defines the \emph{trace-class norm} $\|A\|_p$ to be the $\ell_p$ norm of the singular values of $A$, and then for a matrix-valued function $\hat f$ one defines $\|f\|_p$ by the formula
\[\|\hat f\|_p^p=\sum_\rho n_\rho\|\hat f(\rho)\|_p^p.\]
That is, we take the $\ell_p$ norm in $\hat G$ of the function $x\mapsto\|\hat f(x)\|_p$. Once we have done this, we have the familiar identity
\[\|f\|_{U^2}=\|\hat f\|_4.\]

Functions with small $U^2$ norms (given their averages) behave like random functions, and when a group has no non-trivial low-dimensional representations, Lemma \ref{bnp} tells us that \emph{all} reasonably spread out functions behave like random functions. To see this, note that $\|f\|_{U^2}^4=\|f*f^*\|_2^2$, where $f^*(x)$ is defined to be $\overline{f(x^{-1})}$, so if $f$ averages zero, then we have the inequality $\|f\|_{U^2}^4\leq m^{-1}\|f\|_2^2$.

\section{Fourier analysis for matrix-valued functions}

Let $G$ be a finite group and let $f:G\to$M$_n(\C)$ be a matrix-valued function. (We are not assuming that $n$ is the order of $G$.) We can define a Fourier transform for $f$ by simply applying the definition of the previous section to each matrix coefficient. That is, for each $i,j\leq n$ we define $f_{ij}$ to be the function $x\mapsto f(x)_{ij}$, and then for each irreducible representation $\rho$ we define $\hat f(\rho)$ to be the $n\times n$ block matrix whose $ij$th entry is the $n_\rho\times n_\rho$ matrix $\widehat {f_{ij}}(\rho)$. Thus, $\hat f(\rho)$ is an $nn_\rho\times nn_\rho$ matrix. 

We can write this definition more concisely, and in a basis-free way, as follows.
\[\hat f(\rho)=\E_xf(x)\otimes\overline{\rho(x)}.\]
Thus, whereas with Abelian groups we had scalar-valued functions and scalar-valued representations, and in the previous section we had scalar-valued funtions and matrix-valued representations, now we have matrix-valued functions and matrix-valued representations. In each case we take tensor products, but in the first two cases they are trivial.

In order to state the basic properties of the Fourier transform, we need to be clear about our notation. For a matrix-valued function $f$ on the physical side, we shall write $\|f\|_2$ for the norm defined by the formula
\[\|f\|_2^2=\E_x\|f(x)\|_2^2\]
Here it turns out to be convenient to take $\|f(x)\|_2$ to be the non-normalized Hilbert-Schmidt norm. Thus, the norm scales with the dimension of $f$, but not with the size of the group. 

On the Fourier side, we have similar definitions but using sums all the way through, so these are the same as the definitions of the norms and inner product in the scalar case.

With these normalizations, the first few basic properties of the Fourier transform now read as follows.
\begin{itemize}
\item $\|f\|_2^2=\|\hat f\|_2$. (Parseval's identity).
\item $\langle f,g\rangle=\langle\hat f,\hat g\rangle$. (Parseval's identity).
\item $\widehat{f*g}(\rho)=\hat f(\rho)\hat g(\rho)$. (Convolution formula).
\end{itemize}
The proofs are more or less the same as in the scalar case, but to clarify the point about normalizations, we give a proof of the second version of Parseval's identity, which goes like this. (It is important to realize that the meaning of the inner product varies from expression to expression -- sometimes we are talking about the inner product of two matrices, and sometimes about the inner product of two matrix-valued functions.)
\begin{align*}
\langle\hat f,\hat g\rangle &= \sum_\rho n_\rho \langle\hat f(\rho),\hat g(\rho)\rangle\\
&=\sum_\rho n_\rho\langle\E_x f(x)\otimes\overline{\rho(x)},\E_y g(y)\otimes\overline{\rho(y)}\rangle\\
&=\E_{x,y}\langle f(x),g(y)\rangle\sum_\rho n_\rho\overline{\langle\rho(x),\rho(y)\rangle}\\
\end{align*}
In the last expression, both inner products use sums. 

By the basic orthogonality property from representation theory, the sum over $\rho$ is equal to $|G|\d_{xy}$, so we end up with $\E_x\langle f(x),g(x)\rangle$, which is the definition of $\langle f,g\rangle$. 

The inversion formula is also straightforward, but it needs a little notation. Recall that for scalar-valued functions the inversion formula was $f(x)=\sum_\rho n_\rho\tr(\hat f(\rho)\overline{\rho(x)^*})$. Since the Fourier transform for matrix-valued functions is obtained by applying the Fourier transform for scalar-valued functions to each matrix entry, we obtain the formula
\[f(x)_{ij}=\sum_\rho n_\rho\tr(\widehat{f_{ij}}(\rho)\overline{\rho(x)^*}).\]
Let us write $\tr_\rho$ for the operation that takes an $n\times n$ block matrix $A$ with blocks that are $n_\rho\times n_\rho$ matrices and returns the $n\times n$ matrix whose $ij$th value is the (unnormalized) trace of the $ij$th block of $A$. Then we can write the inversion formula in the form
\[f(x)=\sum_\rho\tr_\rho(\hat f(\rho)\overline{\rho(x)^*}),\]
which is just like the formula when $f$ takes scalar values except that the trace function $\tr$ has been replaced by the matrix-of-traces function $\tr_\rho$.

This matrix-valued Fourier transform was introduced, with slightly different conventions, by Moore and Russell \cite{moorerussell15} (not the famous philosophers, but a pair of contemporary mathematicians). It is useful when one wishes to measure the extent to which a matrix-valued function behaves like a representation. To illustrate this, let us look at a very nice result that shows in a simple way how the matrix-valued transform can be used.

In the previous section, we remarked that if $G$ is a finite group with no low-dimensional non-trivial representations, then for every function $f:G\to\C$ such that $\|f\|_\infty\leq 1$ and $\E_xf(x)=0$ the $U^2$ norm of $f$ is small. That is,
\[\E_{xy^{-1}zw^{-1}}f(x)\overline{f(y)}f(z)\overline{f(w)}\]
is small. Now if we could find a non-trivial character of $G$, in the sense of a homomorphism from $G$ to $\C$ that is not the identity, then this would not be true: whenever $xy^{-1}zw^{-1}=e$ we would have $f(x)\overline{f(y)}f(z)\overline{f(w)}=1$ so the average would be 1, which is the largest it can possibly be. So the observation that the $U^2$ norm has to be small is telling us that $G$ not only fails to have a non-trivial character (which we know because it has no non-trivial low-dimensional representations), but it does not admit any functions that are even very slightly \emph{close} to being a non-trivial character: if $f$ averages zero, then it not possible for the average real part of $f(x)\overline{f(y)}f(z)\overline{f(w)}$ to be greater than some small constant when $xy^{-1}zw^{-1}=e$.

Moore and Russell showed that this observation can be extended to matrix-valued functions. (Actually, the precise result they showed was not quite this one, but it was very similar and had a very similar proof.) That is, if the dimension of the smallest non-trivial representation is $m$, if $n$ is substantially less than $m$, and if $f$ takes values that are $n\times n$ matrices with operator norm at most 1, and if $\E_xf(x)=0$, then the $U^2$ norm of $f$ is small. This is a stronger statement, because matrix-valued functions have more elbow room and therefore more room to create the necessary correlations. It is also stronger in a more obvious way: it tells us that not only are scalar-valued functions on $G$ as unlike non-trivial characters as they could possibly be, low-dimensional matrix-valued functions are as far from non-trivial representations as they could possibly be.

We begin by observing that the statement and proof of Lemma \ref{bnp} carry over almost word for word to the matrix-valued case. (The proof is so close that we do not give it again.) 

\begin{lemma} \label{matrixbnp}
Let $G$ be a finite group and let $m$ be the smallest dimension of a non-trivial representation of $G$. Let $f,g:G\to$M$_n(\C)$ be two matrix-valued functions that average zero. Then
\[\|f*g\|_2\leq m^{-1/2}\|f\|_2\|g\|_2.\]
\end{lemma}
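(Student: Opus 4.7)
The plan is to imitate the proof of Lemma \ref{bnp} almost verbatim, because the three ingredients used there all survive the passage to matrix-valued functions. First, the convolution identity $\widehat{f*g}(\rho)=\hat f(\rho)\hat g(\rho)$ and Parseval's identity $\|f\|_2^2=\sum_\rho n_\rho\|\hat f(\rho)\|_{HS}^2$ have been recorded in this section for matrix-valued functions, with exactly the normalisations we need. Secondly, the two matrix inequalities $\|AB\|_{HS}\leq\|A\|_\square\|B\|_\square$ and $\|A\|_\square\leq\|A\|_{HS}$ were proved earlier for square matrices of arbitrary size, so they apply just as well to the $nn_\rho\times nn_\rho$ matrices $\hat f(\rho)$ that arise here.

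Concretely, I would proceed in four short steps. Apply Parseval and the convolution identity to obtain
\[\|f*g\|_2^2=\sum_\rho n_\rho\|\hat f(\rho)\hat g(\rho)\|_{HS}^2.\]
Use the two matrix lemmas in succession to estimate each summand by
\[\|\hat f(\rho)\hat g(\rho)\|_{HS}^2\leq\|\hat f(\rho)\|_\square^2\|\hat g(\rho)\|_\square^2\leq\|\hat f(\rho)\|_{HS}^2\|\hat g(\rho)\|_{HS}^2.\]
Next, exploit the hypothesis that $g$ averages zero: then $\hat g$ vanishes on the trivial representation, so every $\rho$ contributing to the sum satisfies $n_\rho\geq m$. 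Parseval applied to $g$ therefore yields $\|\hat g(\rho)\|_{HS}^2\leq n_\rho^{-1}\|g\|_2^2\leq m^{-1}\|g\|_2^2$ for each such $\rho$. Finally, pull this uniform bound out of the sum and invoke Parseval once more for $f$ to get
\[\|f*g\|_2^2\leq m^{-1}\|g\|_2^2\sum_\rho n_\rho\|\hat f(\rho)\|_{HS}^2=m^{-1}\|f\|_2^2\|g\|_2^2,\]
after which taking square roots finishes the proof.

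There is no real obstacle — the only thing one has to be careful about is bookkeeping of the normalisations, namely that on the physical side $\|f\|_2^2$ averages over $G$ and uses the unnormalised Hilbert--Schmidt norm pointwise, whereas on the Fourier side we sum over representations with the weight $n_\rho$ and again use the unnormalised Hilbert--Schmidt norm on each block. These conventions have been chosen precisely so that Parseval's identity looks the same as in the scalar case, which is why the scalar proof of Lemma \ref{bnp} transfers with no change of constants.
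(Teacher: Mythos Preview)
Your proof is correct and follows exactly the approach the paper intends: it explicitly says that the statement and proof of Lemma~\ref{bnp} carry over almost word for word to the matrix-valued case and omits the details, which you have filled in accurately. The only cosmetic difference is that the paper's proof of Lemma~\ref{bnp} pulls out the factor coming from $f$ rather than $g$, but since both functions average zero this is immaterial.
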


There is, however, an important new factor to take into account here, which is that the two sides of the inequality scale differently with the dimension. Suppose, for instance, that both $f$ and $g$ are equal to the same $n$-dimensional representation $\rho$. Then the 2-norms of every single $f(x)$, $g(x)$ and $f*g(x)$ are all equal to $n$, so $\|f*g\|_2^2=n$, while $m^{-1}\|f\|_2^2\|g\|_2^2=m^{-1}n^2$. So the inequality does not stop $f$ and $g$ from being representations when $n=m$, which of course is as it should be, since $m$ is defined to be the dimension of a representation of $G$.

With that remark in mind, let us turn to $U^2$ norms. The natural definition of the $U^2$ norm in the matrix-valued case is
\[\|f\|_{U^2}^4=\E_{xy^{-1}zw^{-1}=e}\tr(f(x)f(y)^*f(z)f(w)^*),\]
where the trace is not normalized. This is equal to $\|f*f^*\|_2^2$ and it is also equal to $\|\hat f\|_4^4$. (Recall that this is defined to be $\sum_\rho n_\rho\|\hat f\|_\square^4$.) Therefore, by Lemma \ref{matrixbnp} we find that
\[\|f\|_{U^2}^4\leq m^{-1}\|f\|_2^4.\]
In particular, if $f$ takes values $f(x)$ with Hlibert-Schmidt norm at most $n^{1/2}$ (which is the case, for example, if they are all unitary, and more generally if they all have operator norm at most 1), then $\|f\|_{U^2}^4\leq m^{-1}n^2$. For $f$ to be a unitary representation, we would need $\|f\|_{U^2}^4$ to be equal to $n$ (since $f(x)f(y)^*f(z)f(w)^*$ would be the identity whenever $xy^{-1}zw^{-1}=e$), which would equal $n^{-1}\|f\|_2^4$, which is at most $n$, by hypothesis. Therefore, if $n$ is significantly less than $m$, so that $m^{-1}n^2$ is significantly less than $n$, we see that $f$ is not even close to being a representation, in the sense that there is almost no correlation between $f(x)f(y)^*$ and $f(w)f(z)^*$ even if we are given that $xy^{-1}=wz^{-1}$. 

\section{An inverse theorem for the matrix $U^2$ norm}

Recall the very simple inequalities that we used earlier to relate the $\ell_4$ and $\ell_\infty$ norms of the Fourier transform of a function $f$ from an Abelian group to $\C$. If we know that $\|f\|_2\leq 1$, then we find that
\[\|\hat f\|_\infty^4\leq\|\hat f\|_4^4\leq\|\hat f\|_\infty^2\|\hat f\|_2^2\leq\|\hat f\|_\infty^2.\]
Since $\|\hat f\|_4=\|\hat f\|_{U^2}$, we deduce that if $\|f\|_{U^2}\geq c$, then there exists a character $\chi$ such that $|\langle f,\chi\rangle|=|\hat f(\chi)|\geq c^2$.

What happens if we try to generalize this to non-Abelian groups and to matrix-valued functions? Let us assume that $f(x)$ is an $n\times n$ matrix with operator norm at most 1 for every $x$. (The operator norm is the maximum of the singular values, and thus the natural $\ell_\infty$ norm of a matrix.) Then just as before, we have
\[\|\hat f\|_\infty^4\leq\|\hat f\|_4^4=\sum_\rho n_\rho\|\hat f(\rho)\|_4^4\leq\max_\rho\|\hat f(\rho)\|_\infty^2\sum_\rho n_\rho\|\hat f(\rho)\|_2^2=\|\hat f\|_\infty^2\|f\|_2^2.\]
Unfortunately, if $n$ is large, then this is no longer a rough equivalence, since the best we can say about $\|f\|_2^2$ is that it is at most $n$ (since it is the sum of the squares of $n$ singular values, each of which lies between 0 and 1). 

However, that does not mean that there is nothing we can say. The largest possible value of $\|\hat f\|_4^4=\|f\|_{U^2}^4$ is, as we have seen, $n$. Let us take a function $f$ such that $\|f\|_{U^2}^4\geq cn$. Then we obtain from the second inequality above that there exists an irreducible representation $\rho$ such that $\|\hat f(\rho)\|_\infty\geq c^{1/2}$. 

If $G$ is Abelian and $f:G\to\C$, then $\|\hat f(\chi)\|_\infty$ is just $|\langle f,\chi\rangle|$, so this statement is saying that $f$ correlates in a significant way with a character. But in our situation we have the more complicated statement that
\[\|\E_xf(x)\otimes\overline{\rho(x)}\|_\infty\geq c^{1/2}.\]
Is this telling us that $f$ correlates in some sense with $\rho$?

Let us try to interpret it. We shall first use the fact that $\|A\|_\infty$, the operator norm of $A$, is the largest possible value of $\|Au\|_2$ over all unit vectors $u$, which in turn is the largest possible value of $\langle Au,v\rangle$ over all pairs of unit vectors $u$ and $v$. Therefore, we can find unit vectors $u$ and $v$ such that
\[\langle (\E_xf(x)\otimes\overline{\rho(x)})u,v\rangle\geq c^{1/2}.\]
Now let us rewrite this in coordinate form. Because of the special form of the $n\times n_\rho$ matrix $f(x)\otimes\overline{\rho(x)}$ it is natural to give it four indices instead of two: we have that $(f(x)\otimes\overline{\rho(x)})_{ijkl}=f(x)_{ik}\overline{\rho(x)_{jl}}$. Then indexing $u$ and $v$ in the corresponding way, and writing them as $U$ and $V$ since they have now become matrices, we have that
\[\langle (f(x)\otimes\overline{\rho(x)})U,V\rangle=\sum_{i,j,k,l}f(x)_{ik}\overline{\rho(x)_{jl}}U_{kl}\overline{V_{ij}}=\langle f(x)U\rho(x)^*,V\rangle,\]
where the product in the last expression is just normal matrix multiplication. Taking expectations, we deduce that
\[\E_x\langle f(x)U\rho(x)^*,V\rangle\geq c^{1/2}\]
for two $n\times n_\rho$ matrices $U$ and $V$ that have Hilbert-Schmidt norm 1. We can write this more symmetrically as
\[\E_x\langle f(x)U,V\rho(x)\rangle\geq c^{1/2}.\]
It is natural to rescale $U$ and $V$ so that they have Hilbert-Schmidt norm $n_\rho^{1/2}$. That is, we can say that there exist an irreducible representation $\rho$ and matrices $U$ and $V$ with $\|U\|_2^2=\|V\|_2^2=n_\rho$ such that
\[\E_x\langle f(x)U\rho(x)^*,V\rangle=\E_x\langle f(x)U,V\rho(x)\rangle=\E_x\langle f(x),V\rho(x)U^*\rangle\geq c^{1/2}n_\rho.\]

This seems quite satisfactory, but it falls short of being a true inverse theorem for the matrix $U^2$ norm because the converse does not hold. That is, if we are given $\rho,U$ and $V$ satisfying the above conditions, we cannot deduce that $f$ has a large $U^2$ norm unless $n_\rho$ is comparable to $n$, which it does not have to be.

Thus, we are in an interesting situation. Earlier, we thought of inverse theorems as something one settles for when one does not have an inversion formula. But here we have a clean and easily proved inversion formula that does not directly yield an inverse theorem. 

However, we have not yet exhausted our options. If $\|f\|_{U^2}^4=\|\hat f\|_4^4\geq cn$, then we are given that
\[\sum_\rho n_\rho\|\hat f(\rho)\|_4^4\geq cn,\]
where $\|\hat f(\rho)\|_4^4$ denotes the sum of the fourth powers of the singular values of $\hat f(\rho)$. Let these singular values be $\lambda_{\rho,i}$ for $i=1,2,\dots,n_\rho$. Then we find that
\[\sum_\rho n_\rho\sum_{i=1}^{n_\rho}\lambda_{\rho,i}^4\geq cn.\]
From Parseval's inequality and the assumption that each $f(x)$ has operator norm at most 1 (and hence Hilbert-Schmidt norm at most $n$) we also have that
\[\sum_\rho n_\rho\sum_{i=1}^{n_\rho}\lambda_{\rho,i}^2\leq n.\]
Also, since $\hat f(\rho)=\E_xf(x)\otimes\overline{\rho(x)}$ is an average of matrices with operator norm at most 1, every $\lambda_{\rho,i}$ is at most 1. 

Let $\lambda_1,\dots,\lambda_m$ be the singular values $\lambda_{\rho,i}$ arranged in some order, and for each $i$ let $n_i$ be the $n_\rho$ that corresponds to $\lambda_i$. Then we can rewrite these inequalities as
\[\sum_in_i\lambda_i^4\geq cn,\]
\[\sum_in_i\lambda_i^2\leq n,\]
and 
\[\lambda_i\leq 1.\]
Note that if $c=1$, then the only way of achieving the above inequalities is for $\lambda_i^4$ to equal $\lambda_i^2$ for every $i$ (assuming that none of the $n_i$ is zero). Thus, each $\lambda_i$ is either 0 or 1, and $\sum\{n_i:\lambda_i=1\}=n$. It is a straightforward exercise to prove that the more relaxed assumptions above lead to similar but more relaxed conclusion: we can find a set $A$ and constants $c_1>0$ and $C$ that depend on $c$ only (with a power dependence) such that $c_1n\leq\sum_{i\in A}n_i\leq Cn$, and $\lambda_i\geq c_1$ for every $i\in A$. In short, we can find a set of large singular values (coming from the various $\hat f(\rho)$ of size roughly comparable to $n$. 

With each such singular value $\lambda_i$ we can associate $n\times n_i$ matrices $U_i$ and $V_i$ with Hilbert-Schmidt norm $n_i$ such that $\E_x\langle f(x)U_i, V_i\rho_i\rangle\geq\lambda_in_i$, where $\rho_i$ is the representation corresponding to $\lambda_i$. Moreover, if two pairs $(U_i,V_i)$ and $(U_j,V_j)$ come from the same $\rho$, then because of the nature of singular value decompositions, we have that $\langle U_i,U_j\rangle=\langle V_i,V_j\rangle=0$.

It is plausible that we can put together these matrices and irreducible representations to create a representation-like function that correlates with $f$, and moreover that gives us an inverse theorem in the sense that the correlation in its turn implies that $f$ has a large $U^2$ norm. Exactly how the putting together should work is not obvious, but it turns out that it can be done. It yields the following theorem, due to the author and Omid Hatami \cite{gowershatami15}. In the statement, recall that $\|f\|_\infty$ means the largest operator norm of any $f(x)$. Also, we define a \emph{partial unitary} matrix to be an $n\times m$ matrix such that the rows are orthonormal if $n\leq m$ and the columns are orthonormal if $n\geq m$. (In particular, if $n=m$ then the matrix is unitary.) 

\begin{theorem} \label{matrixinverse}
Let $G$ be a finite group, let $c>0$ and let $f:G\to$M$_n(\C)$ be a function such that $\|f\|_\infty\leq 1$ and $\|f\|_{U^2}^4\geq cn$. Then there exists $m$ such that $cn/4\leq m\leq 4n/c$, an $m$-dimensional representation $\sigma$, and $n\times m$ partial unitary matrices $U$ and $V$, such 
that
\[\E_x\langle f(x)U,V\sigma(x)\rangle\geq c^2m/16.\]
\end{theorem}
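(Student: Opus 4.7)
The plan is to mimic the familiar Abelian derivation --- that $\|\hat f\|_4\ge c$ forces $\|\hat f\|_\infty\ge c^2$ --- but to retain \emph{every} singular value of $\hat f(\rho)$ that sits above a constant threshold, and to glue the corresponding singular vectors into a single representation $\sigma$ and a single pair $(U,V)$ of matrices. The three algebraic inputs are already at hand: the hypothesis unpacks as $\sum_\rho n_\rho \sum_i (\lambda_i^{(\rho)})^4 \ge cn$ where $\lambda_i^{(\rho)}$ are the singular values of $\hat f(\rho)$; Parseval gives $\sum_\rho n_\rho \sum_i (\lambda_i^{(\rho)})^2 = \|f\|_2^2 \le n$; and each $\lambda_i^{(\rho)}\le 1$ because $\hat f(\rho)$ is an average of operator-norm-$\le 1$ matrices. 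Splitting the first sum at $t=\sqrt{c/2}$ and bounding the low part by $t^2$ times the Parseval sum, the set $S=\{(\rho,i):\lambda_i^{(\rho)}\ge t\}$ satisfies $m_0 := \sum_{(\rho,i)\in S} n_\rho \in [cn/2,\,2n/c]$, exactly the Chebyshev extraction already sketched just before the theorem.

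Second, I would assemble the candidates. For each $(\rho,i)\in S$ let $V_i^{(\rho)}$ and $U_i^{(\rho)}$ be the reshapes, as $n\times n_\rho$ matrices of Hilbert--Schmidt norm $1$, of the right and left singular vectors of $\hat f(\rho)$ with singular value $\lambda_i^{(\rho)}$, so that $\E_x f(x)V_i^{(\rho)}\rho(x)^* = \lambda_i^{(\rho)} U_i^{(\rho)}$. Take $\sigma = \bigoplus_{(\rho,i)\in S}\rho$, a representation of dimension $m_0$, and concatenate the rescaled blocks into $n\times m_0$ matrices $U_0 = [\sqrt{n_\rho}\,V_i^{(\rho)}]$ and $V_0 = [\sqrt{n_\rho}\,U_i^{(\rho)}]$. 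By block-diagonality of $\sigma(x)$,
\[
\E_x\langle f(x)U_0, V_0\sigma(x)\rangle = \sum_{(\rho,i)\in S} n_\rho \lambda_i^{(\rho)} \ge t\,m_0,
\]
and $\|U_0\|_2 = \|V_0\|_2 = \sqrt{m_0}$, which is exactly the Hilbert--Schmidt norm that a partial unitary of those dimensions must have.

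The final step is to polish $U_0, V_0$ into honest partial unitaries while retaining a correlation of order $m_0$. Writing polar decompositions $U_0 = W_U Q_U$ and $V_0 = W_V Q_V$ with $W_U, W_V$ partial isometries and $Q_U^2 = U_0^* U_0$, $Q_V^2 = V_0^* V_0$ positive semidefinite of trace $m_0$, the plan is to find a $\sigma$-invariant subspace $E\subset\C^{m_0}$ on which $Q_U$ and $Q_V$ are both bounded below by some $\tau=\tau(c)$ of order $c^{3/2}$, and to take $U,V$ to be the restrictions of $W_U,W_V$ to $E$, together with the subrepresentation $\sigma|_E$. With $m := \dim E$ in $[cn/4, 4n/c]$, tracking constants then matches the advertised $c^2 m / 16$ lower bound.

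The main obstacle is this last step. Although the singular matrices $V_i^{(\rho)}$ are Hilbert--Schmidt-orthonormal within a fixed $\rho$, there is no orthogonality at all between blocks coming from different $\rho$'s, so $U_0^* U_0$ can be very far from the identity in operator norm even though each of its diagonal blocks has trace exactly $n_\rho$. The escape route is representation-theoretic: averaging $\sigma(g)^* Q_U^2 \sigma(g)$ over $g\in G$ and applying Schur's lemma forces the average to be a scalar on each isotypic component of $\sigma$, and one then pigeonholes --- simultaneously in $Q_U$ and $Q_V$ --- to produce a $\sigma$-invariant subspace on which the unaveraged Gram matrices are themselves comparable to the identity. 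Making this argument clean enough to produce genuine partial unitaries and a genuine subrepresentation of $\sigma$, with constants good enough to yield the stated bound, is the heart of Gowers and Hatami's argument.
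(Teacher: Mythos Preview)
Your proposal tracks the paper's own discussion essentially line for line: the Chebyshev-style extraction of a set of large singular values with $\sum n_\rho$ comparable to $n$, the assembly of the corresponding singular vectors into block matrices against the direct-sum representation $\sigma=\bigoplus\rho$, and the identification of the conversion to genuine partial unitaries as the remaining obstacle are all exactly what the paper sketches before stating the theorem. The paper does not in fact give a proof beyond this point either; it says only that ``exactly how the putting together should work is not obvious, but it turns out that it can be done'' and cites \cite{gowershatami15}.

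One caution about your proposed route through the final step. Averaging $\sigma(g)^*Q_U^2\sigma(g)$ over $g$ does produce a $\sigma$-intertwiner, hence a scalar on each isotypic block by Schur, but that scalar records only the \emph{average} of $\langle Q_U^2\sigma(g)v,\sigma(g)v\rangle$ over the orbit of $v$; it does not by itself furnish a $\sigma$-invariant subspace on which the unaveraged $Q_U^2$ is bounded below pointwise. So the pigeonholing you allude to cannot simply be ``pick the isotypic components where the Schur scalar is large''; some further argument is needed to pass from averaged control to an honest lower bound on a subrepresentation, and simultaneously for $Q_V$. You are right that this is where the real work in \cite{gowershatami15} lies, but your sketch of that step is more heuristic than the earlier parts of the proposal.
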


Note that $\sigma$ will \emph{not} normally be irreducible. This theorem tells us that $f$ correlates with the function $V\sigma U^*$. The extra strength of this theorem over what we remarked earlier is that the dimension of $\sigma$ is comparable to that of $f$. It turns out to be simple to deduce a converse statement -- i.e., that if $f$ correlates with a function of the above form, then $\|f\|_{U^2}^4\geq c'n$ for some suitable $c'$. Therefore, Theorem \ref{matrixinverse} is indeed an inverse theorem for the matrix $U^2$ norm.

It is possible to give a more careful argument when $c=1-\e$ for some small $\e$ that allows us to show that $(1-2\e)n\leq m\leq(1-4\e)^{-1}n$, and to obtain a lower bound of $(1-16\e)m$ in the last inequality. From this result it is not too hard to deduce a so-called \emph{stability theorem} for near representations. Roughly speaking, it states that any unitary-valued function that almost obeys the condition to be a representation is close to a unitary representation.

\begin{theorem}
Let $G$ be a finite group and let $f:G\to U(n)$ be a function such that $\|f(x)f(y)-f(xy)\|_{HS}\leq\e\sqrt n$ for every $x,y\in G$. Then there exist $m$ with $(1-\e^2)n\leq m\leq (1-2\e^2)^{-1}n$, an $n\times m$ partial unitary matrix $U$, and a unitary representation $\rho:G\to U(m)$, such that
\[\|f(x)-U\rho(x)U^*\|\leq 31\e\sqrt n\]
for every $x\in G$.
\end{theorem}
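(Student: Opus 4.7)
The plan is to reduce to the sharp form of Theorem \ref{matrixinverse}, using the hypothesis to show that the resulting approximant is both pointwise close to $f$ and structurally of the form $U\rho(x)U^*$.

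First I would extract two preparatory facts. Setting $x=y=e$ and using unitarity of $f(e)$ gives $\|f(e)-I\|_{HS}\leq\e\sqrt n$; applying the hypothesis at $(y,y^{-1})$ then yields $\|f(y^{-1})-f(y)^*\|_{HS}\leq 2\e\sqrt n$, so inverses may be freely replaced by adjoints with controlled error. To check that $f$ is almost extremal for the matrix $U^2$ norm, I would expand
\[\|f*f^*\|_2^2=\|f\|_2^2+2\Re\langle f,f*f^*-f\rangle+\|f*f^*-f\|_2^2,\]
recognise the middle term as $2(\Re\E_{x,y}\langle f(xy),f(x)f(y)\rangle-n)$, and apply the Pythagorean identity $\|f(x)f(y)-f(xy)\|_{HS}^2=2n-2\Re\langle f(x)f(y),f(xy)\rangle$ together with the hypothesis to conclude $\|f\|_{U^2}^4\geq(1-\e^2)n$.

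Applying the sharp form of Theorem \ref{matrixinverse} with $c=1-O(\e^2)$ now yields a dimension $m$ with $(1-\e^2)n\leq m\leq(1-2\e^2)^{-1}n$, an $m$-dimensional unitary representation $\sigma$, and $n\times m$ partial unitary matrices $U,V$ satisfying $\Re\E_x\langle f(x)U,V\sigma(x)\rangle\geq(1-C\e^2)m$ for some absolute $C$. Using $\|f(x)U\|_{HS}^2=\|V\sigma(x)\|_{HS}^2=m$ and the identity $\langle f(x),V\sigma(x)U^*\rangle=\langle f(x)U,V\sigma(x)\rangle$, this correlation bound rephrases as the average-HS estimate
\[\E_x\|f(x)-F(x)\|_{HS}^2\leq(n-m)+2C\e^2 m=O(\e^2 n),\qquad F(x):=V\sigma(x)U^*.\]

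The key step is an exact identity combined with a standard decomposition, used both to upgrade the average bound to a pointwise one and then to identify $V$ with $U$. Direct algebra gives $F(x)F(y)-F(xy)=V\sigma(x)(U^*V-I_m)\sigma(y)U^*$; since $V\sigma(x)$ and $\sigma(y)U^*$ are partial isometries, this yields $\|F(x)F(y)-F(xy)\|_{HS}=\|U^*V-I_m\|_{HS}$ for every $x,y$. Combining with the near-homomorphism of $f$ and Jensen's inequality applied to the average-HS bound pins down $\|U^*V-I_m\|_{HS}\leq O(\e\sqrt n)$. For any $y$ and any $x$, decomposing
\[f(y)-F(y)=\bigl[f(y)-f(x)f(x^{-1}y)\bigr]+\bigl[f(x)f(x^{-1}y)-F(x)F(x^{-1}y)\bigr]+\bigl[F(x)F(x^{-1}y)-F(y)\bigr],\]
taking HS norms and averaging over $x$ (where the first bracket contributes at most $\e\sqrt n$ by hypothesis and the second reduces via Jensen to the average-HS bound above) gives the pointwise bound $\|f(y)-F(y)\|_{HS}\leq O(\e\sqrt n)$. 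Specialising at $y=e$ and using $\|f(e)-I_n\|_{HS}\leq\e\sqrt n$ yields $\|VU^*-I_n\|_{HS}\leq O(\e\sqrt n)$; expanding this as $m-2\Re\tr(U^*V)+n\leq O(\e^2 n)$ rearranges to $\|V-U\|_{HS}^2=2m-2\Re\tr(U^*V)\leq(m-n)+O(\e^2 n)=O(\e^2 n)$. Setting $\rho:=\sigma$ we then have $\|F(x)-U\rho(x)U^*\|_{HS}=\|(V-U)\sigma(x)U^*\|_{HS}\leq\|V-U\|_{HS}\leq O(\e\sqrt n)$, whence $\|f(x)-U\rho(x)U^*\|_{HS}\leq O(\e\sqrt n)$ pointwise. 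The hardest pieces will be the verification of the key identity and the careful tracking of absolute constants at each stage so as to land on the explicit value $31$ in the conclusion.
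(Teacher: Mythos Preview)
Your proposal is correct and follows exactly the approach indicated in the paper, which does not give a detailed proof but merely says the stability theorem is ``not too hard to deduce'' from the sharpened version of Theorem~\ref{matrixinverse} at $c$ close to $1$; you have supplied precisely that deduction. One small caveat: identities such as $\|f(x)U\|_{HS}^2=m$ and $\|F(x)F(y)-F(xy)\|_{HS}=\|U^*V-I_m\|_{HS}$ hold as written only when $n\geq m$ (so that $U^*U=V^*V=I_m$), but since $m\leq(1-2\e^2)^{-1}n$ the case $n<m$ introduces discrepancies only of order $O(\e^2 n)$ and is easily absorbed.
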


When $\e$ is bounded above by $cn^{-1/2}$ for a suitable constant $c$, the inequality for $m$ forces $m$ to equal $n$. In this regime the result was known and is due to Grove, Karcher and Ruh \cite{grovekarcherruh74:1}. They also proved a stability result, this time with no restriction on $\e$, with the operator norm replacing the normalized Hilbert-Schmidt norm \cite{grovekarcherruh74:2} (see also \cite{kazhdan82}).

\section{Conclusion}

Now that we have seen several different generalizations of Fourier analysis (though not a complete list), we can draw up a checklist of the properties that a generalization is likely to need in order to be useful. Ideally we would have all of the following.
\begin{itemize}
\item A Parseval identity
\item A convolution identity
\item An inversion formula
\item A quasirandomness-measuring norm
\item An inverse theorem for the quasirandomness-measuring norm
\end{itemize}
Sometimes we can indeed get all of these, but in situations where we can't, it turns out that just having the last two properties is sufficient for some very interesting applications. In several of these situations, it remains a fascinating challenge to find new versions of the generalizations with improved properties.

\bibliographystyle{siam}
\bibliography{myreferences}

\end{document}